\documentclass[12pt,a4paper,reqno]{amsart} %xcolor=dvipsnames
\usepackage[utf8]{inputenc}

\usepackage{lmodern}

\usepackage{a4wide}
\usepackage{fancyhdr}
\usepackage[english]{babel}

\usepackage{amsmath, mathtools}
\usepackage{amsfonts}
\usepackage{amssymb}
\usepackage{amsthm}
%\usepackage{microtype}
%usepackage[left=2.5cm,right=2.5cm,top=2.5cm,bottom=2.5cm]{geometry}
\usepackage[dvipsnames]{xcolor}
\usepackage{tikz}
\usepackage{pgfplots}
\usetikzlibrary{positioning}
\usetikzlibrary{intersections}
\usetikzlibrary{calc}
\usepackage{csquotes}
\usepackage{mathabx}
\usepackage{bm}
\usepackage{todonotes}
\usepackage{hyperref}
\usepackage{cleveref}
\usepackage{bbm}

\theoremstyle{definition}

\numberwithin{equation}{section}

\title[Soap Film Bridge Subjected to an Electrostatic Force: The Balanced Case]{Stationary Soap Film Bridge Subjected to an Electrostatic Force: The Balanced Case}
\author{Lina Sophie Schmitz}
\date{September 27, 2024}
\address{Institut f\"ur Angewandte Mathematik, Leibniz Universit\"at Hannover, Welfengarten 1 \\ D-30167 Hannover, Germany}
\email{schmitz@ifam.uni-hannover.de}

\usepackage{graphicx}

\newtheoremstyle{common}
	{}    
	{}    
    {\itshape}% body font
    {0em}% indent
    {\bfseries}% head font
    {}% punct after head
    {.5em}% space
    {}% custom
\theoremstyle{common}

\numberwithin{subsection}{section}

\numberwithin{figure}{section}

\newtheorem{thm}{{\bf Theorem}}
\numberwithin{thm}{section}

\newtheorem{lem}[thm]{{\bf Lemma}}

\newtheorem{prop}[thm]{{\bf Proposition}}

\numberwithin{equation}{section}

\newtheoremstyle{commondef}  
    {6pt}% plus 5\p@ minus 2\p@}% above space (default)
    {6pt}% plus 5\p@ minus 2\p@}% below space
    {}% body font
    {0em}% indent
    {\bfseries}% head font
    {}% punct after head
    {.5em}% space
    {}% custom
\theoremstyle{commondef}

\newtheorem{bem}[thm]{{\bf Remark}}

\newtheorem{defin}[thm]{{\bf Definition}}

\renewenvironment{proof}{{\bf Proof}.}{\qed\\}

\begin{document}

\begin{abstract} 
We analytically study a stationary free boundary problem describing a soap film bridge subjected to an electrostatic force. Starting from a cylinder that the soap film forms if its surface tension and the electrostatic force are perfectly balanced, we construct a local branch of stationary solutions. Then, we prove that there is a sharp threshold value for a characteristic model parameter at which the stability behaviour of this branch switches from stable to unstable. Finally, we show that the soap film deflects monotonically outwards if the strength of the electrostatic force is increased. Besides rigorous results, we also discuss a possible balancing effect of the electrostatic force. 

\end{abstract}

\keywords{free boundary problem, qualitative properties, stability, surface tension, electrostatics}
\subjclass[2020]{35R35, 42A32, 35B35, 47J07, 35Q99}
\maketitle

\allowdisplaybreaks

%\tableofcontents

\section{Introduction}
%\medskip 
A classic example for the relation between physical and mathematical objects is given by that of soap films and minimal surfaces \cite{Courant40}. If a soap film is spanned between two parallel metal rings with a small gap, its surface tension forces it to take the shape of a catenoid, which is a rotationally symmetric minimal surface. In contrast, for a large gap between the rings, such a surface does not exist, which manifests as a pinch-off (i.e.\ breaking) of the soap film bridge, see \cite{DK91,GPRS21}. Applying an additional force to the soap film may influence the occurrence of a pinch-off and also changes the shape of the film. Since soap responds to electrostatics \cite{MCD21}, the choice of an additional electrostatic force is possible, which is also particularly interesting due to its technological relevance \cite{Pelesko03}. A mathematical model for a soap film bridge subjected to an additional electrostatic force which counteracts the surface tension has been introduced in \cite{Moulton08,MP08, MP09} and consists of a singular ordinary differential equation (ODE).
In the present paper, we analytically study a stationary version of a new mathematical model \cite{LSS24a} describing the same physical set-up but derived under more general modelling assumptions. In particular, we deal with a free boundary problem instead of a singular ODE. In our investigation, we focus on the case where surface tension and electrostatics are nearly balanced and study how small changes of the electrostatic force affect the soap film bridge. 

%As we refrain from a certain small aspect ratio assumption, which has been crucial in the derivation of the earlier ODE model, 

%The influence of different forces on the soap film bridge or related set-ups has been examined in \cite{MP08,KW95,JPPP21} for example. 

 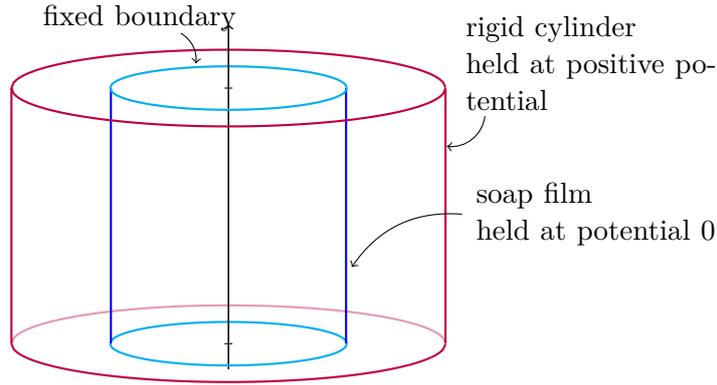
\begin{figure}[h]
\begin{tikzpicture}
\begin{axis}[samples=200, axis x line=none, axis y line=none, domain=-1:1, clip=false]
\addplot[color=cyan, thick]({0.6*x},{-1+0.17*(1-x^2)^0.5});
\addplot[color=cyan, thick]({0.6*x},{1+0.17*(1-x^2)^0.5});
\addplot[color=purple, thick]({1.1*x},{1+0.3*(1-x^2)^0.5});
\addplot[color=purple!40, thick]({1.1*x},{-1+0.3*(1-x^2)^0.5});
\addplot[->, domain=-1.2:1.5]({0},{x});
\addplot[domain=-0.02:0.02]({x},{-1});
\addplot[domain=-0.02:0.02]({x},{1});

\addplot[color=purple, thick]({-1.1},{x});
\addplot[color=purple, thick]({1.1},{x});
\addplot[->, domain=-1.2:1.5]({0},{x});

\addplot[color=blue, thick]({0.596},{x});
\addplot[color=blue, thick]({-0.596},{x});

\addplot[color=cyan,thick]({0.6*x},{(1-0.17*(1-x^2)^0.5)});
\addplot[color=cyan, thick]({0.6*x},{-1-0.17*(1-x^2)^0.5});

\addplot[color=purple,thick]({1.1*x},{(1-0.3*(1-x^2)^0.5)});
\addplot[color=purple, thick]({1.1*x},{-1-0.3*(1-x^2)^0.5});

\end{axis}

%Beschriftung des äußeren Zylinders:
\draw [->, bend angle=45, bend left]  (6.8,4) to (6.3,3.6);
\node[text width=3.5cm] at (8.3,4.7) {\begin{small}rigid cylinder \\ held at positive potential \end{small}};

%Beschriftung des soap film's/innerer Membran:
\draw [->, bend angle=30, bend right]  (6.5,2.7) to (5.05,2);
\node[text width=3.5cm] at (8.3,2.7) {\begin{small} soap film \\ \ held at potential $0$ \end{small}};

%Beschriftung des fixierten Randteils:
\draw [->, bend angle=30, bend left]  (2.8,5.1) to (3,4.7);
\node[text width=3.5cm] at (2.6,5.3) {\begin{small} fixed boundary \end{small}};
\end{tikzpicture}

\caption{Depiction of the soap film (dark blue), which is spanned between two parallel metal rings (light blue) and placed inside an outer metal cylinder (red). 
The electrostatic force is realized by applying a voltage. If the surface tension of the film and the electrostatic force are equally strong, the soap film bridge takes the shape of a cylinder.\label{fig1}
}
\end{figure}

\subsection{The Mathematical Model}
The precise set-up is given by a tiny soap film spanned between two metal rings and placed inside an outer metal cylinder, see Figure \ref{fig1}. Applying a voltage between this metal cylinder and the rings induces an electrostatic force pulling the film outwards. Throughout, we assume rotational symmetry, which allows us to consider the cross section, see Figure \ref{Cross Section}. More precisely, we describe the set-up in dimensionless form by a function $u=u(z): (-1,1) \rightarrow (-1,1)$ with $u(\pm1)=0$ such that $u+1$ gives the profile of the soap film bridge, and by the electrostatic potential $\psi=\psi(z,r): \overline{\Omega(u)} \rightarrow \mathbb{R}$ defined on the closure of the a-priori unknown domain between soap film and cylinder
$$\Omega(u) = \big \lbrace (z,r) \in (-1,1) \times (0,2) \, \vert \, u(z)+1 < r < 2 \, \big \rbrace.$$ 
In the stationary case, $u$ solves the elliptic equation
 \begin{align}
 \begin{cases}
  \qquad  -\sigma\, \partial_z \mathrm{arctan}  ( \sigma\partial_z u )&=  -\displaystyle\frac{1}{u+1} +\lambda\,(1+\sigma^2(\partial_z u)^2)^{3/2} \vert \partial_r \psi(z,u+1) \vert^2\,, \\
  \qquad \qquad \qquad u(\pm 1 )&=0 \,, \qquad -1 < u <1\,, 
 \end{cases} \label{eq1}
\end{align}
and the electrostatic potential $\psi$ is a solution to 
\begin{align}
\begin{cases}\displaystyle
\frac{1}{r} \partial_r \left ( r \partial_r \psi \right ) + \sigma^2 \partial^2_z \psi&= 0  \quad \ \text{in} \quad \Omega(u)\,,  \\
 \ \, \qquad \qquad \qquad \qquad \psi &= h_u \quad \text{on} \quad \partial \Omega(u) 
\end{cases} \label{eq2}
\end{align}
with
\begin{align}
h_u(z,r) = \frac{\ln \Big (\displaystyle\frac{r}{u(z)+1} \Big )}{\ln \Big (\displaystyle\frac{2}{u(z)+1} \Big )}\,. \label{eq3}
\end{align} 
The boundary condition \eqref{eq3}, which is $0$ on the film and $1$ on the cylinder, is due to a neglection of the fringing field as it is also common in other models for the interplay between surface tension and electrostatics \cite{ELW14, ELW15, LW17}. The parameter $\sigma$ gives the ratio of radii of the rings divided by their distance, a small value of $\sigma$ therefore indicating a larger distance between the rings, and $\lambda \in [0,\infty)$ measures the strength of the applied voltage. If $\lambda=0$, no voltage is applied, and \eqref{eq1} becomes a minimal surface equation which has a solution if and only if $\sigma \geq \sigma_{crit}$ with 
\begin{align}
\sigma_{crit} \approx 1.5\,,  \label{sigmacrit}
\end{align}
see \cite[p.\,282]{JLJ98}. We refer to \cite{LSS24} for the derivation of \eqref{eq1}-\eqref{eq3}. For later purposes, we also note that for the dynamical version of \eqref{eq1}-\eqref{eq3} in \cite{LSS24a} only the first line of \eqref{eq1} has to be replaced by
\begin{align}
\partial_t u - \sigma \partial_z \mathrm{arctan} (\sigma \partial_z u) = - \frac{1}{u+1} + \lambda \,(1+\sigma^2(\partial_z u)^2)^{3/2} \vert \partial_r \psi(z,u+1) \vert^2\,, \label{dyn}
\end{align} 
while in \eqref{eq2}-\eqref{eq3} time only occurs as a parameter. Moreover, an initial value $u_0$ is required. 
%In \cite[Theorem 1.1]{LSS24a}, local well-posedness for the dynamical version of \eqref{eq1}-\eqref{eq3}  for 
%$$u \in C^1 \big ([ 0 ,T), L_q(-1,1) \big ) \cap C\big ( [0, T) , W^2_{q,D}(-1,1) \big )$$ 
%and some $T>0$ is shown.

\subsection{Related Work}\label{CRW} 
First, as already mentioned, the model from \cite{Moulton08,MP08, MP09} describes the same physical set-up as the free boundary problem \eqref{eq1}-\eqref{eq3} herein but consists of a singular ODE which is derived by assuming a small aspect ratio for the radii difference between the rings and the cylinder divided by the distance of the rings, which is then formally put equal to zero. Besides modelling, in \cite{Moulton08}, the qualitative behaviour of solutions is studied by numerical and formal methods. Our reason for dropping the small ratio assumption is that there are other model parameters approximately of the same order as the aspect ratio. Second, we mention the model \cite{ELW15}. It belongs to a class of free boundary problems modelling small micro devices, which were introduced in \cite{LW13,ELW14}, see the survey article \cite{LW17}. The model in \cite{ELW15} is a free boundary problem for two unknowns $(u,\psi)$ consisting (in the stationary case) of an elliptic equation for $u$ and Laplace equation for $\psi$ in an a-priori unknown domain. Insofar, \eqref{eq1}-\eqref{eq3} has the same structure. However, the source term in the equation for $u$ in \cite{ELW15} differs significantly from that in \eqref{eq1} and reads
\begin{align*}
 - \lambda \,\big (1+(\sigma \partial_z u)^2 \big )\big \vert \partial_r \psi (\,x \,, u) \big \vert^2\,, 
\end{align*}
which has -- in contrast to the right hand side of \eqref{eq1} -- a fixed sign.  In \cite{ELW15}, among other things, existence and stability of stationary solutions is proven. However, the opposite signs in the source term of \eqref{eq1} yield a significantly different set of stationary solutions compared to \cite{ELW15}, and therefore proofs of qualitative properties of solutions to \eqref{eq1}-\eqref{eq3} require adaptation as well as new ingredients. Another question which arises is the direction in which $u$ deflects. A characterization of the direction of deflection for an evolution problem, in which the source term may consist of terms of opposite signs, is contained in \cite{EL17, L16} while the direction of deflection for stationary problems of $4$-th order, but with fixed sign of the source term, is treated in \cite{LW14a,Nik21b}. Finally, we mention that the present paper is supplemented by the author's work \cite{LSS24a, LSS24b}. In \cite{LSS24a} well-posedness and non-existence of global solutions for large $\lambda$ to the dynamical version of \eqref{eq1}-\eqref{eq3}, see \eqref{dyn}, is shown while \cite{LSS24b} focuses on \eqref{eq1}-\eqref{eq3} but for $\lambda$ taking values in a different parameter range.

\subsection{Cylinder as Stationary Solution} 
The starting point for our investigation is the stationary solution $u=0$, i.e.\ a cylinder, which occurs if surface tension and electrostatic force are perfectly balanced (at least if the fringing field is neglected). More precisely, for arbitrary $\sigma$, a direct computation shows that $u=0$ is a solution to \eqref{eq1} if $\lambda = \lambda_{cyl}$ with
\begin{align}
\lambda_{cyl} := \ln(2)^2\,. \label{eqCyl2}
\end{align}
The corresponding electrostatic potential solving \eqref{eq2}-\eqref{eq3} is then
\begin{align}
\psi(z,r)= \ln(r)/\ln(2)\,. \label{eqCyl1b}
\end{align}

\begin{figure}[h]
\begin{tikzpicture}
\fill[white,rounded corners] (-3.5,-0.8) rectangle (3.3,3.5);
\fill[purple!7] (-2.45,1.54) rectangle (2.45,3);
%\draw[domain=-2.45:2.45, smooth, variable=\z, purple!7, fill] plot ({\z},{cosh(0.5*\z)-0.3});
\draw[domain=-2.45:2.45, smooth, variable=\z, blue, thick] plot ({\z},{1.5});
\draw[purple, thick] (-2.45,3) -- (2.45,3) ;
\draw[dashed](2.45,0) -- (2.45,3) ;
\draw[dashed] (-2.45,3) -- (-2.45,0) ;
\draw[dashed] (-2.45,3) -- (-2.45,0) ;
\draw[dashed] (-2.45,3) -- (-2.45,0) ;
\draw[->] (-3,0) -- (3,0) node[right] {$z$};
\draw (2.45,0.1) -- (2.45,-0.1) node[below] {$1$};
\draw (-2.45,0.1) -- (-2.45,-0.1) node[below] {$-1$};
\draw[->] (-2.9,-0.1) -- (-2.9,3.2) node[right] {$r$};
\draw (-2.8,1.5) -- (-3,1.5) node[left] {$1$};
\draw (-2.8,3) -- (-3,3) node[left] {$2$};
\node[above] at (0,1.9) {$\Omega(u)$};

\draw[->, blue, thick] (-1.4,0) -- (-1.4,1.5);
\node[blue, right] at (-1.3,0.7) {$u+1$};
\draw [<-, bend angle=45, bend right]  (0.8,3) to (1,2.6) node[right]{$\psi=1$};
\draw [<-, bend angle=45, bend left]  (0.8,1.55) to (1,1.85) node[right]{$\psi=0$};

\end{tikzpicture}
\caption[Cross Section Soap Film Bridge in an Electric Field]{Cross section of the soap film bridge in an electric field. If the soap film bridge does not form a cylinder, then $u \neq 0$. }\label{Cross Section}
\end{figure}
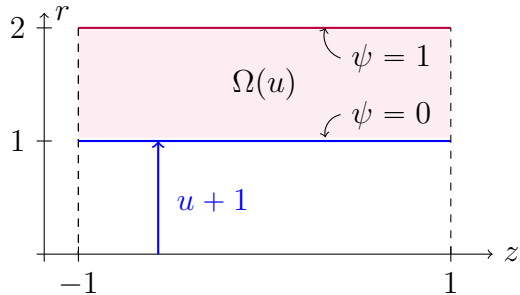\color{black}

\subsection{Main Results and Strategies}

We present results concerning existence, stability and direction of deflection for stationary solutions close to the cylinder in dependence on varying voltages $\lambda$. In all statements, the so-called eigencurve profile $[s \mapsto \mu(s)]$ will occur, which has a unique zero and can be written as
\begin{align*}
 \mu(s) = -s +3+ 2\, \partial_r h_s(1)\,, \qquad s \in (0,\infty)
\end{align*}
where $h_s \in W^2_{2,D}(-1,1)$ solves 
\begin{align*}
 \begin{cases}
&\begin{displaystyle} -\frac{1}{r} \partial_r \big ( r \,\partial_r h_s \big )  +s \, h_s = \frac{-2}{r^3} + s \, \frac{2-r}{r}\,, \end{displaystyle}\\
  &h_s(1)=h_s(2)=0\,,
 \end{cases}
\end{align*}
see Section \ref{SectionCylinder}. With $[s \mapsto \mu(s)]$ at hand, we can state the first result which focuses on existence of stationary solutions for $\lambda$ close to $\lambda_{cyl}$. 

\begin{thm}\label{exstatsLcyl}{\bf(Existence)}\\
Let $q \in (2,\infty)$, and $s_0 >0$ be the unique zero of the eigencurve profile $[s \mapsto \mu(s)]$.
Then, for each $\sigma>0$ with
$$\sigma^2 \neq \frac{4 \,s_0}{\pi^2 \,(j+1)^2}\,, \qquad j\in \mathbb{N}\,,$$
there exists $\delta =\delta(\sigma)> 0$ and an analytic function 
\begin{align*}
[\lambda \mapsto u^{\lambda}_{cyl}] &: (\lambda_{cyl}-\delta,\lambda_{cyl}+\delta) \rightarrow W^2_{q,D}(-1,1) \,, \ \qquad u_{cyl}^{\lambda_{cyl}}=0
\end{align*}
such that $u^\lambda_{cyl}$ is a solution to \eqref{eq1}-\eqref{eq3} for each $\lambda \in (\lambda_{cyl}-\delta,\lambda_{cyl}+\delta)$. Moreover, $u^\lambda_{cyl}$ as well as the corresponding electrostatic
potential $\psi_{u^\lambda_{cyl}} \in W^2_2\big (\Omega(u^\lambda_{cyl})\big)$ are symmetric with respect to the $r$-axis.
\end{thm}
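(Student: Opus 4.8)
The plan is to construct the branch of stationary solutions by the analytic implicit function theorem, treating $u = 0$ at $\lambda = \lambda_{cyl}$ as the base point. First I would reformulate the coupled system \eqref{eq1}-\eqref{eq3} as a single equation $F(\lambda, u) = 0$ for $u$ alone. This requires solving the potential equation \eqref{eq2}-\eqref{eq3} for $\psi = \psi_u$ on the $u$-dependent domain $\Omega(u)$ in terms of $u$; the standard device is to pull $\Omega(u)$ back to the fixed reference cylinder $(-1,1) \times (1,2)$ by the diffeomorphism $(z,r) \mapsto (z, (r - u(z) - 1)/(1 - u(z)) \cdot 1 + \ldots)$, or more conveniently a map straightening the free boundary, so that $\psi_u$ solves a uniformly elliptic problem with $u$-dependent coefficients on a fixed domain. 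Elliptic regularity then gives $\psi_u \in W^2_2$ depending analytically on $u \in W^2_{q,D}(-1,1)$ (here $q > 2$ is needed so that $W^2_{q,D} \hookrightarrow C^1$ and the trace $\partial_r \psi_u(z, u(z)+1)$ makes sense and depends analytically on $u$). Substituting this back, the right-hand side of \eqref{eq1} becomes an analytic Nemytskii-type operator in $u$, and $F: (\lambda, u) \mapsto \sigma \partial_z \arctan(\sigma \partial_z u) - 1/(u+1) + \lambda (1 + \sigma^2 (\partial_z u)^2)^{3/2} |\partial_r \psi_u(\cdot, u+1)|^2$ is a well-defined analytic map from $\mathbb{R} \times W^2_{q,D}$ into $L_q$ (or $W^0_{q}$) with $F(\lambda_{cyl}, 0) = 0$ by the computation around \eqref{eqCyl2}-\eqref{eqCyl1b}.

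The decisive step is to show that the Fréchet derivative $D_u F(\lambda_{cyl}, 0)$ is an isomorphism from $W^2_{q,D}(-1,1)$ onto $L_q(-1,1)$. Linearizing around $u = 0$, $\psi_0(r) = \ln r / \ln 2$, the curvature term contributes $-\sigma^2 \partial_z^2$, the $-1/(u+1)$ term contributes $+1$ (from differentiating $1/(u+1)$ at $u=0$), and the electrostatic term contributes $\lambda_{cyl}$ times the derivative of $(1 + \sigma^2(\partial_z u)^2)^{3/2}$ (which vanishes at $u=0$ since $\partial_z u = 0$) plus $\lambda_{cyl}$ times $2 \partial_r \psi_0(1) \cdot \partial_r(\text{linearized }\psi)(1)$. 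The linearized potential contribution is where the auxiliary function $h_s$ from the eigencurve definition enters: differentiating the potential problem in the direction of a perturbation $v$ produces an inhomogeneous elliptic problem on the cylinder whose solution, after separating variables (expanding $v$ in the eigenbasis of $-\partial_z^2$ on $(-1,1)$ with Dirichlet conditions, i.e.\ $\sin$ and $\cos$ modes with eigenvalues $\pi^2(j+1)^2/4$), reduces mode-by-mode to exactly the ODE defining $h_s$ with $s = \sigma^2 \cdot (\text{eigenvalue})$. Consequently $D_u F(\lambda_{cyl},0)$ is diagonalized by the same eigenbasis, and on the $j$-th mode it acts as multiplication by $\sigma^2 \pi^2(j+1)^2/4 + 1 + 2\lambda_{cyl}\ln(2)^{-1} \cdot (\text{something involving } \partial_r h_{s_j}(1))$; collecting constants and using $\lambda_{cyl} = \ln(2)^2$ this eigenvalue is (a positive multiple of) $-\mu(s_j) \cdot (\text{nonzero factor})$ — more precisely the eigenvalue vanishes precisely when $s_j = \sigma^2\pi^2(j+1)^2/4$ equals the zero $s_0$ of $\mu$. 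The hypothesis $\sigma^2 \neq 4 s_0 / (\pi^2(j+1)^2)$ for all $j$ thus guarantees every eigenvalue is nonzero, and since they grow like $\sigma^2 j^2$ the operator is a bijection $W^2_{q,D} \to L_q$ with bounded inverse (bijectivity on the Hilbert scale plus the standard elliptic bootstrap transfers to $L_q$; here one uses that $-\sigma^2\partial_z^2 + 1$ is the principal part, an isomorphism, and the rest is a relatively compact perturbation which is injective, hence by Fredholm also surjective).

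Given the isomorphism, the analytic implicit function theorem yields $\delta = \delta(\sigma) > 0$ and a unique analytic branch $[\lambda \mapsto u^\lambda_{cyl}] : (\lambda_{cyl} - \delta, \lambda_{cyl} + \delta) \to W^2_{q,D}(-1,1)$ with $u^{\lambda_{cyl}}_{cyl} = 0$ and $F(\lambda, u^\lambda_{cyl}) = 0$, which together with the (analytic) reconstruction $\psi_{u^\lambda_{cyl}}$ from the straightened potential problem gives a genuine solution of \eqref{eq1}-\eqref{eq3}; the $W^2_2$-regularity of $\psi$ on $\Omega(u^\lambda_{cyl})$ follows from elliptic regularity on the Lipschitz domain once $u^\lambda_{cyl} \in W^2_{q,D} \hookrightarrow C^1$. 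For the symmetry claim, I would work in the subspace $W^2_{q,D,\mathrm{even}}(-1,1)$ of functions even in $z$: the problem is equivariant under $z \mapsto -z$ (the operators $\partial_z\arctan(\sigma\partial_z\cdot)$, $1/(u+1)$, and — because $\Omega(u)$ and the boundary data are reflection-symmetric when $u$ is even — the electrostatic term all map even functions to even functions), the base point $0$ is even, and $D_u F(\lambda_{cyl},0)$ restricted to even functions is still an isomorphism onto $L_{q,\mathrm{even}}$ (it is diagonal in the same basis, one simply keeps the $\cos$ modes). Applying the implicit function theorem within this invariant subspace produces an even branch, and by the uniqueness part of the theorem it coincides with the branch constructed above, so $u^\lambda_{cyl}$ is even and hence $\psi_{u^\lambda_{cyl}}$ is symmetric in $z$ as well. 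The main obstacle is the careful bookkeeping in the linearization of the free-boundary potential term — verifying that the variation of $\psi_u$ in $u$, transported to the fixed domain, really reduces mode-by-mode to the ODE for $h_s$ with the stated coupling $s = \sigma^2 \pi^2(j+1)^2/4$, and that the resulting eigenvalues of $D_u F(\lambda_{cyl},0)$ are, up to a fixed nonzero factor, the values $\mu(s_j)$ — everything else is a routine application of analytic elliptic theory and the implicit function theorem.
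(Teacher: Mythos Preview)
Your proposal is correct and follows essentially the same route as the paper: rewrite \eqref{eq1}--\eqref{eq3} as $F(u)+\lambda g(u)=0$ via a fixed-domain transformation (Proposition~\ref{propana}), diagonalize the linearization $DF(0)+\lambda_{cyl}Dg(0)$ in the Dirichlet--Laplacian eigenbasis so that its eigenvalues are exactly $\mu(\sigma^2\nu_j)$ (Lemmas~\ref{Cyl2}--\ref{Cyl3}), use the hypothesis to exclude $0$ from the spectrum, and apply the analytic implicit function theorem. Two small corrections: with your sign convention the curvature term linearizes to $+\sigma^2\partial_z^2$ (not $-\sigma^2\partial_z^2$), and the eigenvalues are \emph{equal} to $\mu(\sigma^2\nu_j)$ rather than a nonzero multiple thereof; for the symmetry, the paper argues directly that $z\mapsto u^\lambda_{cyl}(-z)$ is a second solution within the IFT-uniqueness neighbourhood, which is equivalent to your invariant-subspace argument.
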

 
Note that functions in $W^2_{q,D}(-1,1)$ satisfy Dirichlet boundary conditions. To prove Theorem \ref{exstatsLcyl}, we rewrite \eqref{eq1}-\eqref{eq3} as a non-local elliptic equation for $u$ only
 \begin{align}
 \begin{cases}
  &F(u)+ \lambda \, g(u)=0\,,\\
  &u(\pm1)=0 \,, \quad -1 <u < 1
 \end{cases}\label{statlambda1a}
 \end{align}
with non-local electrostatic force 
\begin{align}
  g(u):= (1+\sigma^2(\partial_z u)^2)^{3/2}\, \big \vert \partial_r \psi_u (z,u+1) \big \vert^2 \label{filmdimensionless0} 
\end{align}
 and
\begin{align}
 F(u):=\sigma \partial_z \mathrm{arctan} (\sigma \partial_z u)- \displaystyle\frac{1}{u+1}\,. \label{CatFa}
\end{align}
In \eqref{filmdimensionless0}, the notion $\psi_u$ highlights that we consider \eqref{eq2} in dependence on $u$. 
Theorem \ref{exstatsLcyl} follows from an application of the implicit function theorem which requires the linearization of problem \eqref{statlambda1a} around $(\lambda,u)=(\lambda_{cyl},0)$. The non-local character of $g$ makes the exact computation of the spectrum of the linearized operator $DF(0)+\lambda_{cyl} Dg(0)$ difficult. Instead, we derive qualitative properties of it using a  Fourier series ansatz \cite{AGM15, EM11b, EM11a, LS13}. For our specific problem, this yields to the investigation of the eigencurve profile $[s \mapsto \mu(s)]$.   \\
 
Second, we rigorously study stability of stationary solutions near the cylinder under rotationally symmetric perturbations. We find a sharp threshold value $\sigma_{cyl} > 0$ such that the stationary solution $u^\lambda_{cyl}$ from Theorem \ref{exstatsLcyl} is unstable for $\sigma< \sigma_{cyl}$ and stable for $\sigma>\sigma_{cyl}$. 
\color{black}

\begin{thm}\label{stability2}{\bf(Stability)}\\
 Let $q \in (2,\infty)$ and $\sigma^2 \neq \displaystyle\frac{4s_0}{\pi^2(j+1)^2}$ for $j \in \mathbb{N}$ and $s_0$ being the unique zero of the eigencurve profile $[s \mapsto \mu(s)]$. Define \begin{align}
  \sigma_{cyl}:= \sqrt{\frac{4s_0}{\pi^2}}\,, \label{thv}
 \end{align}
 Then, there exists $\delta >0$ such that for each $\lambda \in (\lambda_{cyl}-\delta,\lambda_{cyl}+\delta)$ the stationary solution $u_{cyl}^\lambda$ satisfies: \\
 {\bf(i)} If $\sigma < \sigma_{cyl}$, then $u_{cyl}^\lambda$ is unstable in $W^2_{q,D}(-1,1)$.\\
 {\bf(ii)} If $\sigma > \sigma_{cyl}$, then $u_{cyl}^\lambda$ is exponentially asymptotically stable in $W^2_{q,D}(-1,1)$. More precisely, 
 there exist numbers $\omega_0, m, M > 0$ such that for each initial value $u_0 \in W^2_{q,D}(-1,1)$ with $\Vert u_0 -u_{cyl}^\lambda \Vert_{W^2_{q,D}} < m$, the solution $u$ to the dynamical version of \eqref{eq1}-\eqref{eq3}, see \eqref{dyn}, exists globally in time and the estimate
 \begin{align*}
  \Vert u(t) -u_{cyl}^\lambda\Vert_{W^2_{q,D}(-1,1)} &+ \Vert \partial_t u(t) \Vert_{L_q(-1,1)} \leq M \, e^{-\omega_0 t} \Vert u_0  -u_{cyl}^\lambda \Vert_{W^2_{q,D}(-1,1)} 
 \end{align*}
 holds for $t \geq 0$.
\end{thm}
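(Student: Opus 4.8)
The plan is to apply the principle of linearized stability, and its instability counterpart, for quasilinear parabolic equations. First I would recast the dynamical problem \eqref{dyn}, \eqref{eq2}--\eqref{eq3} in the non-local form $\partial_t u = F(u) + \lambda\,g(u)$ with $F$, $g$ as in \eqref{CatFa}, \eqref{filmdimensionless0}, that is, as an abstract quasilinear Cauchy problem on $X_0 := L_q(-1,1)$ with regularity space $X_1 := W^2_{q,D}(-1,1)$; well-posedness and the parabolic structure are provided by \cite{LSS24a}. The equilibrium in question is $u^\lambda_{cyl}$, and the operator governing its stability is the Fréchet derivative of the right-hand side, $L_\lambda := DF(u^\lambda_{cyl}) + \lambda\, Dg(u^\lambda_{cyl}) \in \mathcal{L}(X_1,X_0)$. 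Since $DF(u)$ is a uniformly elliptic second-order operator (its principal part being $\sigma^2(1+\sigma^2(\partial_z u)^2)^{-1}\partial_z^2$) and $Dg(u)$ is a non-local perturbation of order zero, hence relatively compact, $L_\lambda$ generates an analytic semigroup on $X_0$, has compact resolvent, and therefore purely discrete spectrum. By the principle of linearized stability, $u^\lambda_{cyl}$ is then exponentially asymptotically stable in $X_1$ with the asserted estimate as soon as $\sup\operatorname{Re}\sigma(L_\lambda) < 0$ (the bound on $\Vert \partial_t u\Vert_{L_q}$ following by feeding the decay of $u(t)-u^\lambda_{cyl}$ in $W^2_{q,D}$ back into the equation, using $F(u^\lambda_{cyl})+\lambda g(u^\lambda_{cyl})=0$); and $u^\lambda_{cyl}$ is unstable whenever $\sigma(L_\lambda)$ meets the open right half-plane in an isolated set with finite-dimensional spectral projection.

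It therefore remains to locate $\sigma(L_\lambda)$, and by analyticity of $\lambda \mapsto u^\lambda_{cyl}$ (Theorem \ref{exstatsLcyl}) together with smoothness of $u \mapsto (DF(u),Dg(u))$, it suffices to do this at the base point $\lambda = \lambda_{cyl}$, $u = 0$, and then perturb. There $L_0 = DF(0)+\lambda_{cyl}Dg(0) = \sigma^2\partial_z^2 + \mathrm{id} + \lambda_{cyl}\,Dg(0)$, and here I would use the Fourier series ansatz of Section \ref{SectionCylinder}: in the Dirichlet eigenbasis $\{e_j\}_{j\in\mathbb{N}}$ of $-\partial_z^2$ on $(-1,1)$, with $-\partial_z^2 e_j = \tfrac14\pi^2(j+1)^2 e_j$, the auxiliary Laplace problem \eqref{eq2}--\eqref{eq3} decouples mode by mode into the radial ODE defining $h_s$ at $s = \tau_j := \tfrac14\sigma^2\pi^2(j+1)^2$ (the Dirichlet condition on $u$ and on $e_j$ making the side-boundary contributions vanish at linear order), so that $L_0 e_j = \mu(\tau_j)\, e_j$ with $\mu$ the eigencurve profile; hence $\sigma(L_0) = \{\mu(\tau_j) : j \in \mathbb{N}\}$. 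By the properties of $\mu$ established in Section \ref{SectionCylinder} ($\mu$ continuous, positive for small $s$, tending to $-\infty$, with $s_0$ its only zero) one gets $\mu(s) > 0 \iff s < s_0$; the hypothesis $\sigma^2 \ne 4s_0/(\pi^2(j+1)^2)$ forces $\tau_j \ne s_0$ for all $j$, so $0 \notin \sigma(L_0)$. Since $\tau_0 < \tau_1 < \cdots$, the largest spectral value $\mu(\tau_0)$ is positive precisely when $\tau_0 = \tfrac14\sigma^2\pi^2 < s_0$, i.e.\ precisely when $\sigma < \sigma_{cyl}$ with $\sigma_{cyl}$ as in \eqref{thv}; and if $\sigma > \sigma_{cyl}$ then $\tau_j \ge \tau_0 > s_0$ for every $j$, so $\mu(\tau_j) < 0$ for all $j$ and $\mu(\tau_j) \to -\infty$, giving $\sup\operatorname{Re}\sigma(L_0) < 0$.

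Finally I would transfer these two alternatives to $\lambda$ close to $\lambda_{cyl}$ by a perturbation argument. For $\sigma > \sigma_{cyl}$ one combines upper semicontinuity of the spectral bound with uniform sectoriality of $L_\lambda$ (which keeps the high-frequency part of $\sigma(L_\lambda)$ uniformly to the left) to obtain $\sup\operatorname{Re}\sigma(L_\lambda) < -\omega_0 < 0$ for $|\lambda - \lambda_{cyl}| < \delta$, and concludes stability; for $\sigma < \sigma_{cyl}$ the isolated positive eigenvalue $\mu(\tau_0)$ of $L_0$, having a finite-dimensional spectral projection, persists for $|\lambda - \lambda_{cyl}| < \delta$ and supplies a point of $\sigma(L_\lambda)$ with positive real part, whence instability. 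Symmetry of $u^\lambda_{cyl}$ and of $\psi_{u^\lambda_{cyl}}$ is inherited from Theorem \ref{exstatsLcyl}. I expect the main obstacle to lie in the analytic and spectral bookkeeping around the non-local term $Dg$: one must verify with care that $L_\lambda$ is genuinely sectorial on $X_0$ and that $Dg$ is a relatively compact, lower-order perturbation, so that the spectrum stays discrete and is fully captured by the Fourier computation; and, in the unstable regime, one must invoke an instability theorem valid for quasilinear — not merely semilinear — parabolic equations, exploiting the spectral gap at $0$ to upgrade linear instability to genuine nonlinear instability in $W^2_{q,D}(-1,1)$.
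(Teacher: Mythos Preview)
Your proposal is correct and follows essentially the same route as the paper: linearized stability at the base point $(\lambda_{cyl},0)$ via the Fourier/eigencurve-profile computation of the spectrum of $DF(0)+\lambda_{cyl}Dg(0)$, followed by a perturbation argument (semigroup generation, persistence of spectral bound/isolated eigenvalue) to transfer the conclusion to $u^\lambda_{cyl}$ for nearby $\lambda$. One small caveat on your justification of compactness: $Dg(0)$ is \emph{not} of order zero in $v$ --- the formula in Lemma~\ref{Cyl3a} contains $v_{zz}$ inside the inverse Laplacian --- so the ``hence relatively compact'' is not for the reason you state; compactness instead comes from the smoothing of $(-\Delta_{cyl,D})^{-1}$ followed by $\partial_r$ and the trace, as made precise in Proposition~\ref{Cyl1}. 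The paper also explicitly checks algebraic simplicity of the top eigenvalue (Lemma~\ref{stability2aa}) before invoking eigenvalue perturbation, which is the sharp form of your ``finite-dimensional spectral projection persists'' step.
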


Here, we use the notion of unstable from \cite{Lunardi95}. The proof of Theorem \ref{stability2} is based on the principle of linearized stability. We point out that part (ii) contains an alternative argument for existence of solutions compared to \cite[Theorem 1.1]{LSS24a}, at least for $u_0$ close to $u^\lambda_{cyl}$ in the $W^2_q$-norm. Concerning the first two results, we note that the implicit function theorem and principle of linearized stability have been applied in \cite{ELW15}, too. However, as $\lambda_{cyl} > 0$ in our case, the implementation follows a different and more complicated route. \\

As a third result, for the stable range $\sigma > \sigma_{cyl}$, we show that the stationary solutions $u^\lambda_{cyl}$, stemming from the cylinder $u=0$, are deflected monotonically outwards with respect to $\lambda$: 

\begin{thm}\label{cyldirectionofdeflection}{\bf(Direction of Deflection)}\\
For $\sigma > \sigma_{cyl}$, there exists $\delta >0$ such that  
\begin{align*}
u^{\overline{\lambda}}_{cyl} (z) < u^\lambda_{cyl}(z) \,, \qquad  \lambda_{cyl}-\delta < \overline{\lambda} < \lambda < \lambda_{cyl}+\delta\,,\quad z \in (-1,1)\,.\\
\end{align*}
\end{thm}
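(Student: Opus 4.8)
The plan is to show that the branch is strictly increasing in $\lambda$ by establishing $\partial_\lambda u^\lambda_{cyl}(z) > 0$ for every $z \in (-1,1)$ and then integrating. Writing $\dot u := \partial_\lambda u^\lambda_{cyl}$ and differentiating the non-local equation $F(u^\lambda_{cyl}) + \lambda\, g(u^\lambda_{cyl}) = 0$ from \eqref{statlambda1a} with respect to $\lambda$ yields
\begin{align*}
-L_\lambda\, \dot u = g(u^\lambda_{cyl})\,, \qquad L_\lambda := DF(u^\lambda_{cyl}) + \lambda\, Dg(u^\lambda_{cyl})\,,
\end{align*}
together with the Dirichlet data $\dot u(\pm1)=0$. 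Since the electrostatic force $g$ from \eqref{filmdimensionless0} is nonnegative and, near the cylinder, bounded away from zero (at $u=0$ one computes $g(0) = 1/\ln(2)^2 > 0$), the right-hand side is strictly positive. For $\sigma > \sigma_{cyl}$ and $\lambda$ close to $\lambda_{cyl}$, the operator $L_\lambda$ is a small perturbation of $L := DF(0) + \lambda_{cyl}\, Dg(0)$, whose spectrum lies in the open left half-plane; this is exactly the spectral information underlying the asymptotic stability in Theorem \ref{stability2}(ii). In particular $L_\lambda$ is invertible, and it remains to pin down the sign of $\dot u = -L_\lambda^{-1}\, g(u^\lambda_{cyl})$.

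The decisive step is to show that $-L_\lambda$ is inverse-positive, i.e.\ that $-L_\lambda w \geq 0$ together with $w(\pm1)=0$ forces $w \geq 0$, with strict positivity in $(-1,1)$ whenever the datum does not vanish identically. Granting this, the strictly positive right-hand side $g(u^\lambda_{cyl})$ immediately gives $\dot u > 0$ on $(-1,1)$. To obtain inverse positivity I would argue through the semigroup generated by $L_\lambda$: its second-order elliptic part (at the cylinder, $\sigma^2 \partial_z^2 + 1$) generates a positive, order-preserving semigroup by the parabolic maximum principle, and if the non-local electrostatic contribution $\lambda\, Dg(u^\lambda_{cyl})$ preserves positivity as well — equivalently, if $L_\lambda$ satisfies the positive minimum principle — then $\{e^{tL_\lambda}\}_{t\geq 0}$ is positive. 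Combined with the negative spectral bound from the previous paragraph, the representation $-L_\lambda^{-1} = \int_0^\infty e^{tL_\lambda}\,dt$ shows $-L_\lambda^{-1} \geq 0$, and the strong maximum principle upgrades this to strict interior positivity. Alternatively, one may run the same argument at the level of two solutions: subtracting the equations for $u^{\overline\lambda}_{cyl}$ and $u^\lambda_{cyl}$ and applying the fundamental theorem of calculus writes $w := u^\lambda_{cyl} - u^{\overline\lambda}_{cyl}$ as the solution of $-\widetilde L\, w = (\lambda - \overline\lambda)\, g(u^{\overline\lambda}_{cyl})$, where $\widetilde L$ is an averaged linearization close to $L$; inverse positivity of $-\widetilde L$ then yields $w > 0$ directly, with no explicit reference to $\dot u$.

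With $\dot u(\cdot\,;\tau) > 0$ on $(-1,1)$ for every $\tau$ in a (possibly smaller) interval around $\lambda_{cyl}$, the claim follows by integration: for $\lambda_{cyl}-\delta < \overline\lambda < \lambda < \lambda_{cyl}+\delta$ and $z \in (-1,1)$,
\begin{align*}
u^\lambda_{cyl}(z) - u^{\overline\lambda}_{cyl}(z) = \int_{\overline\lambda}^{\lambda} \partial_\tau u^{\tau}_{cyl}(z)\, d\tau > 0\,,
\end{align*}
which is precisely the asserted inequality $u^{\overline\lambda}_{cyl}(z) < u^\lambda_{cyl}(z)$. I expect the main obstacle to be the positivity-preserving property of the non-local operator $Dg(0)$: unlike a classical reaction term, $g$ couples $u$ to the electrostatic potential through the free boundary problem \eqref{eq2}-\eqref{eq3}, so verifying the positive minimum principle for $L_\lambda$ requires exploiting the explicit structure of the linearized potential (the functions $h_s$ entering the eigencurve profile $[s \mapsto \mu(s)]$) together with a maximum principle for \eqref{eq2}. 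A secondary point demanding care is that the local zeroth-order coefficient $(u+1)^{-2}$ is \emph{positive} and thus works against the maximum principle; inverse positivity is available only because $\sigma > \sigma_{cyl}$ keeps the principal eigenvalue of $L_\lambda$ strictly negative, so the threshold condition enters in an essential way.
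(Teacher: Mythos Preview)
Your outline reduces the theorem to the inverse-positivity of $-L_\lambda$, and you correctly identify this as the crux. The problem is that this step is not merely a technical obstacle to be checked: it is exactly where the argument breaks down. The non-local part $\lambda_{cyl}\,Dg(0)$ is not a lower-order perturbation of the elliptic principal part; from \eqref{lo} it reads
\[
\lambda_{cyl}\,Dg(0)v = 2v + 2\,\partial_r(-\Delta_{cyl,D})^{-1}\Big[-\tfrac{2}{r^3}v - \sigma^2\,\tfrac{2-r}{r}\,v_{zz}\Big](\cdot,1),
\]
so it contains $v_{zz}$ and is of the same order as $\sigma^2\partial_z^2 v$. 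No positive-minimum-principle or Kato-type criterion applies to such a perturbation, and there is no reason to expect $\{e^{tL}\}$ to be positive. The paper states this explicitly after \eqref{cylabl}: the non-local part ``most likely precludes the use of any standard method such as applying the maximum principle.'' Your proposed verification via ``the explicit structure of the linearized potential (the functions $h_s$)'' would have to establish positivity of a Fourier multiplier whose symbol is $2\,\partial_r h_s(1)$, and nothing in the analysis of $h_s$ gives that.

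The paper therefore abandons inverse-positivity altogether and proves positivity only of the \emph{specific} function $-L^{-1}\mathbbm{1}$. It expands $\partial_\lambda u^{\lambda_{cyl}}_{cyl}$ in the Dirichlet--Fourier basis (Lemma~\ref{directioncyl3a}), obtaining an odd cosine series with coefficients $a_j = (-1)^j/\bigl[(2j+1)(-\mu_{2j}(\sigma))\bigr]$; then it proves a quantitative two-sided bound $-\sigma^2\nu_{2j} < \mu_{2j}(\sigma) < -\tfrac{3}{10}\sigma^2(\nu_{2j}-\nu_0)$ on the eigenvalues (Lemma~\ref{directioncyl3}, using monotonicity of the eigencurve profile and the exact value $\mu'(0)<-3/10$), and feeds these into an elementary lemma on odd cosine sums (Lemma~\ref{directioncyl2}) to get $\partial_\lambda u^{\lambda_{cyl}}_{cyl}(z) \ge C_1\cos(\tfrac{\pi}{2}z)$ with $C_1>0$. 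The extension to nearby $\lambda$ is then a $C^1$-continuity argument with a controlled Taylor remainder, not an abstract perturbation of inverse-positivity. In short: you need hard spectral estimates on the $\mu_j(\sigma)$, not a maximum principle.
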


This reflects the physically expected behaviour that increasing the electrostatic force, which is scaled by $\lambda$, pulls the film farther outwards. Theorem \ref{cyldirectionofdeflection} is in accordance with the behaviour of the simpler ODE model in \cite{Moulton08} where formal asymptotic analysis has been used to establish a similar result.\\

The proof of Theorem \ref{cyldirectionofdeflection} is based on the linear approximation
\begin{align}
u^\lambda_{cyl} &= u^{\lambda_{cyl}}_{cyl} + (\lambda-\lambda_{cyl})\,\partial_\lambda u^{\lambda_{cyl}}_{cyl} + o\big (\lambda-\lambda_{cyl} \big ) \nonumber \\
&=(\lambda-\lambda_{cyl})\, \partial_\lambda u^{\lambda_{cyl}}_{cyl} + o ( \lambda-\lambda_{cyl} )\,, \qquad \lambda \rightarrow \lambda_{cyl} \label{cylab0}
\end{align}
with
\begin{align}
\partial_\lambda u_{cyl}^{\lambda_{cyl}} &= - \big [DF(0) + \lambda_{cyl} Dg(0) \big]^{-1} g(0) \notag \\
&=-\,\frac{1}{\ln(2)^2}\, \big [DF(0) + \lambda_{cyl} Dg(0) \big]^{-1} \,\mathbbm{1} \label{cylabl}
\end{align}
in $W^2_{q,D}(-1,1)$. Here, we inserted $g(0)= \ln(2)^{-2}$, which easily follows from \eqref{filmdimensionless0}  combined with \eqref{eqCyl1b}, and used the fact that $[\lambda \mapsto u^\lambda_{cyl}]$
was constructed via the implicit function theorem. We now ask for positivity of \eqref{cylabl}, which is no easy question as the linearized operator in \eqref{cylabl} includes the non-local part $+\lambda_{cyl} Dg(0)$,
which most likely precludes the use of any standard method such as applying the maximum principle. Instead, we expand $\partial_{\lambda}u^{\lambda_{cyl}}_{cyl}$ in a Fourier series and show positivity of this series by hand.
Essential ingredients for the positivity proof are estimates on the eigencurve profile $[s \mapsto \mu(s)]$. \\

\subsection{Outline of the Paper}
We start with preliminaries in Section \ref{NP}. Then, in Section \ref{Sec2}, we present the linearization of \eqref{statlambda1a} around the cylinder $u=0$. We also derive first properties of its spectrum. Subsequently, in Section \ref{SectionCylinder}, we analyse the eigencurve profile $[s \mapsto \mu(s)]$, which enables us to prove the main results, Theorem \ref{exstatsLcyl} - Theorem \ref{cyldirectionofdeflection}, in Section \ref{Sec4}. Next, in Section \ref{Disc}, we discuss the relation between the characteristic parameters $\sigma_{crit}$ and $\sigma_{cyl}$, defined in \eqref{sigmacrit} and \eqref{thv}, and its interpretation as a balancing effect of the electrostatic force. We conclude with Appendix \ref{Appendices}, which contains some outsourced computations.

\section{Notations and Preliminaries}\label{NP}
Let $U \subset \mathbb{R}^n$ be open and bounded with a Lipschitz boundary. For $p \in (1,\infty)$ and $s\in (0,2]$ with $s \neq 1/p$, we put
\begin{align*}
 W^s_{p,D}(U):= \begin{cases} \ \ \ W^s_p(U) \quad \qquad \qquad \qquad \qquad \ \ \,\text{for} \quad s\in (0,1/p)\,, \\
  \ \ \big \lbrace f \in W^s_p(U) \, \big \vert \, f=0 \ \text{on} \ \partial U \, \big \rbrace \quad\,\text{for} \quad s\in (1/p,2]\,,
\end{cases} 
\end{align*}
where $W^s_p(U)$ denotes the usual fractional Sobolev space. For $A: W^2_{p,D}(-1,1) \rightarrow L_p(-1,1)$, we write $A \in \mathcal{H}  \big(W^2_{p,D}(-1,1), L_p(-1,1) \big )$
if $-A$ generates an analytic semigroup on $L_p(-1,1)$ with domain $W^2_{p,D}(-1,1)$, see \cite{AmannLQPP} for details on this theory.\\

If $E$ and $F$ are Banach spaces, we let $\mathcal{L}(E,F)$ be the Banach space of bounded linear operators from $E$ to $F$, and $\mathcal{L}_{is}(E,F)$ is the subspace of isomorphisms. Moreover, we write $E \hookrightarrow F$ if $E$ is continuously embedded in $F$, and $E \overset{c}{\hookrightarrow} F$ if the embedding is also compact.\\

For $\Omega=\big \lbrace (z,r)\in (-1,1)\times (1,2) \big \rbrace$, we let
\begin{align}
- \Delta_{cyl,D}: \ W^2_{2,D}(\Omega) \rightarrow L_2(\Omega) \,, \quad f \mapsto - \frac{1}{r} \partial_r \big ( r \partial_r f \big )  -\sigma^2 \partial_z^2 f\label{eqCyl7b}
\end{align}
be the Laplace operator in cylindrical coordinates. Moreover, $-\Delta_{cyl,D} \in \mathcal{L}_{is}(W^2_{2,D}(\Omega),L_2(\Omega))$ by \cite[Theorem 3.2.1.2]{Grisvard85}.\\

For the Fourier series ansatz, we will require the eigenvalues and eigenfunctions of $- \Delta_{cyl,D}$. Therefore, we introduce
$$L_{2,r}(1,2):= \Big (L_2(1,2), (\,\cdot \, \vert \, \cdot \, )_{L_{2,r}(1,2)} \Big )$$
with weighted scalar product 
$$(f \vert h )_{L_{2,r}(1,2)} := \int_1^2 f(r)\,h(r)\,r\,\mathrm{d} r \,, \qquad f,h \in L_2(1,2)\,,$$
and, analogously,
$$L_{2,r}(\Omega):= \Big (L_2(\Omega), (\,\cdot \, \vert \, \cdot \, )_{L_{2,r}(\Omega)} \Big )$$
with weighted scalar product 
$$(f \vert h )_{L_{2,r}(\Omega)} := \int_{-1}^1 \int_1^2 f(z,r)\,h(z,r)\,r\,\mathrm{d} r \,\mathrm{d} z \qquad f,h \in L_2(\Omega)\,.$$
These spaces are obviously isomorphic to $L_2(1,2)$ and $L_2(\Omega)$ respectively. We abbreviate both scalar products with $(\,\cdot \, \vert \, \cdot \,)_{L_{2,r}}$. \\

The operator $-\Delta_{cyl,D}$ splits into two parts: We recall that the spectrum of the one-dimensional Dirichlet-Laplacian $-\partial_z^2: W^2_{2,D}(-1,1) \rightarrow L_2(-1,1)$ consists entirely of eigenvalues
\begin{align*}
 \nu_j := \frac{(j+1)^2\pi^2}{4}   \,, \qquad j\in\mathbb{N}\,,
\end{align*}
with geometric multiplicity $1$ and corresponding normalized eigenfunctions
\begin{align*}
 \varphi_j(z):=\displaystyle\begin{cases} \,&\cos \left (\displaystyle\frac{(j+1)\pi}{2}\,z \right) \qquad \text{if $j$ is even}\,, \\ 
               \,&\sin \left (\displaystyle\frac{(j+1)\pi}{2}\,z \right) \qquad \text{if $j$ is odd}
              \end{cases} 
\end{align*}
for $ z \in (-1,1)$. The eigenfunctions $\lbrace \varphi_j\rbrace_{j \in\mathbb{N}}$ form an orthonormal basis of $L_2(-1,1)$. \\

The spectrum of $- \frac{1}{r}\partial_r (r \partial_r \, \cdot\,): W^2_{2,D}(1,2) \rightarrow L_{2}(1,2)$ consists entirely of eigenvalues 
\begin{align}
 0 < \xi_0 < \xi_1 < \dots < \xi_k \rightarrow \infty \label{ewxi}
\end{align}
with geometric multiplicity $1$. The corresponding sequence of normalized eigenfunctions $\lbrace \rho_k\rbrace_{k \in \mathbb{N}}$ belongs to $C^\infty \big ([1,2]\big)\cap W^2_{2,D}(1,2)$ and forms an orthonormal basis of $L_{2,r}(1,2)$.\\

Consequently, the spectrum of $-\Delta_{cyl,D}$ consists entirely of eigenvalues
\begin{align*}
\xi_k + \sigma^2 \nu_j \,, \quad j,k\in\mathbb{N}\,,
\end{align*}
and the corresponding eigenfunctions $\rho_k\varphi_j \in C^\infty (\overline{\Omega}) \cap W^2_{2,D}(\Omega)$ form an orthonormal basis of $L_{2,r}(\Omega)$. Moreover, for each $f \in W^2_{2,D}(\Omega)$, there exists $b_{jk}\in\mathbb{R}$ such that 
\begin{align*}
f= \sum_{j,k} b_{jk} \rho_k \varphi_j\,,
\end{align*}
where the sequence converges unconditionally in $W^2_2(\Omega)$.

\section{The Linearization}\label{Sec2}

In this section, we present the linearization $DF(0)+ \lambda_{cyl} Dg(0)$ for $\lambda_{cyl}= \ln(2)^2$ around the cylinder 
$u=0$. We also check that $DF(0)+\lambda_{cyl}Dg(0)$ generates an analytic semigroup and derive first properties of its spectrum.\\

We start by showing that the electrostatic force $g$ from \eqref{filmdimensionless0} is analytic, for which we follow \cite[Proposition 5]{ELW14}. Here, for $q \in (2,\infty)$, we study $g$ as a map from
\begin{align}
S:= \lbrace v \in W^2_{q,D}(-1,1) \, \vert \, -1 < v < 1 \rbrace \label{defS}
\end{align}
to $L_q(-1,1)$, where the choice of $S$ excludes film positions $v$ which pinch-off or touch the outer metal cylinder. Note that $g(v)$ contains the electrostatic potential $\psi_v$ whose dependency on $v$ is not obvious as $\psi_v$ solves Laplace equation on the $v$-dependent domain $\Omega(v)$. To resolve this issue, we rely on the same transformation to a fixed domain as in \cite{LSS24a}: For a given $v \in S$, we map the domain $\Omega(v)$ to the rectangle
 $$\Omega = (-1,1) \times \big ( 1,2 )$$
 through $T_v : \overline{\Omega(v)} \rightarrow \overline{\Omega}$ given by 
 \begin{align}
  T_v (z,r) := \left ( z, \frac{r-2v(z)}{1-v(z)} \right )  \,, \qquad (z,r)\in \overline{\Omega(v)}\,. \label{deftrafoT}
 \end{align}

Thanks to the chain rule and transformation results for Sobolev functions \cite[Lemma 2.3.2]{Necas12}, the electrostatic potential $\psi_v$ solves \eqref{eq3} strongly on $\Omega(v)$ if and only if the transformed electrostatic potential $\phi_v:= \psi_v \circ (T_v)^{-1}$ is a strong solution to 
 \begin{align}
 \begin{cases}
  -L_v \phi_v &=0 \qquad \quad \text{in} \quad \Omega\,,\\
	\ \ \ \,	\phi_v &= \displaystyle\frac{\mathrm{ln}(r)}{\mathrm{ln}(2)} \quad \text{on} \quad \partial \Omega\,.
  \end{cases}  \label{ellsub}
 \end{align}
 Here, the transformed $v$-dependent differential operator $-L_v$ can be written in divergence form in which it is again uniformly elliptic with $W^1_q$-coefficients depending analytically on $v$. For the precise form of $L_v$, we refer to Appendix \ref{LinearizationApp}. Putting
 \begin{align}
  L_D(v) \Phi := L_v \Phi\,, \qquad \Phi \in W^2_{2,D}(\Omega)\,, \quad v\in S \label{defLDv}
 \end{align}
which satisfies
$$L_D(v)\in  \mathcal{L}_{is}(W^2_{2,D}(\Omega) , L_2(\Omega))$$
thanks to \cite[Theorem 6.1]{LSS24a}, and letting $f_v:=L_v \frac{\ln(r)}{\ln(2)} \in L_2(\Omega)$, we find that 
\begin{align}
\phi_v := -L_D(v)^{-1} f_v +\frac{\ln(r)}{\ln(2)} \in W^2_2(\Omega)  \label{defphiv}
\end{align}
is the unique strong solution to the transformed electrostatic problem \eqref{ellsub}. 

Now, it is possible to express the electrostatic force
defined in \eqref{filmdimensionless0} in terms of the transformed electrostatic potential $\phi_v$ as follows
\begin{align}
 g(v) =   \big (1+\sigma^2 (\partial_z v )^2 \big)^{3/2} \, \frac{\vert \partial_r \phi_v(\, \cdot \,,1) \vert^2}{(1-v)^2} \,, \qquad v \in S\,. \label{g1}
\end{align}

In this formula, the dependency of $g$ on $v$ is accessible and we can prove the analogue to \cite[Proposition 5]{ELW14}:

\begin{prop}\label{propana}
Let $q \in (2,\infty)$. Then, the electrostatic force $g$ is analytic from $S$ to $L_q(-1,1)$.
\end{prop}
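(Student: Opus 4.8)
The plan is to follow the route indicated in the text, namely to mimic \cite[Proposition 5]{ELW14}, but carefully tracking the ingredients that have already been assembled. The key structural observation is that, via the formula \eqref{g1}, the map $g$ has been written as a composition of maps whose analyticity is either classical or already recorded in the excerpt. So the proof reduces to checking that each building block is analytic and that analyticity is preserved under the relevant operations (composition, products, the trace $\Phi \mapsto \partial_r \Phi(\cdot,1)$, and inversion of a family of isomorphisms).

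Concretely, I would proceed in the following steps. First, recall that $L_D(\cdot): S \to \mathcal{L}(W^2_{2,D}(\Omega),L_2(\Omega))$ depends analytically on $v$, since (as stated in the paragraph around \eqref{defLDv}, with the explicit form deferred to Appendix \ref{LinearizationApp}) the coefficients of $L_v$ in divergence form are polynomial/rational expressions in $v$ and $\partial_z v$ with nonvanishing denominators on $S$, and $W^2_{q,D}(-1,1) \hookrightarrow W^1_q(-1,1)$ is a Banach algebra for $q>2$ by Sobolev embedding; hence $v \mapsto$ (coefficients) is analytic into $W^1_q$ and therefore $v \mapsto L_D(v)$ is analytic into $\mathcal{L}(W^2_{2,D}(\Omega),L_2(\Omega))$. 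Second, since $L_D(v) \in \mathcal{L}_{is}(W^2_{2,D}(\Omega),L_2(\Omega))$ for all $v\in S$ by \cite[Theorem 6.1]{LSS24a}, and inversion is an analytic map on the open set of isomorphisms, the map $v \mapsto L_D(v)^{-1}$ is analytic from $S$ to $\mathcal{L}(L_2(\Omega),W^2_{2,D}(\Omega))$. Likewise $v \mapsto f_v = L_v\big(\tfrac{\ln r}{\ln 2}\big)$ is analytic from $S$ into $L_2(\Omega)$, because applying the analytic family of operators to the fixed element $\tfrac{\ln r}{\ln 2} \in W^2_2(\Omega)$ is analytic. Combining these, $v \mapsto \phi_v = -L_D(v)^{-1}f_v + \tfrac{\ln r}{\ln 2}$ is analytic from $S$ into $W^2_2(\Omega)$.

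Third, I would pass to the trace. The normal trace onto the lower boundary $r=1$, i.e.\ $\Phi \mapsto \partial_r \Phi(\cdot,1)$, is a bounded linear operator from $W^2_2(\Omega)$ into $W^{1/2}_2(-1,1)$ by the trace theorem; being bounded and linear it is in particular analytic. Since $W^{1/2}_2(-1,1) \hookrightarrow L_q(-1,1)$ may fail for large $q$, I would instead use elliptic regularity: the coefficients of $L_v$ lie in $W^1_q(\Omega)$ and the boundary data $\tfrac{\ln r}{\ln 2}$ is smooth, so $\phi_v \in W^2_q(\Omega)$ with analytic dependence on $v$ (one sets up the same isomorphism $L_D^q(v)\in\mathcal L_{is}(W^2_{q,D}(\Omega),L_q(\Omega))$ in the $L_q$-scale — this is the $L_q$-analogue of \cite[Theorem 6.1]{LSS24a} and can be invoked in the same way), whence the trace $\partial_r\phi_v(\cdot,1)$ lies in $W^{1-1/q}_q(-1,1)\hookrightarrow C([-1,1])$ and depends analytically on $v$ there; in particular it is analytic into $L_{2q}(-1,1)$ or $C([-1,1])$, which suffices. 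Fourth, assemble \eqref{g1}: squaring (a continuous bilinear, hence analytic, operation on the algebra $W^{1-1/q}_q$ or $C$), dividing by $(1-v)^2$ (analytic since $1-v$ is bounded away from $0$ on $S$ and lies in the Banach algebra $W^1_q$), and multiplying by $(1+\sigma^2(\partial_z v)^2)^{3/2}$ (the map $v \mapsto \partial_z v$ is bounded linear into $W^1_q$, $w\mapsto(1+\sigma^2 w^2)^{3/2}$ is analytic on the algebra $W^1_q$ since $1+\sigma^2 w^2 \ge 1$ and $t\mapsto t^{3/2}$ is analytic near positive reals, using the holomorphic functional calculus / superposition in Banach algebras). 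Products of analytic maps into $W^1_q(-1,1)\hookrightarrow L_q(-1,1)$ are analytic, so $g: S \to L_q(-1,1)$ is analytic, as claimed.

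The main obstacle is not conceptual but bookkeeping: one must verify that every intermediate space is a Banach algebra (or admits the needed multiplication/superposition estimates) and that the regularity of $\phi_v$ is high enough that its normal trace lands in a space continuously embedded into $L_q(-1,1)$ for the given $q>2$ — this is exactly why the $L_q$-theory (not just the $L_2$-theory) for $L_D(v)$ is needed, and why restricting to $q>2$ matters for the Banach-algebra property of $W^1_q(-1,1)$. Once the $L_q$-isomorphism property of $L_D(v)$ is in hand, analyticity propagates mechanically through inversion, composition, and the multilinear operations, following \cite[Proposition 5]{ELW14} verbatim in spirit.
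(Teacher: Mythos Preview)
Your overall strategy is the same as the paper's: establish analyticity of $v\mapsto L_D(v)$, $v\mapsto f_v$, hence of $v\mapsto\phi_v\in W^2_2(\Omega)$, and then assemble \eqref{g1}. The difference lies in how you handle the trace. You worry that $W^{1/2}_2(-1,1)\hookrightarrow L_q(-1,1)$ might fail for large $q$ and therefore propose to upgrade the elliptic theory to an $L_q$-isomorphism $L_D^q(v)\in\mathcal L_{is}(W^2_{q,D}(\Omega),L_q(\Omega))$. This detour is both unnecessary and not justified as written: in one space dimension $W^{1/2}_2(-1,1)\hookrightarrow L_r(-1,1)$ for every $r<\infty$, so the embedding concern is unfounded; and, more importantly, $\Omega$ is a rectangle with corners, so $W^2_q$-regularity for $q>2$ with merely $W^1_q$-coefficients is not something you can simply ``invoke in the same way'' as the $L_2$-result from \cite{LSS24a} --- it would require separate justification that the paper deliberately avoids (cf.\ Remark~\ref{GLFourier}).

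The paper stays entirely in the $W^2_2$-framework for $\phi_v$, takes the trace $\partial_r\phi_v(\cdot,1)\in W^{1/2}_2(-1,1)$, and then handles the products in \eqref{g1} by invoking a multiplication theorem for fractional Sobolev spaces \cite[Theorem~4.1, Remark~4.2(d)]{Amann91} (see also \cite[Theorem~7.1]{ELW15}): pointwise multiplication $W^{1/2}_2(-1,1)\times W^{1/2}_2(-1,1)\to L_q(-1,1)$ is continuous bilinear (hence analytic), and multiplying by the $W^1_q(-1,1)\hookrightarrow L_\infty(-1,1)$ factors $(1+\sigma^2(\partial_z v)^2)^{3/2}$ and $(1-v)^{-2}$ is likewise analytic. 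If you drop the $L_q$-regularity upgrade and argue this way, your proof coincides with the paper's.
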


\begin{proof}
First, we note that the mappings
\begin{align*}
\left [  v \mapsto \frac{1}{1-v} \right ]\,,\quad \bigg [  v \mapsto \big (1+ \sigma^2 (\partial_z v)^2 \big )^{3/2} \bigg ]
\end{align*}
are analytic from $S$ to $W^1_{q}(-1,1)$, which follows from an adaptation of \cite[Example~4.3.6]{BT03} and from the fact that the composition of analytic maps is again analytic. Next, we deduce that the maps
\begin{align*}
[v \mapsto L_D(v)] : S \rightarrow \mathcal{L}\big (W^2_{2,D}(\Omega), L_2(\Omega)\big)\,, \qquad [v \mapsto f_v] : S \rightarrow L_2(\Omega)
\end{align*}
are also analytic so that the definition of $\phi_v$ from \eqref{defphiv} combined with the analyticity of the inversion map $[\ell \mapsto \ell^{-1}]$ for bounded linear operators implies that $[v \mapsto \phi_v]$ is analytic from $S$ to $W^2_2(\Omega)$ as well. Finally, the representation of $g$ in terms of $\phi_v$ in \eqref{g1} and the Multiplication Theorem for fractional Sobolev Spaces \cite[Theorem 4.1, Remark 4.2\,(d)]{Amann91} (see also \cite[Theorem 7.1]{ELW15}) yield the analyticity of $g$ from $S$ to $L_q(-1,1)$.
\end{proof}

Next, we present the linearization of \eqref{statlambda1a}  around $0$.  The lengthy computation is given in Appendix \ref{LinearizationApp}. 

\begin{lem}\label{Cyl3a}
The linearization of \eqref{statlambda1a} around $0$ is given by
\begin{align}
\big (DF(0)+\lambda_{cyl} Dg(0) \big ) v=\sigma^2 \partial_z^2 v +3v + 2 \, \partial_r  (-\Delta_{cyl,D}  )^{-1} \Big [- \frac{2}{r^3} v - \sigma^2\, \frac{2-r}{r}\, v_{zz} \Big ](\, \cdot \,,1) \,.  \label{lo}
\end{align}
for $v \in W^2_{q,D}(-1,1)$.
\end{lem}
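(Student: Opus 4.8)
The plan is to compute the two summands $DF(0)$ and $\lambda_{cyl}Dg(0)$ separately and add them. The first is elementary: differentiating \eqref{CatFa} at $u=0$ and using $\arctan'(0)=1$ together with the fact that $1+\sigma^2(\partial_z u)^2$ equals $1$ there, the term $\sigma\partial_z\arctan(\sigma\partial_z u)$ contributes $\sigma^2\partial_z^2 v$, while $-1/(u+1)$ contributes $+v$; hence $DF(0)v=\sigma^2\partial_z^2 v+v$.

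For $Dg(0)$ I would work from the representation \eqref{g1}, which expresses $g$ through the transformed potential $\phi_v$ and was the basis of Proposition~\ref{propana}, and apply the product rule to the three factors $(1+\sigma^2(\partial_z v)^2)^{3/2}$, $|\partial_r\phi_v(\cdot,1)|^2$ and $(1-v)^{-2}$. At $v=0$ one has $\Omega(0)=\Omega$ and $T_0=\mathrm{id}$ by \eqref{deftrafoT}, hence $\phi_0=\ln(r)/\ln(2)$ and therefore $|\partial_r\phi_0(\cdot,1)|^2=\ln(2)^{-2}$; the first factor equals $1$ and has vanishing Fréchet derivative at $v=0$ because $\partial_z v$ vanishes there, and the third factor equals $1$ with derivative $[v\mapsto 2v]$. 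What survives is
$$Dg(0)v=\frac{2}{\ln(2)}\,\partial_r\dot\phi(\cdot,1)+\frac{2}{\ln(2)^2}\,v,$$
where $\dot\phi\in W^2_2(\Omega)$ denotes the derivative of $[v\mapsto\phi_v]$ at $0$ in the direction $v$. Multiplying by $\lambda_{cyl}=\ln(2)^2$ from \eqref{eqCyl2}, the logarithmic constants collapse and $\lambda_{cyl}Dg(0)v=2\ln(2)\,\partial_r\dot\phi(\cdot,1)+2v$.

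It remains to identify $\dot\phi$. Here I would differentiate the closed formula \eqref{defphiv}, $\phi_v=-L_D(v)^{-1}f_v+\ln(r)/\ln(2)$. The decisive point is that $\ln(r)/\ln(2)$ is harmonic for the cylindrical Laplacian, so $f_0=L_0(\ln(r)/\ln(2))=0$; consequently the term coming from differentiating $v\mapsto L_D(v)^{-1}$ is killed, and since $-L_D(0)$ coincides with $-\Delta_{cyl,D}$, which is an isomorphism by \eqref{eqCyl7b}, one obtains $\dot\phi=-L_D(0)^{-1}\dot f=(-\Delta_{cyl,D})^{-1}\dot f$, where $\dot f\in L_2(\Omega)$ is the derivative of $[v\mapsto f_v]$ at $0$ in direction $v$. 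Inserting this into $\lambda_{cyl}Dg(0)v$ and adding $DF(0)v$ gives the claimed expression \eqref{lo}, provided one knows that
$$\dot f=\frac{1}{\ln(2)}\Big(-\frac{2}{r^3}\,v-\sigma^2\,\frac{2-r}{r}\,v_{zz}\Big),$$
the factor $\ln(2)^{-1}$ then cancelling against the prefactor $2\ln(2)$.

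The genuinely technical step, which I would outsource to Appendix~\ref{LinearizationApp}, is exactly the verification of this last identity. It amounts to writing $-L_v$ in divergence form, differentiating its analytically $v$-dependent coefficients at $v=0$ by means of the chain rule through the map $T_v$ of \eqref{deftrafoT}, and then applying the resulting derivative operator to the fixed function $\ln(r)/\ln(2)$, checking that precisely the two displayed terms remain. I expect this coefficient bookkeeping to be the main obstacle; everything else reduces to routine applications of the product and chain rules together with the mapping properties of $L_D(0)$ and $-\Delta_{cyl,D}$ recorded above, which is why it is natural to state the lemma here and defer the calculation.
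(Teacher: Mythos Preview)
Your proposal is correct and follows essentially the same route as the paper: split into $DF(0)$ and $\lambda_{cyl}Dg(0)$, use \eqref{g1} together with $\phi_0=\ln(r)/\ln(2)$ and $f_0=0$ to reduce $\dot\phi$ to $(-\Delta_{cyl,D})^{-1}\dot f$, and defer the explicit computation of $\dot f$ to the appendix. The only minor deviation is in the deferred step: the paper applies the non-divergence form \eqref{Lvnondivergence} of $L_v$ directly to $\ln(r)/\ln(2)$ to obtain $f_v$ as an explicit scalar function of $v,\partial_z v,\partial_z^2 v,r$ and then differentiates that expression, rather than differentiating the divergence-form coefficients first as you outline; the former is slightly quicker here because applying $L_v$ to a function of $r$ alone kills the mixed and $\partial_z^2$ terms immediately.
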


\begin{proof}
By \eqref{DF(0)} and \eqref{Dg(0)} in Appendix \ref{LinearizationApp}, we have
\begin{align*}
DF(0)v &= \sigma^2 \partial_z^2 v +v\,, \\
\lambda_{cyl}\, Dg(0)v&= 2 v  + 2 \, \partial_r  (-\Delta_{cyl,D}  )^{-1} \Big [- \frac{2}{r^3} v - \sigma^2\, \frac{2-r}{r}\, v_{zz} \Big ](\, \cdot \,,1) \,, \qquad v \in W^2_{q,D}(-1,1)\,,
\end{align*}
from which the assertion follows.
\end{proof}

Now, we show that $DF(0)+\lambda_{cyl}Dg(0)$ is the generator of an analytic semigroup for which we rely on a perturbation result. Here, it is possible and important to include the case $q=2$ on which we comment in Remark \ref{GLFourier}.

\begin{prop}\label{Cyl1}
For $q \geq 2$, we have
\begin{align*}
- \big ( DF(0) + \lambda_{cyl} \,Dg(0) \big ) \in \mathcal{H} \big (W^2_{q,D}(-1,1),L_q(-1,1) \big ) \,.
\end{align*}
\end{prop}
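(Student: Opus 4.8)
The plan is to realize the operator $A:=DF(0)+\lambda_{cyl}Dg(0)$ as a bounded perturbation of $\sigma^2\partial_z^2$ acting on $W^2_{q,D}(-1,1)$, which is the negative generator of an analytic semigroup on $L_q(-1,1)$, and then invoke a standard perturbation theorem for generators of analytic semigroups (e.g.\ the one in \cite{AmannLQPP} or \cite{Lunardi95}). Concretely, writing $A$ via the formula in Lemma~\ref{Cyl3a}, I would set $A=\sigma^2\partial_z^2+B$ with
\begin{align*}
Bv := 3v+2\,\partial_r(-\Delta_{cyl,D})^{-1}\Big[-\frac{2}{r^3}v-\sigma^2\,\frac{2-r}{r}v_{zz}\Big](\,\cdot\,,1)\,.
\end{align*}
The first term $v\mapsto 3v$ is trivially bounded on $L_q$, so the whole point is to show that the nonlocal term is a relatively bounded perturbation of $\sigma^2\partial_z^2$ with relative bound $<1$ — in fact I expect one can show it is bounded from $W^2_{q,D}(-1,1)$ into $L_q(-1,1)$ with arbitrarily small norm after adding a large multiple of the identity, which is exactly what the perturbation result needs. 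Since $\partial_z^2$ already generates an analytic semigroup on $L_q$, and lower–order (here: relatively compact/relatively small) perturbations preserve this, the conclusion $-A\in\mathcal H(W^2_{q,D}(-1,1),L_q(-1,1))$ follows.

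The key estimate to establish is therefore: the map
\begin{align*}
v\longmapsto \partial_r(-\Delta_{cyl,D})^{-1}\Big[-\tfrac{2}{r^3}v-\sigma^2\tfrac{2-r}{r}v_{zz}\Big](\,\cdot\,,1)
\end{align*}
is bounded from $W^2_{q,D}(-1,1)$ to $L_q(-1,1)$, and moreover for every $\varepsilon>0$ there is $C_\varepsilon$ with $\|Bv\|_{L_q}\le \varepsilon\|v\|_{W^2_{q,D}}+C_\varepsilon\|v\|_{L_q}$, which by interpolation ($W^2_{q,D}\hookrightarrow W^s_{q}$, $0<s<2$) reduces to boundedness of the above map from $L_q(-1,1)$ (or from a space $W^s_{q,D}$ with $s<2$) into $L_q(-1,1)$. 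To get this I would argue as in the analyticity proof of $g$: for fixed $z$, the data $-\tfrac2{r^3}v(z)-\sigma^2\tfrac{2-r}{r}v_{zz}(z)$ lies in $L_2(\Omega)$, $(-\Delta_{cyl,D})^{-1}$ maps $L_2(\Omega)\to W^2_{2,D}(\Omega)$ isomorphically by \cite[Theorem 3.2.1.2]{Grisvard85}, and the trace of the $r$-derivative on $\{r=1\}$ lands in $W^{1/2}_2(-1,1)\hookrightarrow L_q(-1,1)$; combining with the Multiplication Theorem for fractional Sobolev spaces \cite[Theorem 4.1]{Amann91} and tracking the $v$ versus $v_{zz}$ dependence yields the claimed mapping and — after shifting by $\omega\,\mathrm{Id}$ for $\omega$ large, which makes the resolvent norm of $(-\Delta_{cyl,D}+\omega)^{-1}$ small in the relevant operator norm — the small relative bound.

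The main obstacle I anticipate is the bookkeeping around the two terms of the data having different orders in $v$: the term $\sigma^2\tfrac{2-r}{r}v_{zz}$ is genuinely order two, so naively $Bv$ would only be controlled by $\|v_{zz}\|$, i.e.\ relative bound $1$, which is not enough. The resolution is that $(-\Delta_{cyl,D})^{-1}$ gains two derivatives in $r$ but the trace of $\partial_r$ on $\{r=1\}$ of a $W^2_2(\Omega)$-function still only requires less than full elliptic regularity in the $z$-direction, so one actually controls $Bv$ by $\|v\|_{W^s_q}$ for some $s<2$ (this is precisely the mechanism that works in \cite{ELW15} and \cite{LSS24a}); making this quantitative — identifying the right intermediate space and invoking the interpolation inequality $\|v\|_{W^s_q}\le \varepsilon\|v\|_{W^2_{q,D}}+C_\varepsilon\|v\|_{L_q}$ — is the crux. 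The case $q=2$ is included here because $L_2(-1,1)$ is exactly the space in which the Fourier-series spectral analysis of the next sections is carried out, and the perturbation argument goes through verbatim for $q=2$; I would flag this in the promised Remark~\ref{GLFourier}.
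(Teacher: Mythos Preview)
Your overall strategy---split $A=\sigma^2\partial_z^2+B$ and show $B$ is a lower-order perturbation via the chain
\[
L_2(\Omega)\xrightarrow{(-\Delta_{cyl,D})^{-1}} W^2_{2,D}(\Omega)\xrightarrow{\partial_r} W^1_2(\Omega)\xrightarrow{\mathrm{tr}} W^{1/2}_2(-1,1)\hookrightarrow L_q(-1,1)
\]
---is exactly the paper's. Where you diverge is in the last step, and there you overcomplicate things. The paper simply notes that the final embedding $W^{1/2}_2(-1,1)\hookrightarrow L_q(-1,1)$ is \emph{compact} in one dimension for every $q<\infty$. Since $v\mapsto -\tfrac{2}{r^3}v-\sigma^2\tfrac{2-r}{r}v_{zz}$ is bounded $W^2_{q,D}(-1,1)\to L_2(\Omega)$, the whole nonlocal piece $\lambda_{cyl}Dg(0)$ is a \emph{compact} operator from $W^2_{q,D}(-1,1)$ to $L_q(-1,1)$. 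A compact perturbation of a generator in $\mathcal H(W^2_{q,D},L_q)$ stays in $\mathcal H$ (e.g.\ \cite[Theorem~I.1.5.1]{AmannLQPP} or \cite[Proposition~2.4.3]{Lunardi95}), and you are done. In particular, compactness already gives $A_0$-bound zero, so your worry about ``relative bound $1$'' evaporates without any further work.

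Two specific points: (a)~your proposal to shift by $\omega$ so that ``$(-\Delta_{cyl,D}+\omega)^{-1}$'' becomes small is misplaced---$(-\Delta_{cyl,D})^{-1}$ is a fixed operator in the formula for $Dg(0)$, not the resolvent of the principal part $\sigma^2\partial_z^2$, so shifting $A$ does nothing to it; (b)~your alternative route of extending $B$ to $W^s_q$ for $s<2$ would force you to make sense of $(-\Delta_{cyl,D})^{-1}$ acting on data containing the distribution $v_{zz}$ with $v\in W^s_q$, which is doable but needlessly delicate compared to the one-line compactness observation.
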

\begin{proof}
First we note that
\begin{align*}
DF(0)v = \sigma^2 \partial_z^2 v +v
\end{align*}
is the generator of an analytic semigroup, i.e.
\begin{align}
 -DF(0)  \in \mathcal{H}\big (W^2_{q,D}(-1,1),L_q(-1,1)\big )\,. \label{eqCyl7}
\end{align}
Second, we note that the following composition of maps
\begin{align*}
L_2(\Omega) \overset{(-\Delta_{cyl,D})^{-1}}{\longrightarrow} W^{2}_{2,D}(\Omega) \overset{\partial_r}{\longrightarrow} W^{1}_{2}(\Omega) \overset{\mathrm{tr}}{\longrightarrow} W^{1/2}_2(-1,1) \overset{c}{\hookrightarrow} L_q(-1,1) 
\end{align*}
defines a compact linear operator from $L_2(\Omega)$ to $L_{q}(-1,1)$. The notion $\mathrm{tr}$ denotes the trace operator with respect to the boundary part $r \equiv 1$. 
Because the map $\big [ v \mapsto -2/r^3 \,v - \sigma^2 (2-r)/r \,v_{zz} \big ]$ is bounded from $W^2_{q,D}(-1,1)$ to $L_2(\Omega)$, it follows that 
$$\lambda_{cyl} Dg(0) \in \mathcal{L} \big (W^2_{q,D}(-1,1),L_q(-1,1) \big)$$
is compact as the composition of a compact and a bounded operator. Now, the assertion follows from \eqref{eqCyl7} and the perturbation result \cite[Proposition 2.4.3]{Lunardi95} (or \cite[Theorem I.1.5.1]{AmannLQPP}).
\end{proof}
\color{black}

\begin{bem}\label{GLFourier}
Considering the linearization $DF(0) + \lambda_{cyl} Dg(0)$ as a bounded linear operator (or even a generator of an analytic semigroup) from $W^2_{2,D}(-1,1)$ to $L_2(-1,1)$ is crucial for us as it allows us to work with the Fourier series. However, it is not possible to include the case $q=2$ from the beginning because the assumption $q>2$ is needed to even define the electrostatic force $g$ in \eqref{filmdimensionless0}. This is due to the corners of $\Omega(u)$ and we refer to \cite{LSS24a} for more explanations. \\
\end{bem}

 Based on this preparation, we can compute the Fourier representation of $DF(0)+\lambda_{cyl} Dg(0)$.
 
\begin{lem}\label{Cyl2}
For $q\geq2$ and $v \in W^2_{q,D}(-1,1)$, the linearized operator can be written as
\begin{align*}
(DF(0) + \lambda_{cyl} Dg(0) \big ) v =  \sigma^2 \partial_z^2 v + 3v +2 (B_1 +  B_2)v 
\end{align*}
in $L_2(-1,1)$ where
\begin{align*}
B_1 v :=  \sum_{j,k} \frac{c_k}{\xi_k + \sigma^2 \nu_j}\, \partial_r\rho_k(1)  \,(v \vert \varphi_j)_{L_2} \,\varphi_j 
\end{align*}
with (unconditional) convergence in $L_2(-1,1)$ and
\begin{align*}
 B_2 v:=\sum_{j,k}\frac{\sigma^2\nu_j\, d_k}{\xi_k + \sigma^2 \nu_j}\, \partial_r\rho_k(1) \,(v \vert \varphi_j)_{L_2} \,\varphi_j 
\end{align*}
with (unconditional) convergence in $L_2(-1,1)$. The coefficients $(c_k), \ (d_k) \in \ell_2$ are given by
\begin{align*}
c_k := \Big ( - \frac{2}{r^3} \, \Big \vert \, \rho_k \Big )_{L_{2,r}} \,, \qquad d_k := \Big ( \frac{2-r}{r} \, \Big \vert \, \rho_k \Big )_{L_{2,r}}\,,  \qquad k \in \mathbb{N} .
\end{align*}
\end{lem}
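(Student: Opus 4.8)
The plan is to take the closed‑form operator from Lemma~\ref{Cyl3a},
\begin{align*}
\big(DF(0)+\lambda_{cyl}Dg(0)\big)v=\sigma^2\partial_z^2 v+3v+2\,\partial_r(-\Delta_{cyl,D})^{-1}\Big[-\tfrac{2}{r^3}v-\sigma^2\tfrac{2-r}{r}v_{zz}\Big](\,\cdot\,,1),
\end{align*}
and simply expand the non‑local term in the orthonormal basis $\{\rho_k\varphi_j\}$ of $L_{2,r}(\Omega)$. First I would fix $v\in W^2_{q,D}(-1,1)\hookrightarrow W^2_{2,D}(-1,1)$ and write $v=\sum_j(v\,|\,\varphi_j)_{L_2}\varphi_j$ and $v_{zz}=-\sum_j\nu_j(v\,|\,\varphi_j)_{L_2}\varphi_j$, the latter converging in $L_2(-1,1)$ since $v\in W^2_{2,D}$. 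Then the source term is
\begin{align*}
-\tfrac{2}{r^3}v-\sigma^2\tfrac{2-r}{r}v_{zz}=\sum_j(v\,|\,\varphi_j)_{L_2}\Big(-\tfrac{2}{r^3}+\sigma^2\nu_j\tfrac{2-r}{r}\Big)\varphi_j,
\end{align*}
and expanding the $r$‑dependent factors against $\{\rho_k\}$ in $L_{2,r}(1,2)$ introduces precisely the coefficients $c_k=(-2/r^3\,|\,\rho_k)_{L_{2,r}}$ and $d_k=((2-r)/r\,|\,\rho_k)_{L_{2,r}}$, which lie in $\ell_2$ because $\{\rho_k\}$ is an orthonormal basis. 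Since $-\Delta_{cyl,D}$ is diagonalized by $\rho_k\varphi_j$ with eigenvalue $\xi_k+\sigma^2\nu_j$, applying $(-\Delta_{cyl,D})^{-1}$ multiplies the $(j,k)$‑coefficient by $(\xi_k+\sigma^2\nu_j)^{-1}$; then $\partial_r$ and evaluation at $r=1$ replace $\rho_k(r)$ by the scalar $\partial_r\rho_k(1)$, leaving exactly $B_1v+B_2v$ with the stated series, the $B_1$ part coming from the $c_k$ (i.e.\ $-2/r^3$) term and $B_2$ from the $\sigma^2\nu_j d_k$ (i.e.\ $(2-r)/r$) term.

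The remaining work is to justify the interchange of $(-\Delta_{cyl,D})^{-1}$, $\partial_r$, the trace, and the infinite sums, and to confirm the claimed unconditional convergence in $L_2(-1,1)$. I would argue as in the proof of Proposition~\ref{Cyl1}: the map $f\mapsto\partial_r(-\Delta_{cyl,D})^{-1}f(\,\cdot\,,1)$ factors as $L_2(\Omega)\to W^2_{2,D}(\Omega)\to W^1_2(\Omega)\xrightarrow{\mathrm{tr}}W^{1/2}_2(-1,1)\hookrightarrow L_2(-1,1)$ and is therefore a bounded (indeed compact) linear operator; applying a bounded operator to an unconditionally convergent series in $L_2(\Omega)$ yields an unconditionally convergent series of images. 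So it suffices to check that $\sum_{j,k}(v\,|\,\varphi_j)_{L_2}\big(c_k+\sigma^2\nu_j d_k\big)(\xi_k+\sigma^2\nu_j)^{-1}\rho_k\varphi_j$ converges unconditionally in $W^2_{2,D}(\Omega)$, equivalently that $\sum_{j,k}\big|(v\,|\,\varphi_j)_{L_2}\big|^2\big(c_k+\sigma^2\nu_j d_k\big)^2(\xi_k+\sigma^2\nu_j)^{-2}(1+\xi_k+\sigma^2\nu_j)^2<\infty$; since $(\xi_k+\sigma^2\nu_j)^{-2}(1+\xi_k+\sigma^2\nu_j)^2$ is bounded, $(c_k),(d_k)\in\ell_2$, $\nu_j\le C(\xi_k+\sigma^2\nu_j)$ uniformly, and $\big((v\,|\,\varphi_j)_{L_2}\nu_j\big)\in\ell_2$ because $v\in W^2_{2,D}(-1,1)$, the double sum is finite. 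This is the one point requiring genuine care—keeping track of which combinations of $\nu_j$ and $\xi_k+\sigma^2\nu_j$ appear so that the summability is not lost—but it is a routine Parseval estimate once the bookkeeping is set up.

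Finally I would identify the two series in the output with $B_1$ and $B_2$ as stated, and note that $\partial_r\rho_k(1)$ makes sense since $\rho_k\in C^\infty([1,2])$, so $B_1v$ and $B_2v$ are genuine $L_2(-1,1)$ functions; the split of $3v$ into $DF(0)$'s contribution $v$ plus $\lambda_{cyl}Dg(0)$'s contribution $2v$ is already recorded in the proof of Lemma~\ref{Cyl3a}. I do not expect any serious obstacle: the only substantive step is the convergence bookkeeping just described, and everything else is a direct transcription of Lemma~\ref{Cyl3a} into the eigenbasis of $-\Delta_{cyl,D}$.
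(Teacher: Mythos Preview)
Your proposal is correct and follows essentially the same route as the paper: both expand $f=(-\Delta_{cyl,D})^{-1}\big[-\tfrac{2}{r^3}v-\sigma^2\tfrac{2-r}{r}v_{zz}\big]$ in the basis $\{\rho_k\varphi_j\}$, identify the coefficients as $(c_k+\sigma^2\nu_j d_k)(\xi_k+\sigma^2\nu_j)^{-1}(v\,|\,\varphi_j)_{L_2}$, and then take $\partial_r$ at $r=1$. The only cosmetic difference is that the paper invokes the preliminary fact that any $f\in W^2_{2,D}(\Omega)$ has an unconditionally $W^2_2$-convergent Fourier series and then reads off the coefficients by testing, whereas you build the series forward and verify convergence by an explicit Parseval estimate; both arguments are equivalent and your bookkeeping is sound.
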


\begin{proof}
For $v \in W^2_{q,D}(-1,1) \hookrightarrow W^2_{2,D}(-1,1)$, we represent the corresponding solution $f \in W^2_{2,D} ( \Omega )$ to 
\begin{align}
(- \Delta_{cyl,D})f= - \frac{2}{r^3} v - \sigma^2 \frac{2-r}{r} v_{zz} \label{eqCyl12b}
\end{align}
 by its Fourier series
\begin{align*}
f = \sum_{j,k} b_{jk}\,\rho_k \, \varphi_j \,, \qquad  b_{jk} \in \mathbb{R}\,.
\end{align*}
From \eqref{eqCyl12b} and the orthogonality of $\lbrace \rho_k\,\varphi_j \rbrace_{j,k}$ in $L_{2,r}(\Omega)$, we deduce that
\begin{align*}
b_{jk} \big (\xi_k + \sigma^2 \nu_j \big) = 
&=(c_k + \sigma^2 \nu_j \,d_k ) \,(v \vert \varphi_j)_{L_2}\,.
\end{align*}
Hence, we have
\begin{align}
f = \sum_{j,k} \frac{c_k+\sigma^2 \nu_j\, d_k}{\xi_k + \sigma^2 \nu_j} \,(v \vert \varphi_j)_{L_2} \,\varphi_j\,\rho_k  \label{eqCyl12c}
\end{align}
and 
\begin{align}
\partial_r f( \, \cdot \,,1) = \sum_{j,k} \frac{c_k+\sigma^2 \nu_j\, d_k}{\xi_k + \sigma^2 \nu_j} \,\partial_r\rho_k(1)  \,(v \vert \varphi_j)_{L_2} \,\varphi_j  \label{eqCyl3}
\end{align}
in $L_2(-1,1$) with $(c_k)\,, \, (d_k) \in \ell_2$. Noting that
\begin{align*}
(DF(0) + \lambda_{cyl} Dg(0) \big ) v =  \sigma^2 \partial_z^2 v + 3v+ 2\, \partial_r f( \, \cdot \,,1)
\end{align*} 
thanks to \eqref{lo}, the assertion follows from \eqref{eqCyl3}.
\end{proof}

Next, we prove that all eigenvalues of the linearization are real and that they are given by scaled versions of the same profile function:

\begin{defin}\label{defprofil}
 We call $  \mu:  (0,\infty) \rightarrow \mathbb{R}$, given by 
 \begin{align}
\mu(s):= -s+3+2 \bigg [ \sum_k \frac{c_k}{\xi_k + s} \partial_r\rho_k(1) \bigg  ]+2s\,\bigg [ \sum_k \frac{d_k}{\xi_k +s} \partial_r\rho_k(1) \bigg  ]\,, \qquad s > 0\,, \label{nudef}
 \end{align}
{\it eigencurve profile} for $DF(0)+ \lambda_{cyl} Dg(0)$. The coefficients $(c_k)$ and $(d_k)$ are the same as in Lemma \ref{Cyl2}.
\end{defin}

The well-definedness of $[s \mapsto \mu(s)]$ is a consequence of the next Lemma \ref{Cyl3}, in which we establish a connection between $[s \mapsto \mu(s)]$ and the eigenvalues of the linearization $DF(0)+\lambda_{cyl}Dg(0)$. Furthermore, we point out that the final form of $[s \mapsto \mu(s)]$ will be derived in Lemma \ref{Cyl5}, which will also show that the eigencurve profile is defined in $s=0$.

\begin{lem}\label{Cyl3}
The spectrum of $DF(0)+ \lambda_{cyl} Dg(0)$ consists entirely of real eigenvalues with no finite accumulation point. These eigenvalues are given by
\begin{align*}
\mu_j(\sigma):=\mu \big (\sigma^2 \nu_j \big )
\end{align*}
for $j \in \mathbb{N}$ (at this stage possibly neither ordered nor distinct). An eigenfunction which coincides with the $j$-th eigenfunction $\varphi_j$ of the one-dimensional Dirichlet-Laplacian corresponds to each eigenvalue. 
\end{lem}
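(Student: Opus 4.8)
The plan is to exploit the Fourier representation from Lemma \ref{Cyl2} together with the fact that the one-dimensional eigenfunctions $\{\varphi_j\}$ diagonalize the whole linearized operator. First I would take an arbitrary $j \in \mathbb{N}$ and simply apply the formula from Lemma \ref{Cyl2} to $v = \varphi_j$. Since $(\varphi_j \mid \varphi_i)_{L_2} = \delta_{ij}$ and $\sigma^2\partial_z^2\varphi_j = -\sigma^2\nu_j\varphi_j$, every sum over the index running parallel to $j$ collapses, and one obtains
\begin{align*}
\big(DF(0)+\lambda_{cyl}Dg(0)\big)\varphi_j = \Big(-\sigma^2\nu_j + 3 + 2\sum_k \frac{c_k}{\xi_k+\sigma^2\nu_j}\partial_r\rho_k(1) + 2\sigma^2\nu_j\sum_k\frac{d_k}{\xi_k+\sigma^2\nu_j}\partial_r\rho_k(1)\Big)\varphi_j,
\end{align*}
which is exactly $\mu(\sigma^2\nu_j)\varphi_j = \mu_j(\sigma)\varphi_j$ by Definition \ref{defprofil}. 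This shows each $\mu_j(\sigma)$ is an eigenvalue with eigenfunction $\varphi_j$, and it also confirms well-definedness of $\mu$ at the points $s=\sigma^2\nu_j$ (the series converge since $(c_k),(d_k)\in\ell_2$, $(1/(\xi_k+s))\in\ell_2$ uniformly for $s\ge 0$, and $\partial_r\rho_k(1)$ is controlled — this last point may need the bound $|\partial_r\rho_k(1)|\lesssim \sqrt{\xi_k}$, which I would cite or note as standard Sturm–Liouville trace asymptotics).

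Next I would argue that these are the \emph{only} spectral values, and that the spectrum is discrete with no finite accumulation point. The cleanest route: by Proposition \ref{Cyl1}, $-(DF(0)+\lambda_{cyl}Dg(0))$ generates an analytic semigroup and, since $\lambda_{cyl}Dg(0)$ is a compact perturbation of the operator $DF(0)$ which itself has compact resolvent, the full operator has compact resolvent; hence its spectrum consists entirely of eigenvalues with no finite accumulation point, and it suffices to classify the eigenvalues. Given the representation in $L_2(-1,1)$, expand an arbitrary eigenfunction $v$ with eigenvalue $\mu$ in the orthonormal basis $\{\varphi_j\}$, write $v = \sum_j a_j\varphi_j$, and insert into $(DF(0)+\lambda_{cyl}Dg(0))v = \mu v$. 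Because the operator acts diagonally — each $B_1,B_2$ sends $\varphi_j$ to a multiple of $\varphi_j$ — the eigenvalue equation decouples componentwise into $a_j\,\mu_j(\sigma) = \mu\,a_j$ for all $j$. Since $v\ne 0$, some $a_j\ne 0$, forcing $\mu = \mu_j(\sigma)$ for that $j$; in particular $\mu \in \mathbb{R}$. This simultaneously gives reality of all eigenvalues and shows the eigenvalues are precisely $\{\mu_j(\sigma):j\in\mathbb{N}\}$.

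The main obstacle I anticipate is the interchange of the operator $DF(0)+\lambda_{cyl}Dg(0)$ with the infinite sum $v=\sum_j a_j\varphi_j$, i.e.\ justifying the "componentwise decoupling" rigorously rather than formally. This needs: (a) continuity of the operator on $L_2$ or at least on $W^2_{2,D}$ — available since Lemma \ref{Cyl2} established the operator is well-defined with unconditionally convergent image series; (b) control that the double series defining $B_1v, B_2v$ can be summed in either order, which is where the unconditional convergence in Lemma \ref{Cyl2} is essential; and (c) for a genuine eigenfunction $v\in W^2_{q,D}\hookrightarrow W^2_{2,D}$, noting that the expansion $\sum a_j\varphi_j$ converges in $W^2_2$ (not merely $L_2$) so that applying the second-order part $\sigma^2\partial_z^2$ termwise is legitimate. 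Once these convergence bookkeeping points are in place, the algebra is immediate. A secondary point worth a sentence is the claim "no finite accumulation point": since $\mu_j(\sigma) = -\sigma^2\nu_j + 3 + o(1)$ as $j\to\infty$ (the two series tend to $0$ because $1/(\xi_k+\sigma^2\nu_j)\to 0$), one has $\mu_j(\sigma)\to-\infty$, so the eigenvalues cannot accumulate at any finite point — this also shows each eigenvalue has finite multiplicity, completing the description of the spectrum.
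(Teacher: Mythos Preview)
Your argument is correct and proceeds along the same lines as the paper: both use the Fourier representation from Lemma~\ref{Cyl2} to verify that $\varphi_j$ is an eigenfunction with eigenvalue $\mu(\sigma^2\nu_j)$, and both invoke compactness of the embedding $W^2_{q,D}\hookrightarrow L_q$ (equivalently, compact resolvent) via \cite[Theorem 6.29]{Kato95} for discreteness of the spectrum. The only difference in emphasis is how reality of the eigenvalues is established: the paper simply observes from Lemma~\ref{Cyl2} that $(Aw_1\mid w_2)_{L_2}=(w_1\mid Aw_2)_{L_2}$ for $w_1,w_2\in W^2_{q,D}(-1,1)$, i.e.\ the operator is $L_2$-symmetric, and concludes immediately; you instead expand an arbitrary eigenfunction in the basis $\{\varphi_j\}$ and read off $\mu=\mu_j(\sigma)\in\mathbb{R}$ from the diagonal action. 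Both are fine, and the symmetry observation is a half-line shortcut that sidesteps the termwise-convergence bookkeeping you flag in (a)--(c).

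One small correction to your closing remark: the $B_2$-contribution does \emph{not} vanish as $j\to\infty$, since $\sigma^2\nu_j/(\xi_k+\sigma^2\nu_j)\to 1$ for each fixed $k$. The correct asymptotic is $\mu_j(\sigma)=-\sigma^2\nu_j+O(1)$ rather than $-\sigma^2\nu_j+3+o(1)$; the conclusion $\mu_j(\sigma)\to-\infty$ still holds, but in any case this is redundant once ``no finite accumulation point'' already follows from the compact resolvent.
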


\begin{proof}
Because $W^2_{q,D}(-1,1)$ is compactly embedded in $L_q(-1,1)$, the spectrum of the complexification of the linearized operator consists only of eigenvalues with no finite accumulation point, see \cite[Theorem 6.29]{Kato95}. Moreover, Lemma \ref{Cyl2} ensures that 
$$\Big (\big (DF(0)+\lambda_{cyl} Dg(0)\big) w_1 \,\Big \vert \, w_2 \,\Big )_{L_2}= \Big (\, w_1 \,\Big \vert \, \big (DF(0)+\lambda_{cyl} Dg(0)\big )w_2 \,\Big )_{L_2} $$
for $w_1, w_2 \in W^2_{q,D}(-1,1)$ as well as
\begin{align}
\big (DF(0)+\lambda_{cyl} Dg(0)\big ) \varphi_j = \mu_j(\sigma) \varphi_j \,, \qquad j \in \mathbb{N}\,, 
\end{align}
for the $j$-th eigenfunction $\varphi_j$ of the one-dimensional Dirichlet-Laplacian. 
\end{proof}

\begin{section}{Qualitative Properties of the Eigencurve Profile}\label{SectionCylinder}

To further analyse the spectrum of the linearized operator $DF(0)+\lambda_{cyl} Dg(0)$, it suffices to investigate the eigencurve profile $[s \mapsto \mu(s)]$. In particular, we will show the following:

\begin{prop}\label{Cyl4}
The eigencurve profile $[s \mapsto \mu(s)]$ is strictly decreasing on $[0, \infty)$ and there exists $s_0 \in (0,\infty)$ with $\mu(s_0)=0$.
\end{prop}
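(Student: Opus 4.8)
The plan is to prove monotonicity and the existence of a zero by getting a useful closed form for $\mu$ first. The series expression in \eqref{nudef} is awkward because it mixes the coefficients $c_k,d_k$ and the traces $\partial_r\rho_k(1)$; the key observation is that the two bracketed sums are precisely the normal trace at $r=1$ of the solution of a one-dimensional boundary value problem. Concretely, for $s>0$ let $h_s\in W^2_{2,D}(1,2)$ solve
\begin{align*}
-\frac{1}{r}\partial_r\big(r\,\partial_r h_s\big)+s\,h_s=\frac{-2}{r^3}+s\,\frac{2-r}{r}\,,\qquad h_s(1)=h_s(2)=0\,,
\end{align*}
which is exactly the ODE announced in the introduction. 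Expanding $h_s=\sum_k a_k(s)\rho_k$ in the orthonormal basis $\{\rho_k\}$ of $L_{2,r}(1,2)$ and using $-\frac{1}{r}\partial_r(r\partial_r\rho_k)=\xi_k\rho_k$ gives $a_k(s)=(c_k+s\,d_k)/(\xi_k+s)$, so that $\partial_r h_s(1)=\sum_k \tfrac{c_k+s\,d_k}{\xi_k+s}\partial_r\rho_k(1)$ equals the combined bracket in \eqref{nudef}. Hence
\begin{align*}
\mu(s)=-s+3+2\,\partial_r h_s(1)\,,
\end{align*}
which matches the formula stated in the introduction and also makes sense at $s=0$ (this is the content of the forthcoming Lemma~\ref{Cyl5}, which I may invoke). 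With this representation the analysis reduces to understanding the single scalar quantity $\partial_r h_s(1)$ as a function of $s$.

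For strict monotonicity, I would differentiate $\mu$ in $s$. Termwise differentiation of \eqref{nudef} is justified on $(0,\infty)$ by local uniform convergence (the denominators $\xi_k+s$ are bounded below, $\xi_k\to\infty$, and $(c_k),(d_k)\in\ell_2$ with $\partial_r\rho_k(1)$ growing only polynomially). This gives
\begin{align*}
\mu'(s)=-1+2\sum_k\frac{d_k\,\xi_k-c_k}{(\xi_k+s)^2}\,\partial_r\rho_k(1)\,.
\end{align*}
Rather than estimate this series directly, the cleaner route is energetic: differentiating the boundary value problem in $s$ shows $\partial_s h_s$ solves the same operator with right-hand side $\tfrac{2-r}{r}-h_s$ and homogeneous Dirichlet data, and a Rellich–Pohozaev / integration-by-parts identity expresses $\partial_r h_s(1)$ (and its $s$-derivative) through quadratic forms. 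Alternatively — and this is probably the slickest — observe that $w_s:=h_s+\tfrac{2-r}{r}$ solves $-\tfrac1r\partial_r(r\partial_r w_s)+s\,w_s=0$ on $(1,2)$ with $w_s(1)=1$, $w_s(2)=0$; i.e. $w_s$ is the unique $s$-harmonic (modified-Bessel) function with these data, so $\partial_r h_s(1)=\partial_r w_s(1)-\partial_r\big(\tfrac{2-r}{r}\big)\big|_{r=1}=\partial_r w_s(1)+1$. Then monotonicity of $\mu$ follows from the monotonicity of the Dirichlet-to-Neumann-type map $s\mapsto\partial_r w_s(1)$ for this family: $w_s$ is (up to normalization) the ratio of modified Bessel functions, and a standard comparison argument (or direct computation of $\partial_s$ of the quadratic form $\int_1^2(r|\partial_r w_s|^2+s\,r\,w_s^2)\,dr$ with the constraint $w_s(1)=1,w_s(2)=0$) shows $\partial_r w_s(1)$ is strictly decreasing in $s$. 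Combined with the explicit $-s$ term, $\mu$ is then strictly decreasing on $[0,\infty)$.

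For the existence of a zero it then suffices to exhibit the sign change. At $s=0$ the problem degenerates: $w_0$ is the harmonic function with $w_0(1)=1,w_0(2)=0$, namely $w_0(r)=\ln(2/r)/\ln 2$, so $\partial_r w_0(1)=-1/\ln 2$ and $\mu(0)=3+2\big(\partial_r w_0(1)+1\big)=5-2/\ln 2>0$ since $2/\ln 2\approx 2.885<5$. As $s\to\infty$, the family $w_s$ concentrates near $r=1$ and $\partial_r w_s(1)\to-\infty$ like $-\sqrt s$ (boundary-layer behaviour of modified Bessel functions), but even the crude bound $\partial_r h_s(1)=o(s)$ — which already follows because $\partial_r h_s(1)=\partial_r w_s(1)+1\le 1$ once one checks $\partial_r w_s(1)\le 0$ by the maximum principle applied to $w_s\ge 0$ — gives $\mu(s)\le -s+3+2\le -s+5\to-\infty$. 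By continuity of $[s\mapsto\mu(s)]$ (inherited from the local uniform convergence of the defining series) and the intermediate value theorem there is $s_0\in(0,\infty)$ with $\mu(s_0)=0$, and strict monotonicity makes it unique.

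The main obstacle I anticipate is the rigorous justification of termwise differentiation and continuity of the series in \eqref{nudef}, together with controlling the growth of $\partial_r\rho_k(1)$ in $k$ — this is the only genuinely technical point. I expect the paper sidesteps it exactly via the reformulation $\mu(s)=-s+3+2\,\partial_r h_s(1)$ (Lemma~\ref{Cyl5}), after which all the needed regularity and monotonicity of $h_s$ in $s$ are elementary ODE facts, and the sign computation at $s=0$ together with $\mu(s)\to-\infty$ finishes the proof; I would structure the writeup in that order.
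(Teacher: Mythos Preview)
Your reduction to the representation $\mu(s)=-s+3+2\,\partial_r h_s(1)$ via Lemma~\ref{Cyl5} is the right move, but the key step afterwards contains a sign error that breaks the monotonicity argument. The function solving the homogeneous equation with boundary data $(1,0)$ is $w_s:=\dfrac{2-r}{r}-h_s$, not $h_s+\dfrac{2-r}{r}$: one checks directly that $\dfrac{2-r}{r}$ is a particular solution of the \emph{inhomogeneous} equation (it satisfies $-\tfrac{1}{r}\partial_r(r\,\partial_r\tfrac{2-r}{r})+s\,\tfrac{2-r}{r}=-\tfrac{2}{r^3}+s\,\tfrac{2-r}{r}$), so subtracting $h_s$ is what kills the right-hand side. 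With the correct sign one gets $\partial_r h_s(1)=-2-\partial_r w_s(1)$ (note also $\partial_r\big(\tfrac{2-r}{r}\big)\big|_{r=1}=-2$, not $-1$). Now your Dirichlet-to-Neumann observation that $s\mapsto\partial_r w_s(1)$ is strictly decreasing is correct, but it implies that $s\mapsto\partial_r h_s(1)$ is \emph{increasing} --- this is exactly Remark~\ref{cylincreasing} in the paper. Consequently the two contributions in $\mu(s)=-s+3+2\,\partial_r h_s(1)$ compete, and strict monotonicity of $\mu$ does not follow from DtN monotonicity combined with the $-s$ term. Your ``crude bound'' also collapses: $\partial_r w_s(1)\le 0$ now yields the lower bound $\partial_r h_s(1)\ge -2$, which is useless for showing $\mu(s)\to-\infty$. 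Finally, the corrected value at $s=0$ is $\mu(0)=-1+\tfrac{2}{\ln 2}$, not $5-\tfrac{2}{\ln 2}$.

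The paper resolves this competition by proving one order more: it shows $\mu'$ is decreasing on $[0,\infty)$ by iterating the weak maximum principle. From $w_s\ge 0$ one gets $h_s\le\tfrac{2-r}{r}$, whence $p_s:=\partial_s h_s$ solves $-\tfrac{1}{r}\partial_r(r\,\partial_r p_s)+s\,p_s=\tfrac{2-r}{r}-h_s\ge 0$ with zero boundary data; a further comparison in $s$ then gives $\partial_r p_s(1)$ decreasing, i.e.\ $\mu'(s)=-1+2\,\partial_r p_s(1)$ decreasing. Coupled with the explicit computation $\mu'(0)=-2+\tfrac{3}{2\ln^2 2}-\tfrac{1}{\ln 2}<-\tfrac{3}{10}$ (Lemma~\ref{nu0andnu0prime}), this gives $\mu'\le\mu'(0)<0$ on $[0,\infty)$, hence strict monotonicity and $\mu(s)\to-\infty$ in one stroke. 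Your boundary-layer asymptotic $\partial_r w_s(1)\sim-\sqrt{s}$ would salvage the existence of a zero (since then $\mu(s)\sim -s+2\sqrt{s}\to-\infty$), but not the monotonicity on the whole half-line; the concavity argument is what is actually needed.
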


The proof of Proposition \ref{Cyl4} is given after some preparation. As a first step towards it, we present another representation of the eigencurve profile $[s \mapsto \mu(s)]$, which includes the case $s=0$. We also compute the derivative of the eigencurve profile.
\begin{lem}\label{Cyl5}
The eigencurve profile $\mu$ may equivalently be written as 
\begin{align}
 \mu(s) = -s +3+ 2\, \partial_r h_s(1)\,, \qquad s \in (0,\infty)\label{nualt}
\end{align}
where $h_s \in W^2_{2,D}(-1,1)$ solves 
\begin{align}
 \begin{cases}
&\begin{displaystyle} -\frac{1}{r} \partial_r \big ( r \,\partial_r h_s \big )  +s \, h_s = \frac{-2}{r^3} + s \, \frac{2-r}{r}\,, \end{displaystyle}\\
  &h_s(1)=h_s(2)=0\,.
 \end{cases}\label{eqCyl13a}
\end{align}
This representation holds even for $s > - \xi_0$ with $\xi_0>0$ from \eqref{ewxi}. In particular, $\mu \in C^\infty \big ((-\xi_0,\infty), \mathbb{R} \big )$ with 
\begin{align}
\mu^\prime(s)= -1 + 2 \,\partial_r  p_s(1)\,, \label{eqCyl13b}
\end{align}
 where $p_s \in W^2_{2,D}(-1,1)$ solves
\begin{align}
 \begin{cases}
  &\begin{displaystyle} -\frac{1}{r} \partial_r \big ( r \,\partial_r p_s \big )  +s\, p_s= \frac{2-r}{r} - h_s  \,, \end{displaystyle}\\
  &p_s(1)=p_s(2)=0\,.
 \end{cases} \label{eqCyl13c}
\end{align}
\end{lem}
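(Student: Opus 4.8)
The plan is to connect the two expressions for $\mu(s)$ via the Fourier expansion already established in Lemma \ref{Cyl2}. First I would expand the solution $h_s$ of \eqref{eqCyl13a} in the orthonormal basis $\{\rho_k\}$ of $L_{2,r}(1,2)$: writing $h_s = \sum_k \beta_k \rho_k$ and testing \eqref{eqCyl13a} against $\rho_k$ in $L_{2,r}(1,2)$, the left-hand side produces $(\xi_k + s)\beta_k$ because $\rho_k$ is an eigenfunction of $-\frac1r\partial_r(r\partial_r\,\cdot\,)$ with eigenvalue $\xi_k$, while the right-hand side produces $c_k + s\,d_k$ with $c_k, d_k$ exactly the coefficients from Lemma \ref{Cyl2}. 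Hence $\beta_k = (c_k + s\,d_k)/(\xi_k + s)$, and differentiating the series termwise at $r=1$ gives $\partial_r h_s(1) = \sum_k \frac{c_k + s\,d_k}{\xi_k+s}\,\partial_r\rho_k(1)$, which is precisely the bracketed sum in \eqref{nudef}; substituting into \eqref{nudef} yields \eqref{nualt}. I should justify that the series for $h_s$ converges in $W^2_{2}(1,2)$ (so that the termwise trace of $\partial_r$ at $r=1$ is legitimate) — this follows since the right-hand side of \eqref{eqCyl13a} lies in $L_{2,r}(1,2)$ and $-\frac1r\partial_r(r\partial_r\,\cdot\,)+s$ is an isomorphism onto $L_{2,r}(1,2)$ for $s > -\xi_0$, using \eqref{ewxi} and the compact embedding.

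Next I would address the extension to $s > -\xi_0$. The point is simply that the operator $-\frac1r\partial_r(r\partial_r\,\cdot\,)+s : W^2_{2,D}(1,2)\to L_{2,r}(1,2)$ is invertible as long as $s$ avoids the negatives of the eigenvalues $\xi_k$, i.e. for all $s>-\xi_0$ (indeed for all $s\neq-\xi_k$), so $h_s$ is well-defined there and the series computation above goes through verbatim. In particular, taking $s=0$, the equation \eqref{eqCyl13a} is solvable (since $0>-\xi_0$), so \eqref{nualt} furnishes a value $\mu(0) = 3 + 2\partial_r h_0(1)$, showing $\mu$ extends continuously — in fact smoothly — to $s=0$ as claimed. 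For the smoothness statement $\mu\in C^\infty((-\xi_0,\infty),\mathbb{R})$, I would invoke analytic (or at least smooth) dependence of $h_s$ on the parameter $s$: the map $s\mapsto \big(-\frac1r\partial_r(r\partial_r\,\cdot\,)+s\big)^{-1}$ is analytic in $\mathcal{L}(L_{2,r}(1,2),W^2_{2,D}(1,2))$ on $(-\xi_0,\infty)$ (the right-hand side of \eqref{eqCyl13a} also depends affinely, hence analytically, on $s$), and the trace-of-$\partial_r$ map $W^2_2(1,2)\to\mathbb{R}$, $f\mapsto \partial_r f(1)$, is bounded linear; composing gives $s\mapsto\partial_r h_s(1)$ smooth, whence $\mu$ is smooth.

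For the derivative formula \eqref{eqCyl13b}, I would differentiate \eqref{eqCyl13a} with respect to $s$. Formally, $q_s := \partial_s h_s$ satisfies $-\frac1r\partial_r(r\partial_r q_s) + s\,q_s + h_s = \frac{2-r}{r}$ with homogeneous Dirichlet data (the boundary conditions are $s$-independent), i.e. $q_s$ solves exactly the problem \eqref{eqCyl13c} defining $p_s$; by uniqueness $q_s = p_s$. To make this rigorous rather than formal, I would either differentiate the convergent Fourier series for $h_s$ coefficientwise — the $k$-th coefficient $(c_k+s\,d_k)/(\xi_k+s)$ has $s$-derivative $\big(d_k(\xi_k+s)-(c_k+s\,d_k)\big)/(\xi_k+s)^2 = (d_k\xi_k - c_k)/(\xi_k+s)^2$, which one checks is the $k$-th coefficient of $p_s$ — or differentiate the operator identity $\big(-\frac1r\partial_r(r\partial_r\,\cdot\,)+s\big)h_s = \text{RHS}(s)$ using the product rule for the analytic maps involved. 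Then $\mu'(s) = -1 + 2\,\partial_r(\partial_s h_s)(1) = -1 + 2\,\partial_r p_s(1)$, which is \eqref{eqCyl13b}. The main obstacle is bookkeeping: ensuring the Fourier series for $h_s$ (and its termwise $s$-derivative) converges in a norm strong enough that taking $\partial_r(\,\cdot\,)(1)$ commutes with summation — this is handled by the $W^2_2$-convergence coming from invertibility of the shifted operator and continuity of the trace, but it must be stated carefully, and one must also note that $(c_k), (d_k)\in\ell_2$ together with $\xi_k\to\infty$ gives the needed decay.
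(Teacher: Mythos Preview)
Your argument is correct, but for part (i) you take a different route from the paper. You expand $h_s$ directly in the radial eigenbasis $\{\rho_k\}$, read off the coefficients $\beta_k=(c_k+s\,d_k)/(\xi_k+s)$, and match the resulting series for $\partial_r h_s(1)$ with the definition \eqref{nudef}. The paper instead works through the two-dimensional problem: for $s=\sigma^2\nu_0$ it observes that the separation-of-variables ansatz $f_s(z,r)=h_s(r)\varphi_0(z)$ solves $(-\Delta_{cyl,D})f_s=(-2/r^3+s(2-r)/r)\varphi_0$, then reads off $\mu(s)$ from the eigenvalue relation $(DF(0)+\lambda_{cyl}Dg(0))\varphi_0=\mu(\sigma^2\nu_0)\varphi_0$ via formula \eqref{lo}. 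Your route is more self-contained (it never leaves the interval $(1,2)$ and links the two formulas for $\mu$ by a single Fourier computation), but it carries the burden of justifying termwise evaluation of $\partial_r(\,\cdot\,)(1)$ on the series for $h_s$, which you correctly flag; the paper's route sidesteps this by invoking the already-established operator identity \eqref{lo} and the uniqueness of the eigenvalue. For part (ii) --- the extension to $s>-\xi_0$, smoothness, and the derivative formula via $p_s=\partial_s h_s$ --- your argument and the paper's coincide.
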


\begin{proof}
{\bf(i)} We derive the alternative formula \eqref{nualt} for $\mu$: For $s \in (0,\infty)$, we find $\sigma \in (0,\infty)$ such that $s = \sigma^2 \nu_0$. Let us note that the solution $f_s$ to 
\begin{align*}
(-\Delta_{cyl,D})f_s &= - \frac{2}{r^3} \varphi_0 - \sigma^2 \frac{2-r}{r} \partial_z^2\varphi_0 \nonumber \\
&= \Big (- \frac{2}{r^3} + s\, \frac{2-r}{r} \Big )\, \varphi_0 
\end{align*}
with $\varphi_0$ denoting the first eigenfunction of the one-dimensional Dirichlet-Laplacian can be written in the form 
\begin{align}
f_s(z,r)= h_s(r)\,\varphi_0(z) \label{eqCyl13}
\end{align}
with $h_s \in C^\infty([1,2])$ solving
\begin{align*}
\begin{cases}
&\begin{displaystyle}-\frac{1}{r} \partial_r \big (r\, \partial_r h_s \big ) + s \, h_s = - \frac{2}{r^3} + s\, \frac{2-r}{r} \,, \quad r \in (1,2)\,, \end{displaystyle} \\
&h_s(1)=h_s(2)=0\,.
\end{cases}
\end{align*}
We now derive from the relation $s = \sigma^2 \nu_0$, \eqref{lo} and \eqref{eqCyl13} that 
\begin{align*}
 \big (DF(0)+ \lambda_{cyl} Dg(0) \big ) \varphi_0 &=      (-s+3) \varphi_0 + 2\, \partial_r f_s(\, \cdot \,,1)
 \\&=\big  (-s + 3 + 2\, \partial_r h_s(1) \big ) \varphi_0\,.
\end{align*}
Combining this with Lemma \ref{Cyl3} yields
$$\mu(s) =-s + 3 + 2\, \partial_r h_s(1) \,, \qquad s \in (0,\infty)\,,$$
which is formula \eqref{nualt}.\\
{\bf(ii)} Note that the operator $-\frac{1}{r} \partial_r \big ( r \partial_r \, \cdot \,) + s$ is invertible for each $s \in (-\xi_0, \infty)$.
Because the right-hand side of \eqref{eqCyl13a} depends smoothly on $s$, and taking the inverse is a smooth operation, it follows that $\mu \in C^\infty \big ( (-\xi_0,\infty), \mathbb{R})$. Moreover, its derivative is given by 
\begin{align*}
 \mu^\prime(s)= -1 +2 \,\partial_r p_s (1)\,,
\end{align*}
where $p_s:= \partial_s h_s \in W^2_{2,D}(-1,1)$. Finally, we note that taking the derivative of both sides of \eqref{eqCyl13a} with respect to $s$ results in \eqref{eqCyl13c}. This shows the remaining formula \eqref{eqCyl13b} for $\mu^\prime$.
\end{proof}

\begin{bem}
 Note that the solution $h_s$ to \eqref{eqCyl13a} can be expressed in terms of Bessel functions of the first and second kind. Nevertheless, this expression is lengthy, and we were not able to deduce properties of the eigencurve profile from it. \\
\end{bem}

For the special case $s=0$, it is possible to give explicit formulas for $\mu(0)$ and $\mu^\prime(0)$: 

\begin{lem}\label{nu0andnu0prime}
The values $\mu(0)$ and $\mu^\prime(0)$ are given by
 \begin{align*}
 \mu(0)= -1+ \frac{2}{\ln(2)} >0
 \end{align*}
and
\begin{align*}
 \mu^\prime(0)=-2+ \frac{3}{2\ln(2)^2} - \frac{1}{\ln(2)} < -\frac{3}{10}\,.
\end{align*}

\end{lem}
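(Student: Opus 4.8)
The plan is to compute $h_0$ and $p_0$ explicitly by integrating the ordinary differential equations \eqref{eqCyl13a} and \eqref{eqCyl13c} at $s=0$, and then to read off $\mu(0)$ and $\mu^\prime(0)$ from the representations \eqref{nualt} and \eqref{eqCyl13b} established in Lemma \ref{Cyl5}.

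First I would set $s=0$ in \eqref{eqCyl13a}, obtaining $-\tfrac1r \partial_r(r\,\partial_r h_0) = -2/r^3$ on $(1,2)$ with $h_0(1)=h_0(2)=0$. Integrating twice gives $h_0(r) = \tfrac{2}{r} + c_1 \ln r + c_2$, and the boundary conditions force $c_2 = -2$ and $c_1 = 1/\ln(2)$, so $h_0(r) = \tfrac{2}{r} + \tfrac{\ln r}{\ln(2)} - 2$ and $\partial_r h_0(1) = -2 + \tfrac{1}{\ln(2)}$. Substituting into \eqref{nualt} at $s=0$ yields $\mu(0) = 3 + 2\,\partial_r h_0(1) = -1 + \tfrac{2}{\ln(2)}$, and positivity is immediate since $0 < \ln(2) < 1$ gives $\tfrac{2}{\ln(2)} > 2 > 1$.

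Next I would plug this $h_0$ into the right-hand side of \eqref{eqCyl13c} at $s=0$: a short computation gives $\tfrac{2-r}{r} - h_0(r) = 1 - \tfrac{\ln r}{\ln(2)}$, so $p_0$ solves $-\tfrac1r \partial_r(r\,\partial_r p_0) = 1 - \tfrac{\ln r}{\ln(2)}$ with $p_0(1)=p_0(2)=0$. Integrating twice (using $\int r\ln r\,\mathrm{d}r = \tfrac{r^2}{2}\ln r - \tfrac{r^2}{4}$) and imposing the two boundary conditions determines the integration constants and produces $\partial_r p_0(1) = -\tfrac12 - \tfrac{1}{2\ln(2)} + \tfrac{3}{4\ln(2)^2}$. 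Then \eqref{eqCyl13b} gives $\mu^\prime(0) = -1 + 2\,\partial_r p_0(1) = -2 - \tfrac{1}{\ln(2)} + \tfrac{3}{2\ln(2)^2}$, which is the asserted formula.

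Finally, the estimate $\mu^\prime(0) < -3/10$ is, after multiplying through by $2\ln(2)^2 > 0$, equivalent to $\tfrac{17}{5}\ln(2)^2 + 2\ln(2) > 3$; since the positive root of $t \mapsto \tfrac{17}{5}t^2 + 2t - 3$ is $\tfrac{5}{34}\big(-2 + \sqrt{44.8}\big) < 0.6902$, the inequality follows from any rational lower bound of the form $\ln(2) > 0.691$ (obtainable, e.g., from finitely many terms of the series $\ln 2 = \sum_{n\ge1} \tfrac{1}{n2^n}$, or from $e^{0.691} < 2$). The one mildly delicate point in the whole argument is precisely this last estimate: the gap between $\ln(2) \approx 0.6931$ and the threshold $\approx 0.6902$ is narrow enough that a crude bound such as $\ln(2) > 2/3$ does not suffice, so a couple of decimal digits must be retained. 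Everything else is elementary integration of two linear second-order ODEs.
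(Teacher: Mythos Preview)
Your proof is correct and follows essentially the same route as the paper: both compute $h_0$ and $p_0$ explicitly by integrating the linear ODEs \eqref{eqCyl13a} and \eqref{eqCyl13c} at $s=0$ and then read off $\mu(0)$ and $\mu'(0)$ from \eqref{nualt} and \eqref{eqCyl13b}. Your argument is in fact slightly more detailed than the paper's, since the paper simply asserts the numerical bound $\mu'(0)<-3/10$ whereas you supply a justification via the quadratic in $\ln(2)$.
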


\begin{proof}
 {\bf(i)} For $\mu(0)$, we note that \eqref{eqCyl13a} with $s=0$ reads
\begin{align*}
 \begin{cases}
&\begin{displaystyle} -\frac{1}{r} \partial_r \big ( r \,\partial_r h_0 \big )   = \frac{-2}{r^3}\,,  \end{displaystyle}\\
  &h_0(1)=h_0(2)=0\,.
 \end{cases}
\end{align*}
 This equation is solved by
 \begin{align}
 h_0(r)= \frac{2-r}{r} + \frac{\ln(r)}{\ln(2)} -1 \label{eqCyl14a}
 \end{align}
with derivative
 \begin{align}
 \partial_r h_0(1)= -2+ \frac{1}{\ln(2)}\,. \label{eqCyl14aa}
 \end{align}
Hence, equation \eqref{nualt} yields
 \begin{align*}
 \mu(0)= -1+ \frac{2}{\ln(2)} > 0 \,.
 \end{align*}
{\bf(ii)} For $\mu^\prime(0)$, we first recall from \eqref{eqCyl13b} that
\begin{align*}
\mu^\prime(0)= -1+ 2\,\partial_r p_{0}(1)\,.
\end{align*}
 The function $p_0$ solves 
\begin{align*}
 \begin{cases}
  &\begin{displaystyle} -\frac{1}{r} \partial_r \big ( r \,\partial_r p_0 \big )  = 1- \frac{\ln(r)}{\ln(2)}\,, \end{displaystyle}\\
  &p_0(1)=p_0(2)=0\,,
 \end{cases}
\end{align*} 
which is \eqref{eqCyl13c} with $s=0$ and the inserted expression for $h_0$ from \eqref{eqCyl14a}. This equation has the explicit solution
\begin{align*}
p_0(r) =  \displaystyle \bigg ( \frac{3-\ln(2)}{4\ln(2)^2} \bigg ) \ln(r) +   \frac{1+\ln(2)}{4\ln(2)}   + \frac{r^2 \,\ln(r/2)-r^2}{4\ln(2)}
\end{align*}
with 
\begin{align*}
\partial_r p_0(1) &= \displaystyle \frac{3-\ln(2)}{4\ln(2)^2} + \frac{-2\ln(2) +1-2}{4\ln(2)} \\
&= \displaystyle \frac{3}{4 \ln(2)^2}- \frac{1}{2\ln(2)} - \frac{1}{2} \,.
\end{align*}
Plugging $\partial_r p_0(1)$ into the formula for $\mu^\prime(0)$ yields the assertion.
\end{proof}

Based on this preparation, we provide a proof that $[s \mapsto \mu(s)]$ is strictly decreasing on $[0,\infty)$ and has exactly one zero.\\

{\bf Proof of Proposition \ref{Cyl4}.}
 As $\mu$ is smooth with $\mu(0)>0$ as well as $\mu^\prime(0)<0$ by Lemma \ref{Cyl5} and Lemma \ref{nu0andnu0prime}, it is enough to show that $[s \mapsto \mu^\prime(s) ]$ is decreasing for $s \geq 0$. We will achieve that by applying the weak maximum principle several times.\\
{\bf(i)} First, we apply it to $[s \mapsto h_s]$, where we recall from \eqref{eqCyl13a} that $h_s$ solves 
\begin{align*}
 \begin{cases}
&\begin{displaystyle} -\frac{1}{r} \partial_r \big ( r \,\partial_r h_s \big )  +s \, h_s = \frac{-2}{r^3} + s \, \frac{2-r}{r}\,, \end{displaystyle}\\
  &h_s(1)=h_s(2)=0\,.
 \end{cases}
\end{align*}
Because $\frac{2-r}{r}$ solves 
\begin{align*}
\begin{cases}
&\displaystyle-\frac{1}{r} \partial_r \big (r\, \partial_r f(r) \big ) =- \frac{2}{r^3} \,, \\
&f(1)= 1 \,, \quad f(2) = 0 \,,
\end{cases}
\end{align*}
it follows that $\frac{2-r}{r}-h_s$ is a solution to 
\begin{align*}
\begin{cases}
&-\displaystyle\frac{1}{r} \partial_r \big (r\, \partial_rf(r) \big ) + s \, f(r)= 0 \,,\\
&f(1)=1 \,, \quad f(2)=0 \,.
\end{cases}
\end{align*}
An application of the weak maximum principle yields 
\begin{align}
h_s  \leq  \frac{2-r}{r}\,, \qquad s \geq 0\,. \label{eqCyl14}
\end{align} 
For $s>\tilde{s} \geq 0$, the difference $h_{s} - h_{\tilde{s}}$ solves 
\begin{align*}
\begin{cases}
&\displaystyle - \frac{1}{r} \partial_r \big (r \partial_r f(r)\big ) +  s f(r)   =  ( s-\tilde{s})\Big( \frac{2-r}{r} -h_{\tilde{s}}(r) \Big) \,, \\
&f(1)=f(2)=0\,,
\end{cases}
\end{align*}
where the right-hand side is non-negative thanks to \eqref{eqCyl14}. Consequently, the weak maximum principle yields
\begin{align}
h_s  \geq h_{\tilde{s}} \,, \qquad s > \tilde{s} \geq 0\,. \label{eqCyl15}
\end{align}
{\bf(ii)} Now, we apply the weak maximum principle for $s \geq 0$ to the solution $p_s=\partial_s h_s$ to \eqref{eqCyl13c}, i.e. to
\begin{align*}
\begin{cases}
  &\begin{displaystyle} -\frac{1}{r} \partial_r \big ( r \,\partial_r p_s \big )  +s\, p_s= \frac{2-r}{r} - h_s  \,, \end{displaystyle}\\
  &p_s(1)=p_s(2)=0\,.
 \end{cases}
\end{align*}
Due to \eqref{eqCyl14}, the right-hand side of this equation is non-negative and the weak maximum principle yields $p_s \geq 0$. For $s > \tilde{s} \geq 0$, we then find that $p_s - p_{\tilde{s}}$ solves
\begin{align*}
\begin{cases}
  &\begin{displaystyle} -\frac{1}{r} \partial_r \big ( r \,\partial_r f \big )  +s\, f= (h_{\tilde{s}} - h_s )+(\tilde{s}-s)p_{\tilde{s}}\,,  \end{displaystyle}\\
  &f(1)=f(2)=0
 \end{cases}
\end{align*}
with the non-positive right-hand side thanks to $p_{\tilde{s}} \geq 0$ and \eqref{eqCyl15}. Applying the weak maxiumum principle once more, we see that $p_s-p_{\tilde{s}}$ attains its maximum at $r=1$ and hence 
\begin{align*}
\partial_r p_s (1) \leq \partial_r p_{\tilde{s}}(1) \,, \qquad s > \tilde{s} \geq 0\,,
\end{align*}
which shows $\mu^\prime(s) \leq \mu^\prime(\tilde{s})$ for $s > \tilde{s} \geq 0$ as claimed.
\qed\\

\begin{bem}\label{cylincreasing} Since $h_s(1)=h_{\tilde{s}}(1)=0$ and $h_s - h_{\tilde{s}} \geq 0$ on $[1,2]$ for $s > \tilde{s} \geq 0$ by   \eqref{eqCyl15}, it follows that $[s \mapsto \partial_r h_s(1)]$ is increasing. 
\end{bem}

Through the relation
$$\mu_j(\sigma) = \mu(\sigma^2 \nu_j )\,, \qquad j \in \mathbb{N}\,,$$
where $\mu_j(\sigma)$ are the eigenvalues of the linearized operator $DF(0)+\lambda_{cyl}Dg(0)$, we can derive properties of its spectrum from properties of its eigencurve profile:
 
\begin{lem}\label{Cyl3aa}
The eigenvalues of $DF(0)+\lambda_{cyl}Dg(0)$ are ordered 
\begin{align*}
\mu_0(\sigma) > \mu_1(\sigma) > \dots > \mu_j(\sigma) > \mu_{j+1}(\sigma)> \dots \,, \qquad j \in \mathbb{N}\,,
\end{align*}
and have geometric multiplicity $1$.
\end{lem}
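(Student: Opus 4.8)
The plan is to read off both assertions directly from the strict monotonicity of the eigencurve profile established in Proposition \ref{Cyl4}, combined with the Fourier representation of Lemma \ref{Cyl2}. First, recall from Section \ref{NP} that $\nu_j = (j+1)^2\pi^2/4$ is strictly increasing in $j$, so that for every fixed $\sigma>0$ one has $\sigma^2\nu_0 < \sigma^2\nu_1 < \dots < \sigma^2\nu_j < \sigma^2\nu_{j+1} < \dots$. Since Proposition \ref{Cyl4} tells us that $[s \mapsto \mu(s)]$ is strictly decreasing on $[0,\infty)$, applying it along this increasing sequence of arguments yields $\mu_0(\sigma) = \mu(\sigma^2\nu_0) > \mu(\sigma^2\nu_1) = \mu_1(\sigma) > \dots$, which is exactly the claimed ordering; in particular, the eigenvalues $\mu_j(\sigma)$ are pairwise distinct.

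For the statement about geometric multiplicity, I would argue as follows. Let $\mu$ be an eigenvalue of $DF(0)+\lambda_{cyl}Dg(0)$ with eigenfunction $v \in W^2_{q,D}(-1,1)$. Using $W^2_{q,D}(-1,1)\hookrightarrow L_2(-1,1)$ and expanding $v$ in the orthonormal basis $\{\varphi_j\}_{j\in\mathbb{N}}$ of $L_2(-1,1)$, write $v = \sum_j (v\,\vert\,\varphi_j)_{L_2}\,\varphi_j$, and apply the operator via its Fourier representation from Lemma \ref{Cyl2}; since each $\varphi_j$ is an eigenfunction with eigenvalue $\mu_j(\sigma)$ by Lemma \ref{Cyl3}, this gives $(DF(0)+\lambda_{cyl}Dg(0))v = \sum_j \mu_j(\sigma)(v\,\vert\,\varphi_j)_{L_2}\,\varphi_j$ in $L_2(-1,1)$. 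Comparing with $\mu v = \sum_j \mu\,(v\,\vert\,\varphi_j)_{L_2}\,\varphi_j$ and invoking that $\{\varphi_j\}$ is a basis forces $(\mu_j(\sigma)-\mu)\,(v\,\vert\,\varphi_j)_{L_2}=0$ for all $j\in\mathbb{N}$. By the strict ordering just established, there is at most one index $j$ with $\mu_j(\sigma)=\mu$, hence $v$ is a scalar multiple of the corresponding $\varphi_j$, and the eigenspace is one-dimensional.

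I do not expect a genuine obstacle here: both assertions are corollaries of Proposition \ref{Cyl4} and Lemma \ref{Cyl2}. The only point deserving a word of care is the justification that an eigenfunction, a priori only known to belong to $W^2_{q,D}(-1,1)$, may be treated through the $L_2$-Fourier expansion and through the series action of the operator in Lemma \ref{Cyl2}; this is legitimate precisely because $\{\varphi_j\}$ is an orthonormal basis of $L_2(-1,1)$ and the representation in Lemma \ref{Cyl2} is valid for all $v \in W^2_{q,D}(-1,1)$ with $q\geq 2$.
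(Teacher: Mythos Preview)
Your argument is correct and follows the same route as the paper. The paper's own proof is even terser: it only records that $[s\mapsto\mu(s)]$ is strictly decreasing by Proposition~\ref{Cyl4} and that $(\nu_j)$ is strictly increasing, concluding that the $\mu_j(\sigma)$ are strictly decreasing; the geometric simplicity is left implicit. Your Fourier-coefficient comparison makes this last point explicit and is exactly the right justification, so your write-up is, if anything, more complete than the paper's.
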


\begin{proof}
Because $[s \mapsto \mu(s)]$ is strictly decreasing by Proposition \ref{Cyl4} and the eigenvalues of the one-dimensional Dirichlet-Laplacian $(\nu_j)$ are strictly increasing, the eigenvalues $(\mu_j(\sigma))$ are strictly decreasing.
\end{proof}
 
Further properties of the eigenvalues $\mu_j(\sigma)$, following from properties of the eigencurve profile $[s\mapsto\mu(s)]$, are derived within the proofs in the next section:
\end{section}

\color{black}

\begin{section}{Proofs of the Main Results}\label{Sec4}

\begin{subsection}{Existence}\label{SectionECyl}
 We establish existence of stationary solutions for $\lambda$ close to $\lambda_{cyl}$:\\
 
{\bf Proof of Theorem \ref{exstatsLcyl}.}
Recall that
 \begin{align*}
S=\big \lbrace w \in W^2_{q,D}(-1,1) \ \big \vert \, -1 < w < 1\big \rbrace \,.
 \end{align*}
In the following, we want to resolve equation \eqref{statlambda1a}, that is $F(w) + \lambda g(w)=0$  
with $F$ from \eqref{CatFa}, locally around $(w, \lambda)=(0,\lambda_{cyl})$. Because $F$ and $g$ (see Proposition \ref{propana}) are analytic from $S$ to $L_q(-1,1)$ and the spectrum of 
$DF(0)+\lambda_{cyl} Dg(0)$ consists only of eigenvalues,
this is possible if and only if $0$ is no eigenvalue of $DF(0)+\lambda_{cyl} Dg(0)$. For $j\in \mathbb{N}$, we have 
\begin{align*}
 \sigma^2 \neq \frac{4 \,s_0}{\pi^2 \,(j+1)^2} \qquad \Longleftrightarrow \qquad \sigma^2 \nu_j \neq s_0  \qquad \Longleftrightarrow \qquad \mu_j(\sigma)=\mu(\sigma^2 \nu_j) \neq 0,
\end{align*}
and the implicit function theorem (in the form \cite[Theorem 4.5.4]{BT03}) is applicable. It yields some $\delta >0$ and an analytic function 
\begin{align*}
[\lambda \mapsto u^{\lambda}_{cyl}] &: (\lambda_{cyl}-\delta,\lambda_{cyl}+\delta) \rightarrow W^2_{q,D}(-1,1) \,, \ \qquad u_{cyl}^{\lambda_{cyl}}=0
\end{align*}
such that $u^\lambda_{cyl}$ is a solution to \eqref{statlambda1a} for each $\lambda \in (\lambda_{cyl}-\delta,\lambda_{cyl}+\delta)$ with
 \begin{align*}
\Vert u^\lambda_{cyl} \Vert_{W^2_{q,D}(-1,1)} < \delta\,.
\end{align*}
Additionally, if $u$ solves \eqref{statlambda1a} for some $\lambda \in (\lambda_{cyl}-\delta,\lambda_{cyl}+\delta)$ with
\begin{align}
\Vert u  \Vert_{W^2_{q,D}(-1,1)}  < \delta\,, \label{impunicyl}
\end{align}
then $u = u^\lambda_{cyl}$. From the uniqueness of the electrostatic potential in \eqref{eq2}, we deduce that $\big [z \mapsto u^\lambda_{cyl}(-z) \big ]$ is a second solution to \eqref{statlambda1a} having the same $W^2_q$-distance to $0$ as $u^\lambda_{cyl}$. Hence, it follows from \eqref{impunicyl} that $u^\lambda_{cyl}(-z)=u^\lambda_{cyl}(z)$. As a consequence, the electrostatic potential $\psi_{u^\lambda_{cyl}}$ is symmetric with respect to the $r$-axis.
\qed

\end{subsection}

\begin{subsection}{Stability}\label{SectionSCyl}

We want to apply the (rigorous) principle of linearized stability to establish Theorem \ref{stability2}. We roughly follow \cite{AGM15, EM11b, EM11a, LS13}.\\

For a solution $u \in W^2_{q,D}(-1,1)$ to the dynamical version of \eqref{eq1}-\eqref{eq3}, see \eqref{dyn}, with initial value $u_0$ close to $u^\lambda_{cyl}$, we put $v:=u-u_{cyl}^\lambda$. Then, $v$ satisfies the linearized equation
\begin{align}
\partial_t v- \big (DF(u^\lambda_{cyl})+\lambda Dg(u^\lambda_{cyl}) \big ) v &= F(u^\lambda_{cyl}+v)-F(u^\lambda_{cyl})-DF(u^\lambda_{cyl})v \nonumber \\
&+\lambda \big ( g(u^\lambda_{cyl}+v) -g(u^\lambda_{cyl}) -Dg(u^\lambda_{cyl})v \big )=:G_{cyl}(v). \label{eqCyl6}
\end{align}
Thanks to Proposition \ref{propana}, we have $G_{cyl} \in C^\infty \big (\mathcal{O}, L_q(-1,1) \big )$ for a small neighbourhood $\mathcal{O}$ of $0$ in $W^2_{q,D}(-1,1)$ satisfying $G_{cyl}(0)=0$ as well as $DG_{cyl}(0)=0$. \\

First, we study the stability of the cylinder $u^{\lambda_{cyl}}_{cyl}=0$:

\begin{lem}\label{stability2a}
Let $q \in (2,\infty)$ and $\lambda=\lambda_{cyl}$. Then, the following holds: \\
 {\bf(i)} If $\sigma < \sigma_{cyl}$, then the stationary solution $u=0$ to \eqref{eq1}-\eqref{eq3} is unstable in $W^2_{q,D}(-1,1)$.\\
 {\bf(ii)} If $\sigma > \sigma_{cyl}$, then the stationary solution $u=0$ to \eqref{eq1}-\eqref{eq3} is exponentially asymptotically stable in $W^2_{q,D}(-1,1)$.
\end{lem}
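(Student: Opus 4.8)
The plan is to deduce the stability of the cylinder $u=0$ directly from the spectrum of the linearization $A:=DF(0)+\lambda_{cyl}Dg(0)$ --- which has already been completely determined in Sections~\ref{Sec2} and~\ref{SectionCylinder} --- via the principle of linearized stability. For $\lambda=\lambda_{cyl}$ we have $u^{\lambda_{cyl}}_{cyl}=0$, so writing $v:=u$ for a solution $u$ of the dynamical problem \eqref{dyn}, equation \eqref{eqCyl6} reads $\partial_t v - Av = G_{cyl}(v)$ on the neighbourhood $\mathcal{O}$ of $0$ in $W^2_{q,D}(-1,1)$, with $G_{cyl}\in C^\infty(\mathcal{O},L_q(-1,1))$, $G_{cyl}(0)=0$ and $DG_{cyl}(0)=0$. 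By Proposition~\ref{Cyl1}, $A$ is sectorial with domain $W^2_{q,D}(-1,1)$; by Lemma~\ref{Cyl3} and Lemma~\ref{Cyl3aa}, its spectrum is the strictly decreasing sequence of simple eigenvalues $\mu_j(\sigma)=\mu(\sigma^2\nu_j)$, $j\in\mathbb{N}$, where $\nu_j=(j+1)^2\pi^2/4$; and since $\mu$ is concave on $[0,\infty)$ (i.e.\ $\mu^\prime$ is decreasing, as shown in the proof of Proposition~\ref{Cyl4}) with $\mu^\prime(0)<0$, these eigenvalues satisfy $\mu_j(\sigma)\le\mu(0)+\mu^\prime(0)\,\sigma^2\nu_j\to-\infty$. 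Hence the spectral bound of $A$ equals its largest eigenvalue $\mu_0(\sigma)=\mu(\sigma^2\pi^2/4)$, and the part of $\sigma(A)$ in $\{\mathrm{Re}\,\lambda\ge0\}$ is always finite.

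The decisive elementary observation is that $\sigma_{cyl}$ in \eqref{thv} is precisely the value at which $\sigma^2\nu_0$ crosses the unique zero $s_0$ of the eigencurve profile, since $\sigma_{cyl}^2\nu_0=(4s_0/\pi^2)(\pi^2/4)=s_0$. In case~(i), $\sigma<\sigma_{cyl}$ forces $\sigma^2\nu_0<s_0$, and the strict monotonicity of $[s\mapsto\mu(s)]$ together with $\mu(s_0)=0$ (Proposition~\ref{Cyl4}) gives $\mu_0(\sigma)=\mu(\sigma^2\nu_0)>0$; thus $A$ has a positive eigenvalue, its unstable spectrum is a nonempty finite set at positive distance from the imaginary axis, and --- using $G_{cyl}\in C^1$ with $DG_{cyl}(0)=0$ --- the instability part of the principle of linearized stability (see \cite[Chapter~9]{Lunardi95}) applied to \eqref{eqCyl6} shows that $u=0$ is unstable in $W^2_{q,D}(-1,1)$ in the sense of \cite{Lunardi95}. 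In case~(ii), $\sigma>\sigma_{cyl}$ forces $\sigma^2\nu_0>s_0$, hence $\mu_0(\sigma)<0$ by the same monotonicity; since $\mu_0(\sigma)$ is the largest eigenvalue we get $\sigma(A)\subset\{\mathrm{Re}\,\lambda\le\mu_0(\sigma)\}$, and choosing any $\omega_0\in(0,-\mu_0(\sigma))$ the stability part of the principle of linearized stability (see \cite[Chapter~9]{Lunardi95}, and \cite{AGM15,EM11b,EM11a,LS13}) yields that $u=0$ is exponentially asymptotically stable in $W^2_{q,D}(-1,1)$.

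I do not anticipate a genuine obstacle in this argument: the substantial work --- analyticity of the electrostatic force (Proposition~\ref{propana}), the generator property of $A$ (Proposition~\ref{Cyl1}), and the complete Fourier/eigencurve description of $\sigma(A)$ with its strict monotonicity (Lemma~\ref{Cyl2}--Lemma~\ref{Cyl3aa}, Proposition~\ref{Cyl4}) --- is already in place, so what remains is essentially bookkeeping. The only points needing care are the verification that the hypotheses of the abstract stability and instability results of \cite[Chapter~9]{Lunardi95} are satisfied in the present setting (a sectorial generator with $D(A)=W^2_{q,D}(-1,1)$, a spectral gap separating the finite unstable part of $\sigma(A)$ from the rest, and a $C^1$ nonlinearity $G_{cyl}$ vanishing to second order at $0$), which can be arranged exactly as in \cite{AGM15,EM11b,EM11a,LS13}. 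Finally, the refined decay estimate and global existence stated in Theorem~\ref{stability2}(ii) go beyond Lemma~\ref{stability2a} and will follow afterwards from the quantitative version of the same abstract result together with \eqref{dyn}.
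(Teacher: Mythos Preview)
Your proposal is correct and follows essentially the same route as the paper: both arguments invoke the principle of linearized stability from \cite[Chapter~9]{Lunardi95} after noting that the generator property (Proposition~\ref{Cyl1}), the form of the nonlinearity in \eqref{eqCyl6}, and the sign of the largest eigenvalue $\mu_0(\sigma)=\mu(\sigma^2\nu_0)$ relative to $\sigma_{cyl}$ are already established. Your write-up is in fact more explicit than the paper's (you spell out why $\mu_j(\sigma)\to-\infty$ and why the unstable part of the spectrum is separated from the imaginary axis), but the underlying argument is the same.
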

\begin{proof}
 Because of \eqref{eqCyl6} and $-\big(DF(0)+\lambda_{cyl}Dg(0) \big)\in\mathcal{H} \big (W^2_{q,D}(-1,1),L_q(-1,1) \big)$ by Proposition \ref{Cyl1}, we can apply the results from \cite{Lunardi95}. The choice of $\sigma_{cyl}$ in \eqref{thv} guarantees that the largest eigenvalue $\mu_0(\sigma)$ of $DF(0)+ \lambda_{cyl} Dg(0)$ satisfies 
\begin{align*}
\mu_0(\sigma)=\mu(\sigma^2 \nu_0)\ \begin{cases} 
&< 0\,, \qquad \sigma > \sigma_{cyl} \,, \\
&> 0 \,, \qquad \sigma < \sigma_{cyl}\,.
\end{cases}
\end{align*}
Hence, the assertion follows from \cite[Theorem 9.1.2, Theorem 9.1.3]{Lunardi95}. 
\end{proof} 

Second, we transfer the (in-)stability of the cylinder to the stationary solutions $u^\lambda_{cyl}$ going through the cylinder. To transfer the instability result, we require that $\mu_0(\sigma)$ is algebraically simple:

\begin{lem}\label{stability2aa}
The eigenvalue $\mu_0(\sigma)$ of $DF(0)+\lambda_{cyl} Dg(0)$ is algebraically simple in the sense of \cite[Definition A.2.7]{Lunardi95}.
\end{lem}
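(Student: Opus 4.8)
The plan is to show that $\mu_0(\sigma)$, viewed as an eigenvalue of the complexified operator $DF(0)+\lambda_{cyl}Dg(0)$ on $L_2(-1,1)$, has both geometric and algebraic multiplicity equal to $1$. Geometric simplicity is already available: by Lemma~\ref{Cyl3aa} (or directly from Lemma~\ref{Cyl3}, since each eigenvalue $\mu_j(\sigma)$ has $\varphi_j$ as an eigenfunction and the $\mu_j(\sigma)$ are strictly ordered), the eigenspace for $\mu_0(\sigma)$ is spanned by $\varphi_0$. So the only thing to rule out is a generalized eigenvector, i.e. a $w\in W^2_{q,D}(-1,1)$ with $\big(DF(0)+\lambda_{cyl}Dg(0)-\mu_0(\sigma)\big)w=\varphi_0$.

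First I would expand such a $w$ in the orthonormal basis $\{\varphi_j\}_{j\in\mathbb N}$ of $L_2(-1,1)$, writing $w=\sum_j a_j\varphi_j$. Since $DF(0)+\lambda_{cyl}Dg(0)$ acts diagonally on this basis by Lemma~\ref{Cyl2} (each $\varphi_j$ is mapped to $\mu_j(\sigma)\varphi_j$), the equation $\big(DF(0)+\lambda_{cyl}Dg(0)-\mu_0(\sigma)\big)w=\varphi_0$ decouples into the scalar relations $\big(\mu_j(\sigma)-\mu_0(\sigma)\big)a_j=\delta_{0j}$ for all $j$. For $j=0$ this reads $0=1$, a contradiction. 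Hence no generalized eigenvector exists, so the generalized eigenspace coincides with $\mathrm{span}\{\varphi_0\}$ and $\mu_0(\sigma)$ is algebraically simple in the sense of \cite[Definition A.2.7]{Lunardi95}. (Equivalently, one may argue via self-adjointness: Lemma~\ref{Cyl3} records that the linearization is symmetric with respect to $(\cdot\mid\cdot)_{L_2}$ on $W^2_{q,D}(-1,1)\hookrightarrow W^2_{2,D}(-1,1)$, and for a self-adjoint operator with compact resolvent geometric and algebraic multiplicities of every eigenvalue agree.)

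The only genuine point requiring care — and the step I would flag as the main obstacle — is the justification that the operator really acts diagonally on all of $W^2_{q,D}(-1,1)$, i.e. that the Fourier expansion and the action of $DF(0)+\lambda_{cyl}Dg(0)$ commute termwise; this is exactly what Lemma~\ref{Cyl2} provides, together with the unconditional convergence of the series for $B_1v$ and $B_2v$ in $L_2(-1,1)$, so once that lemma is invoked the argument is immediate. One should also note that $\varphi_0\in W^2_{q,D}(-1,1)$, so the putative generalized eigenvector equation is posed in the correct space, and that passing to the complexification does not change anything since all eigenvalues and eigenfunctions are real.
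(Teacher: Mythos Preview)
Your proposal is correct and follows essentially the same route as the paper: both reduce algebraic simplicity to showing $\ker(\mu_0-A)^2=\ker(\mu_0-A)$ and verify this via the Fourier diagonalization from Lemma~\ref{Cyl2}. The paper frames this slightly more formally---it first records that $A$ has compact resolvent, hence $\mu_0$ is a pole of the resolvent, and then invokes \cite[Remark A.2.4]{Lunardi95} to reduce semi-simplicity to the kernel equality, which it dispatches with the phrase ``a direct computation using Fourier series''---whereas you actually write out that computation (the $0=1$ contradiction for the $j=0$ coefficient). Your self-adjointness remark is a legitimate shortcut the paper does not mention.
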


\begin{proof}
For simplicity, we put $A:= DF(0)+ \lambda_{cyl} Dg(0)$ and write $\mu_j$ for the $j$-th eigenvalue $\mu_j(\sigma)$ of $A$ throughout this proof. Since $\mu_0$ has geometric multiplicity $1$ due to Lemma \ref{Cyl3aa}, it remains to check that $\mu_0$ is semi-simple. Because $W^2_{q,D}(-1,1)$ is compactly embedded in $L_q(-1,1)$, the operator $A$ has a compact resolvent and \cite[1.19 Corollary]{EN00} ensures that $\mu_0$ is a pole of the resolvent of $A$. Consequently, \cite[Remark A.2.4]{Lunardi95} shows that $\mu_0$ is semi-simple if and only if
\begin{align}\mathrm{ker}  ( \mu_0 - A)^2 = \mathrm{ker}  (\mu_0 -A) = \mathbb{R} \cdot \varphi_0  \,,\label{semisimple}
\end{align}
where $\varphi_0$ is the first eigenvalue of the one-dimensional Dirichlet-Laplacian, see Lemma \ref{Cyl3}. Here, the equality \eqref{semisimple} follows from a direct computation using Fourier series.
\end{proof}

{\bf Proof of Theorem \ref{stability2}.}
For $\sigma \in (0,\infty)$ with $\sigma^2 \neq \displaystyle\frac{4 s_0}{\pi^2(j+1)^2}$ for $j\in \mathbb{N}$, we have
\begin{align*}
\Vert DF(u_{cyl}^\lambda) &+\lambda Dg(u_{cyl}^\lambda) - DF(0)-\lambda_{cyl} Dg(0) \Vert_{\mathcal{L}(W^2_{q,D},L_q)} \\
&\leq \Vert DF(u_{cyl}^\lambda) - DF(0) \Vert_{\mathcal{L}(W^2_{q,D},L_q)} +\lambda \Vert Dg(u^\lambda_{cyl})-Dg(0) \Vert_{\mathcal{L}(W^2_{q,D},L_q)}\\
&\ \ \ +\vert \lambda-\lambda_{cyl} \vert\, \Vert  Dg(0) \Vert_{\mathcal{L}(W^2_{q,D},L_q)} \rightarrow 0 \,,
\end{align*}
as $\lambda \rightarrow \lambda_{cyl}$ by Theorem \ref{exstatsLcyl}. Since the linearized operator $-\big(DF(0)+\lambda_{cyl} Dg(0)\big)$ belongs to $\mathcal{H}\big (W^2_{q,D}(-1,1),L_q(-1,1) \big)$, we deduce from \cite[Theorem I.1.3.1\,(i)]{AmannLQPP} the existence of $\delta > 0$ such that  
$$-\big (DF(u_{cyl}^\lambda) +\lambda Dg(u_{cyl}^\lambda)\big )\in \mathcal{H}\big (W^2_{q,D}(-1,1),L_q(-1,1) \big)\,, \qquad \lambda \in  (\lambda_{cyl}-\delta,\lambda_{cyl}+\delta)\,.$$
We now investigate the stability of $u^{\lambda_{cyl}}_{cyl}$ for $\sigma < \sigma_{cyl}$ and $\sigma > \sigma_{cyl}$ separately:\\
 
{\bf(i)} {\it Instability for $\sigma < \sigma_{cyl}$}: In this case, we know that the first eigenvalue $\mu_0(\sigma)$ of the operator $DF(0)+\lambda_{cyl}Dg(0)$ is positive. 
Because it is also isolated and algebraically simple by Lemma \ref{stability2aa}, the perturbation result \cite[Proposition A.3.2]{Lunardi95} for such eigenvalues allows us to make $\delta > 0$ smaller such that $DF(u_{cyl}^\lambda) +\lambda Dg(u_{cyl}^\lambda)$
also has an eigenvalue with positive real part for $\lambda \in (\lambda_{cyl}-\delta,\lambda_{cyl}+\delta)$. Moreover, 
since the embedding $W^2_{q,D}(-1,1) \hookrightarrow L_q(-1,1)$ is compact, the spectrum of $DF(u_{cyl}^\lambda) +\lambda Dg(u_{cyl}^\lambda)$
consists only of eigenvalues with no finite accumulation point, see \cite[Theorem 6.29]{Kato95}. Thus, there is a constant $C>0$ such that the strip $\big \lbrace \mu \in \mathbb{C} \, \big \vert \, 0 < \mathrm{Re} \mu \, < C \big \rbrace$
is contained in the resolvent set of $DF(u^\lambda_{cyl})+\lambda Dg(u^\lambda_{cyl})$. Applying now \cite[Theorem 9.1.3]{Lunardi95} to \eqref{eqCyl6} shows the instability of $u^\lambda_{cyl}$ for $\sigma < \sigma_{cyl}$. \\

{\bf(ii)} {\it Stability for $\sigma > \sigma_{cyl}$}: Since the spectral bound of $DF(0)+\lambda_{cyl} Dg(0)$ is negative due to the choice $\sigma > \sigma_{cyl}$, it follows from \cite[Corollary I.1.4.3]{AmannLQPP} that we may take $\delta >0$ so small that also $DF(u_{cyl}^\lambda) +\lambda Dg(u_{cyl}^\lambda)$ 
has a negative spectral bound for $\lambda \in (\lambda_{cyl}-\delta,\lambda_{cyl}+\delta)$. Hence, $u^\lambda_{cyl}$ is exponentially asymptotically stable by \cite[Theorem 9.1.2]{Lunardi95}.\\
\qed

\end{subsection}

\begin{subsection}{Direction of Deflection}\label{SectionDoDCyl}

Finally, we turn to Theorem \ref{cyldirectionofdeflection}, which states that the local branch of stationary solutions $[\lambda \mapsto u^\lambda_{cyl}]$ going through the stable cylinder is deflected monotonically outwards if the applied voltage is increased. \\

We start with the Fourier series of the function $\mathbbm{1}:=[z \mapsto 1]$:

\begin{lem}\label{consteig}
The Fourier series of $\mathbbm{1}$ with respect to the $L_2$-eigenbasis of the one-dimensional Dirichlet-Laplacian is 
\begin{align*}
1= \frac{4}{\pi} \sum_{j=0}^\infty \frac{(-1)^j}{(2j+1)} \cos \left(\frac{(2j+1)\pi}{2} z \right ) \,, \qquad z \in (-1,1)\,,
\end{align*}
with (unconditional) convergence in $L_2(-1,1)$.
\end{lem}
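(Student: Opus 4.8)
The plan is to compute the Fourier coefficients of the constant function $\mathbbm{1}$ directly against the orthonormal basis $\{\varphi_j\}_{j\in\mathbb{N}}$ of $L_2(-1,1)$ described in Section \ref{NP}. Recall that $\varphi_j(z)=\cos\big((j+1)\pi z/2\big)$ when $j$ is even and $\varphi_j(z)=\sin\big((j+1)\pi z/2\big)$ when $j$ is odd. Since $\mathbbm{1}$ is even, the inner products $(\mathbbm{1}\mid\varphi_j)_{L_2}$ vanish for odd $j$ by symmetry, so only the even indices contribute; writing $j=2m$ reindexes the surviving terms and accounts for the appearance of $(2m+1)$ in the denominator and the argument $\cos\big((2m+1)\pi z/2\big)$.

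First I would record that $\{\varphi_j\}$ is orthonormal (hence a genuine orthonormal basis, as stated in the preliminaries), so that the abstract Fourier expansion $\mathbbm{1}=\sum_j (\mathbbm{1}\mid\varphi_j)_{L_2}\,\varphi_j$ holds with unconditional convergence in $L_2(-1,1)$; this is exactly the unconditional $L_2$-convergence claimed. Then I would evaluate, for even $j=2m$,
\begin{align*}
(\mathbbm{1}\mid\varphi_{2m})_{L_2}=\int_{-1}^{1}\cos\!\left(\frac{(2m+1)\pi}{2}z\right)\mathrm{d}z
=\left[\frac{2}{(2m+1)\pi}\sin\!\left(\frac{(2m+1)\pi}{2}z\right)\right]_{-1}^{1}
=\frac{4}{(2m+1)\pi}\sin\!\left(\frac{(2m+1)\pi}{2}\right).
\end{align*}
Since $\sin\big((2m+1)\pi/2\big)=(-1)^m$, this gives $(\mathbbm{1}\mid\varphi_{2m})_{L_2}=\dfrac{4(-1)^m}{(2m+1)\pi}$. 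For odd $j$, $\varphi_j$ is odd, so $(\mathbbm{1}\mid\varphi_j)_{L_2}=0$. Substituting these coefficients into the abstract expansion and renaming the summation index $m$ back to $j$ yields precisely
\begin{align*}
1=\frac{4}{\pi}\sum_{j=0}^{\infty}\frac{(-1)^{j}}{2j+1}\cos\!\left(\frac{(2j+1)\pi}{2}z\right),
\end{align*}
with unconditional convergence in $L_2(-1,1)$.

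There is essentially no obstacle here: this is a standard Fourier cosine series computation, and the only points requiring a word of care are the symmetry argument killing the odd-indexed terms and the elementary identity $\sin\big((2m+1)\pi/2\big)=(-1)^m$. The unconditional convergence is immediate from the fact, recalled in Section \ref{NP}, that the $\varphi_j$ form an orthonormal basis of $L_2(-1,1)$.
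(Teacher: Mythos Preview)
Your proposal is correct and follows essentially the same approach as the paper: compute $(\mathbbm{1}\mid\varphi_{2m})_{L_2}$ directly, use $\sin\big((2m+1)\pi/2\big)=(-1)^m$, and dispose of the odd-indexed coefficients by the evenness of $\mathbbm{1}$. The only addition beyond the paper's version is your explicit remark that unconditional $L_2$-convergence follows from $\{\varphi_j\}$ being an orthonormal basis, which the paper leaves implicit.
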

\begin{proof} 

This follows from
\begin{align*}
(\mathbbm{1} \vert \varphi_{2j})_{L_2}&= \int_{-1}^1 \cos \left( \frac{(2j+1) \pi}{2}  \,z \right) \, \mathrm{d} z \\
&= \frac{4}{(2j+1)\pi} \sin \left ( \frac{(2j+1) \pi}{2}  \right) = \frac{4}{\pi} \frac{(-1)^j}{(2j+1)}\,, \qquad j \in \mathbb{N}\,,
\end{align*}
and $(\mathbbm{1} \vert \varphi_{2j+1})_{L_2}=0$ since $\mathbbm{1}$ is even.
\end{proof}

Next, we compute the Fourier series of $\partial_{\lambda}u_{cyl}^{\lambda_{cyl}}$:

\begin{lem}\label{directioncyl3a}
Let $\sigma > \sigma_{cyl}$. Then, the Fourier series of $\partial_\lambda u_{cyl}^{\lambda_{cyl}}$ is
\begin{align}\partial_\lambda u^{\lambda_{cyl}}_{cyl}(z) =\frac{4}{\pi\ln(2)^2} \sum_{j=0}^\infty a_j \cos \bigg ( \frac{(2j+1)\pi}{2} z \bigg )\label{fseries} \end{align}
with coefficients
\begin{align*}
a_j:=\frac{(-1)^j}{(2j+1)\big(-\mu_{2j}(\sigma)\big)}\,, \qquad j \in \mathbb{N}\,,
\end{align*}
and (unconditional) convergence in $C^1\big([-1,1]\big)$. Here, $\mu_{2j}(\sigma)$ denotes the $2j$-th eigenvalue of $DF(0)+\lambda_{cyl} Dg(0)$.
\end{lem}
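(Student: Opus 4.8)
The plan is to read off the Fourier coefficients of $\partial_\lambda u_{cyl}^{\lambda_{cyl}}$ from the representation \eqref{cylabl} and then upgrade the mode of convergence. I set $A:=DF(0)+\lambda_{cyl}Dg(0)$. First I would record that, under the standing assumption $\sigma^2\neq 4s_0/(\pi^2(j+1)^2)$, none of the eigenvalues $\mu_j(\sigma)=\mu(\sigma^2\nu_j)$ vanishes, so by Lemma~\ref{Cyl3} the number $0$ lies in the resolvent set and $A$ is an isomorphism from $W^2_{q,D}(-1,1)$ onto $L_q(-1,1)$ (and, by Proposition~\ref{Cyl1} with $q=2$, also from $W^2_{2,D}(-1,1)$ onto $L_2(-1,1)$). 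For $\sigma>\sigma_{cyl}$ I would further note $\mu_0(\sigma)<0$ by Lemma~\ref{stability2a}, whence $\mu_j(\sigma)<0$ for all $j$ by the strict ordering of Lemma~\ref{Cyl3aa}; in particular $-\mu_{2j}(\sigma)>0$, so the $a_j$ are well defined.

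Next I would compute the coefficients. By \eqref{cylabl}, $w:=\partial_\lambda u_{cyl}^{\lambda_{cyl}}\in W^2_{q,D}(-1,1)$ solves $Aw=-\ln(2)^{-2}\mathbbm{1}$. Pairing this identity with $\varphi_k$ in $L_2(-1,1)$ and using the symmetry of $A$ from Lemma~\ref{Cyl3} together with $A\varphi_k=\mu_k(\sigma)\varphi_k$ yields $\mu_k(\sigma)\,(w\vert\varphi_k)_{L_2}=-\ln(2)^{-2}(\mathbbm{1}\vert\varphi_k)_{L_2}$ for every $k\in\mathbb{N}$. By Lemma~\ref{consteig} the right-hand side is $0$ for odd $k$ and $-\ln(2)^{-2}\tfrac{4}{\pi}\tfrac{(-1)^j}{2j+1}$ for $k=2j$, so dividing by $\mu_k(\sigma)\neq0$ gives $(w\vert\varphi_{2j+1})_{L_2}=0$ and $(w\vert\varphi_{2j})_{L_2}=\tfrac{4}{\pi\ln(2)^2}a_j$ with $a_j$ as in the statement. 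Expanding $w$ in the orthonormal basis $\{\varphi_k\}_{k\in\mathbb{N}}$ of $L_2(-1,1)$ then produces \eqref{fseries}, at first with convergence only in $L_2(-1,1)$.

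The remaining and main point is to improve the convergence to $C^1([-1,1])$, for which I need a quantitative lower bound $|\mu_{2j}(\sigma)|\gtrsim(2j+1)^2$. I would extract it from the proof of Proposition~\ref{Cyl4}, where $[s\mapsto\mu'(s)]$ is shown to be decreasing on $[0,\infty)$: this gives $\mu(s)\le\mu(0)+\mu'(0)\,s$ for $s\ge0$, and since $\mu'(0)<-3/10$ by Lemma~\ref{nu0andnu0prime} there is $s_*>0$ with $\mu(s)\le-s/4$ for all $s\ge s_*$. Because $\nu_{2j}=(2j+1)^2\pi^2/4\to\infty$, for $j$ large one obtains $-\mu_{2j}(\sigma)=-\mu(\sigma^2\nu_{2j})\ge\tfrac{\sigma^2\pi^2}{16}(2j+1)^2$, hence, using $\|\varphi_{2j}\|_{C^1([-1,1])}\le 3(2j+1)$,
\begin{align*}
|a_j|\,\|\varphi_{2j}\|_{C^1([-1,1])}=\frac{\|\varphi_{2j}\|_{C^1([-1,1])}}{(2j+1)\,(-\mu_{2j}(\sigma))}\le\frac{C(\sigma)}{(2j+1)^2}\,.
\end{align*}
Summing over $j$ shows that the series in \eqref{fseries} converges absolutely, and hence unconditionally, in $C^1([-1,1])$; since $C^1([-1,1])\hookrightarrow L_2(-1,1)$, its $C^1$-sum coincides with the $L_2$-sum already identified with $\tfrac{\pi\ln(2)^2}{4}\,\partial_\lambda u_{cyl}^{\lambda_{cyl}}$, which finishes the proof.

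I expect the genuine obstacle to be precisely this last step: neither $\mu(s)<0$ nor the strict decrease of $\mu$ alone suffices for $C^1$-convergence, one really needs the quadratic growth of $|\mu_{2j}(\sigma)|$. The cheap way to get it is the monotonicity of $\mu'$ combined with $\mu'(0)<0$; failing that, one would have to prove a boundary-layer estimate $\partial_r h_s(1)=O(\sqrt{s})$ directly from \eqref{eqCyl13a}, e.g.\ via the barrier $r\mapsto c\,(1-e^{-\sqrt{s}\,(r-1)})$.
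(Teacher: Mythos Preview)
Your argument is correct, but the route you take to $C^1$-convergence is harder than necessary and differs from the paper's. The paper observes that $\partial_\lambda u_{cyl}^{\lambda_{cyl}}\in W^2_{q,D}(-1,1)\hookrightarrow W^2_{2,D}(-1,1)$, and that the eigenfunction expansion of any $W^2_{2,D}(-1,1)$-function in the basis $\{\varphi_j\}$ converges unconditionally in $W^2_2(-1,1)$ (this is the one-dimensional analogue of the fact recorded in Section~\ref{NP} for $W^2_{2,D}(\Omega)$). Since $W^2_2(-1,1)\hookrightarrow C^1([-1,1])$, the $C^1$-convergence is immediate and no quantitative eigenvalue bound is needed at this stage. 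Your concern that ``neither $\mu(s)<0$ nor the strict decrease of $\mu$ alone suffices for $C^1$-convergence'' is therefore misplaced: regularity of the function, not decay of the eigenvalues, does the work.

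That said, your explicit estimate $-\mu_{2j}(\sigma)\gtrsim (2j+1)^2$ via the concavity argument $\mu(s)\le\mu(0)+\mu'(0)s$ is perfectly valid and essentially previews the content of Lemma~\ref{directioncyl3}, where exactly such bounds are established (by a slightly different route) for the positivity argument that follows. So your extra work is not wasted---it just belongs one lemma later.
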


\begin{proof} We write
\begin{align*}
\partial_\lambda u_{cyl}^{\lambda_{cyl}} = \sum_{j=0}^\infty b_j \varphi_j 
\end{align*}
with suitable $b_j \in \mathbb{R}$ and (unconditional) convergence in $W^2_2(-1,1) \hookrightarrow C^1 \big([-1,1]\big)$. We have convergence in $W^2_2(-1,1)$ since $\partial_\lambda u_{cyl}^{\lambda_{cyl}}$ belongs to $W^2_{2,D}(-1,1)$.
We write
\begin{align*}
-[DF(0) + \lambda_{cyl} Dg(0) \big]\partial_\lambda u_{cyl}^{\lambda_{cyl}} = \sum_{j=0}^\infty -\mu_j(\sigma) b_j \varphi_j 
\end{align*}
with $\big (\mu_j(\sigma)\big)_j$ denoting the eigenvalues of $DF(0) + \lambda_{cyl} Dg(0)$, which are all strictly smaller than zero. Based on $\eqref{cylabl}$ and Lemma \ref{consteig}, a comparison of Fourier coefficients in $L_2(-1,1)$ yields the assertion.
\end{proof}

Since each cosine in the series \eqref{fseries} is scaled by an odd multiple of $\pi/2$, a sufficient condition for \eqref{fseries} to be positive is presented in Lemma \ref{directioncyl2} in the appendix: If
\begin{align*}
 C_1:=a_0 - \sum_{j=1}^\infty (2j+1) \vert a_j \vert  \color{black}>0 \,
\end{align*}
with coefficients $a_j$ from Lemma \ref{directioncyl3a}, then 
\begin{align}
 \partial_\lambda u^{\lambda_{cyl}}_{cyl} (z) \geq C_1 \frac{4}{\pi \ln(2)^2}\cos \left ( \frac{\pi}{2} \,z \right )\,, \qquad z \in (-1,1)\,. \label{fseriespos}
\end{align}
Inserting the $a_j$'s, we are left with checking convergence and sign of
\begin{align*}
C_1= \frac{1}{(-\mu_0(\sigma))} - \sum_{j=1}^\infty \frac{1}{(-\mu_{2j}(\sigma))}\,.
\end{align*}
We recall that the eigenvalues $\mu_{2j}(\sigma)$ of $DF(0)+\lambda_{cyl} Dg(0)$ can be written as 
\begin{align*}
\mu_{2j}(\sigma)=\mu(\sigma^2\nu_{2j}) 
\end{align*} with eigencurve profile $[s \mapsto \mu(s)]$ and the eigenvalues of the Dirichlet-Laplacian 
 $$\nu_{2j} = \frac{(2j+1)^2}{4} \pi^2 \,, \qquad j\in \mathbb{N}\,.$$
Upper and lower bounds for the eigenvalues $\mu_{2j}(\sigma)$ are derived from properties of the eigencurve profile $[s \mapsto \mu(s)]$:

\begin{lem}\label{directioncyl3}
Let $\sigma > \sigma_{cyl}$. Then, the eigenvalues of $DF(0)+ \lambda_{cyl}Dg(0)$ satisfy
\begin{align*}
-\sigma^2  \frac{(2j+1)^2}{4} \,\pi^2 < \mu_{2j}(\sigma) <- \frac{3}{10} \frac{\pi^2}{4} \, \sigma^2 \, \big ((2j+1)^2-1\big) \,, \qquad j \in \mathbb{N}.
\end{align*}
\end{lem}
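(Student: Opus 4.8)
The plan is to read off the bounds directly from the qualitative behaviour of the eigencurve profile established in Section~\ref{SectionCylinder}, using the relation $\mu_{2j}(\sigma)=\mu(s_j)$ with $s_j:=\sigma^2\nu_{2j}=\sigma^2(2j+1)^2\pi^2/4$. The two inputs I would rely on are: (a) $[s\mapsto\mu(s)]$ is strictly decreasing and concave on $[0,\infty)$, i.e.\ $\mu'$ is decreasing (Proposition~\ref{Cyl4} and its proof); and (b) the explicit data $\mu(0)=-1+2/\ln(2)>0$ and $\mu'(0)<-\tfrac{3}{10}$ from Lemma~\ref{nu0andnu0prime}. As a preliminary observation I would note that, by the definition \eqref{thv}, the hypothesis $\sigma>\sigma_{cyl}$ is equivalent to $\sigma^2\nu_0>s_0$; since $\nu_{2j}\geq\nu_0$ this gives $s_j\geq\sigma^2\nu_0>s_0$ for every $j$, and strict monotonicity of $\mu$ then yields $\mu_{2j}(\sigma)=\mu(s_j)<\mu(s_0)=0$.

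For the lower bound I would use the representation $\mu(s)=-s+3+2\,\partial_r h_s(1)$ from Lemma~\ref{Cyl5}. By Remark~\ref{cylincreasing} the map $[s\mapsto\partial_r h_s(1)]$ is increasing on $[0,\infty)$, so $\partial_r h_s(1)\geq\partial_r h_0(1)=-2+1/\ln(2)$ by \eqref{eqCyl14aa}, whence $3+2\,\partial_r h_s(1)\geq\mu(0)>0$ for all $s\geq0$. Evaluating at $s=s_j$ gives $\mu_{2j}(\sigma)>-s_j=-\sigma^2(2j+1)^2\pi^2/4$, which is exactly the left-hand inequality.

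For the upper bound I would separate $j=0$ from $j\geq1$. For $j=0$ the right-hand side is $-\tfrac{3}{10}\tfrac{\pi^2}{4}\sigma^2((2\cdot0+1)^2-1)=0$, and the claim is just $\mu_0(\sigma)=\mu(\sigma^2\nu_0)<0$, already noted above. For $j\geq1$ I would apply the mean value theorem to $\mu\in C^1$ on the interval $[\sigma^2\nu_0,\,s_j]$: there is $\xi\in(\sigma^2\nu_0,s_j)$ with $\mu(s_j)=\mu(\sigma^2\nu_0)+\mu'(\xi)\,(s_j-\sigma^2\nu_0)$. Since $\mu'$ is decreasing and $\xi>0$, we have $\mu'(\xi)\leq\mu'(0)<-\tfrac{3}{10}$; since $\sigma^2\nu_0>s_0$ we have $\mu(\sigma^2\nu_0)<0$; and $s_j-\sigma^2\nu_0=\sigma^2\tfrac{\pi^2}{4}\big((2j+1)^2-1\big)>0$. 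Combining these three facts gives $\mu_{2j}(\sigma)<-\tfrac{3}{10}\,\sigma^2\tfrac{\pi^2}{4}\big((2j+1)^2-1\big)$, as claimed.

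I do not expect a genuine obstacle here: the argument is essentially bookkeeping once the concavity and the boundary values of $\mu$ at $s=0$ are available. The only points requiring a little care are that the estimates must come out strict and uniform in $j$ — in the upper bound the strictness for $j\geq1$ comes from combining $\mu'(0)<-\tfrac{3}{10}$ (strict) with $\mu(\sigma^2\nu_0)<0$ (strict, via $\sigma^2\nu_0>s_0$) — and that the $j=0$ case of the upper bound has to be treated on its own, since there the increment $s_j-\sigma^2\nu_0$ degenerates to zero and the mean value argument produces no information.
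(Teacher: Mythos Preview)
Your argument is correct and follows essentially the same route as the paper. The only cosmetic difference is that for the upper bound the paper integrates $\mu'$ from the zero $s_0$ to $\sigma^2\nu_{2j}$ (using $s_0<\sigma^2\nu_0$ at the end), which handles all $j\in\mathbb{N}$ at once, whereas you apply the mean value theorem from $\sigma^2\nu_0$ and treat $j=0$ separately; the underlying inputs---monotonicity of $\mu'$, the value $\mu'(0)<-\tfrac{3}{10}$, and $\sigma^2\nu_0>s_0$---are identical.
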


\begin{proof} {\bf(i)} {\it We derive the lower bound}: For $s \in [0,\infty)$, we have 
\begin{align*}
\mu(s) = -s +3+2 \partial_r h_s(1) 
\end{align*}
with $h_s$ being the solution to \eqref{eqCyl13a}. Because $[s \mapsto \partial_r h_s(1)]$ is an increasing function by Remark \ref{cylincreasing} and $\partial_r h_0(1)= -2 +1/\ln(2)$, see \eqref{eqCyl14aa}, we deduce that
\begin{align*}
\mu(s) \geq -s +3 +2 \partial_r h_0(1) = -s + \frac{2}{\ln(2)} -1 > -s \,.
\end{align*}
Inserting $s=\sigma^2 \nu_{2j}$ results in the estimate from below. \\
{\bf(ii)} {\it We derive the upper bound}: Since $\sigma > \sigma_{cyl}$, we find $\sigma^2 \nu_{2j} > s_0$ with $s_0$ being the unique zero of the eigencurve profile $[s \mapsto \mu(s)]$. Because $[s \mapsto \mu^\prime(s)]$ is decreasing on $[0,\infty)$, see the proof of Proposition \ref{Cyl4}, with $\mu^{\prime}(0)< -3/10$ by Lemma \ref{nu0andnu0prime}, it follows that 
\begin{align*}
\mu_{2j}(\sigma)=\mu(\sigma^2 \nu_{2j}) = \int_{s_0}^{\sigma^2 \nu_{2j}} \mu^\prime(\tilde{s}) \,\mathrm{d} \tilde{s} \leq \frac{-3}{10} (\sigma^2 \nu_{2j}-s_0) \leq \frac{-3}{10} \sigma^2 (\nu_{2j}-\nu_0) \,.
\end{align*}
Inserting the expressions for $\nu_{2j}$ and $\nu_0$ concludes the proof.
\end{proof}

Now, we check the sign of $C_1$ in \eqref{fseriespos}. 
\begin{prop}\label{cylpositive}
For $\sigma > \sigma_{cyl}$, equation \eqref{fseriespos} holds with $C_1>0$. In particular, $\partial_\lambda u^{\lambda_{cyl}}_{cyl}(z) > 0$ for each $z \in (-1,1)$, and $\partial_z[\partial_\lambda u^{\lambda_{cyl}}_{cyl}](-1)>0$ as well as $\partial_z[\partial_\lambda u^{\lambda_{cyl}}_{cyl}](1)<0$.
\end{prop}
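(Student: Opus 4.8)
The plan is to verify the sufficient condition $C_1>0$ singled out just before the statement, so that the proposition follows by feeding the resulting lower bound \eqref{fseriespos} into elementary arguments. Recall that after inserting the coefficients $a_j$ from Lemma~\ref{directioncyl3a} one has
\begin{align*}
C_1=\frac{1}{-\mu_0(\sigma)}-\sum_{j=1}^\infty\frac{1}{-\mu_{2j}(\sigma)}\,,
\end{align*}
and all denominators are positive: since $\sigma>\sigma_{cyl}$ we have $\sigma^2\nu_{2j}\ge\sigma^2\nu_0>s_0$, and $[s\mapsto\mu(s)]$ is strictly decreasing by Proposition~\ref{Cyl4}, so $\mu_{2j}(\sigma)=\mu(\sigma^2\nu_{2j})<0$. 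It therefore suffices to bound the leading term from below and the tail from above, for which I would use the two-sided estimate of Lemma~\ref{directioncyl3}.

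For the leading term, the lower bound of Lemma~\ref{directioncyl3} with $j=0$ gives $-\mu_0(\sigma)<\sigma^2\nu_0=\pi^2\sigma^2/4$, hence $1/(-\mu_0(\sigma))>4/(\pi^2\sigma^2)$. For the tail, the upper bound of Lemma~\ref{directioncyl3} reads $-\mu_{2j}(\sigma)>\tfrac{3}{10}\tfrac{\pi^2}{4}\sigma^2\big((2j+1)^2-1\big)$; using $(2j+1)^2-1=4j(j+1)$ this becomes $-\mu_{2j}(\sigma)>\tfrac{3}{10}\pi^2\sigma^2\,j(j+1)$, which in particular shows that the series defining $C_1$ converges. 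The telescoping identity $\sum_{j\ge1}1/(j(j+1))=1$ then yields $\sum_{j\ge1}1/(-\mu_{2j}(\sigma))<10/(3\pi^2\sigma^2)$, so that
\begin{align*}
C_1>\frac{4}{\pi^2\sigma^2}-\frac{10}{3\pi^2\sigma^2}=\frac{2}{3\pi^2\sigma^2}>0\,.
\end{align*}

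With $C_1>0$ established, Lemma~\ref{directioncyl2} applied to the Fourier series \eqref{fseries} of Lemma~\ref{directioncyl3a} yields \eqref{fseriespos}, and since $\cos(\pi z/2)>0$ for $z\in(-1,1)$ this immediately gives $\partial_\lambda u^{\lambda_{cyl}}_{cyl}(z)>0$ on $(-1,1)$. For the boundary derivatives I would introduce $w:=\partial_\lambda u^{\lambda_{cyl}}_{cyl}-C_1\tfrac{4}{\pi\ln(2)^2}\cos(\pi\cdot/2)\in C^1([-1,1])$, which is nonnegative by \eqref{fseriespos} and vanishes at $z=\pm1$ (both summands do); hence $w'(-1)\ge0$ and $w'(1)\le0$. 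Since $\tfrac{d}{dz}\cos(\pi z/2)$ equals $\pi/2$ at $z=-1$ and $-\pi/2$ at $z=1$, this gives $\partial_z[\partial_\lambda u^{\lambda_{cyl}}_{cyl}](-1)\ge C_1\tfrac{2}{\ln(2)^2}>0$ and $\partial_z[\partial_\lambda u^{\lambda_{cyl}}_{cyl}](1)\le-C_1\tfrac{2}{\ln(2)^2}<0$.

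The only genuinely delicate point is quantitative rather than structural: the argument closes precisely because the constant $3/10$ coming from $\mu'(0)<-3/10$ in Lemma~\ref{nu0andnu0prime}, combined with the telescoping sum, leaves the comparison $4>10/3$ in our favour, whereas a cruder bound on $\mu'(0)$ or on $\mu_0(\sigma)$ would fail to dominate the leading term. Since those sharp bounds are already in hand via Lemma~\ref{directioncyl3}, nothing beyond the bookkeeping above is needed.
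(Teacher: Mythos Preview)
Your proof is correct and follows essentially the same route as the paper: you invoke both halves of Lemma~\ref{directioncyl3} to bound the leading term and the tail of $C_1$, use the telescoping identity $\sum_{j\ge1}1/((2j+1)^2-1)=\sum_{j\ge1}1/(4j(j+1))=1/4$ to obtain $C_1>2/(3\pi^2\sigma^2)$, and then appeal to Lemma~\ref{directioncyl2}. Your explicit argument for the signs of the boundary derivatives via the auxiliary function $w$ is a slight elaboration over the paper's terse ``an application of Lemma~\ref{directioncyl2} concludes the proof'', but the content is the same.
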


\begin{proof}
Due to Lemma \ref{directioncyl3}, we have 
$$0<\frac{1}{\big(-\mu_{2j}(\sigma)\big)} \leq \frac{10}{3}\,\frac{4}{\pi^2 \sigma^2} \frac{1}{(2j+1)^2-1} \,, \qquad j \in \mathbb{N} \,,$$
where the infinite sum over the right-hand side converges. Consequently, the constant $C_1$ is finite. Moreover, a second application of Lemma \ref{directioncyl3} ensures that
\begin{align*}
C_1 &=\frac{1}{(-\mu_0(\sigma))} - \sum_{j=1}^\infty \frac{1}{(-\mu_{2j}(\sigma))}
\\
&\geq \frac{4}{\pi^2\sigma^2} \left (  1 -\frac{10}{3} \sum_{j=1}^\infty \frac{1}{\big( (2j+1)^2-1 \big)}                 \right ) \\
&= \frac{4}{\pi^2\sigma^2} \left ( 1- \frac{10}{3} \cdot \frac{1}{4} \right ) =\frac{2}{3\pi^2\sigma^2}>0\,,
\end{align*}
where
\begin{align*}
\sum_{j=1}^\infty \frac{1}{\big( (2j+1)^2-1 \big)} = \lim\limits_{n \rightarrow \infty} \frac{1}{4} \cdot \frac{n}{n+1} = \frac{1}{4}\,.
\end{align*}
Now, an application of Lemma \ref{directioncyl2} concludes the proof.
\end{proof}

{\bf Proof of Theorem \ref{cyldirectionofdeflection}.}
As outlined in \eqref{cylab0}-\eqref{cylabl}, we rely on the linear approximation of $[\lambda \mapsto u^\lambda_{cyl}]$ around $\lambda_{cyl}$, and therefore the properties of the first order term $\partial_{\lambda} u^{\lambda_{cyl}}_{cyl}$ are important: From Proposition \ref{cylpositive}, it follows that  $\partial_\lambda u^{\lambda_{cyl}}_{cyl}$ is positive and that $\partial_z[\partial_\lambda u_{cyl}^{\lambda_{cyl}} ](1)< 0$ as well as $\partial_z[\partial_\lambda u_{cyl}^{\lambda_{cyl}}](-1)>0$. Thanks to the embedding of $W^2_{q,D}(-1,1)$ in $C^1([-1,1])$\,, we find $\varepsilon > 0$ such that 
\begin{align}
\partial_z[\partial_\lambda u_{cyl}^{\lambda_{cyl}} ] (z) &\leq -4 \varepsilon \,, \qquad z \in (1-\varepsilon,1]\,, \notag \\
\partial_z[\partial_\lambda u_{cyl}^{\lambda_{cyl}} ] (z) &\geq\, 4 \varepsilon \,,\ \  \qquad z \in [-1,-1+\varepsilon)\,. \label{Auslenkung6}
\end{align}
Furthermore, since $\partial_\lambda u_{cyl}^{\lambda_{cyl}}$ is continuous and strictly positive on $[-1+\varepsilon,1-\varepsilon]$, we find $\tilde{\varepsilon} > 0$ such that
\begin{align}
\partial_\lambda u^{\lambda_{cyl}}_{cyl}(z) \geq 4 \tilde{\varepsilon} \,, \qquad z \in [-1+\varepsilon,1-\varepsilon]\,. \label{Auslenkung7}
\end{align}
Finally, the continuity of $\big [(z,\lambda) \rightarrow \partial_\lambda u^\lambda_{cyl}(z)\big ]$ and $\big [(z,\lambda) \rightarrow \partial_z[\partial_\lambda u^\lambda_{cyl}](z)\big ]$ allows us to extend \eqref{Auslenkung6} and \eqref{Auslenkung7} to 
\begin{align}
\partial_z[\partial_\lambda u_{cyl}^\lambda ] (z) &\leq - 2\varepsilon \,, \qquad z \in (1-\varepsilon,1] \,, \qquad \   \vert\lambda -\lambda_{cyl}\vert \leq \delta\,,\notag \\
\partial_z[\partial_\lambda u_{cyl}^\lambda ] (z) &\geq\,  2\varepsilon \,,\ \  \qquad z \in [-1,-1+\varepsilon)\,, \ \ \, \vert\lambda -\lambda_{cyl}\vert \leq \delta\,, \label{Auslenkung8}
\end{align}
and 
\begin{align}
\partial_\lambda u^\lambda_{cyl}(z) \geq  2\tilde{\varepsilon} \,, \qquad z \in [-1+\varepsilon,1-\varepsilon]\,, \qquad  \vert\lambda -\lambda_{cyl}\vert \leq \delta\,, \label{Auslenkung9}
\end{align}
for suitably chosen $\delta>0$. Let us now turn to the linear approximation
\begin{align}
u^\lambda_{cyl} = u^{\overline{\lambda}}_{cyl} + \partial_\lambda u_{cyl}^{\overline{\lambda}}\, (\lambda-\overline{\lambda}) + R(\lambda,\overline{\lambda}) \label{tail}
\end{align}
in $W^2_{q,D}(-1,1) \hookrightarrow C^1\big ([-1,1]\big)$ with error term 
\begin{align*}
R(\lambda, \overline{\lambda} ):= \int_0^1 (1-t) \, \partial_\lambda^2\, u^{\overline{\lambda} +t(\lambda-\overline{\lambda})}_{cyl}  \,\mathrm{d}t \,(\lambda-\overline{\lambda})^2
\end{align*}
satisfying the uniform estimate
\begin{align*}
\frac{\ \ \Vert R(\lambda,\overline{\lambda})\Vert_{C^1}}{\vert \lambda-\overline{\lambda} \vert} \leq C \,\vert \lambda - \overline{\lambda} \vert 
\end{align*}
for some $C > 0$ independent of $\lambda\,, \, \overline{\lambda} \in [\lambda_{cyl}-\delta,\lambda_{cyl}+ \delta]$. As a consequence, we can adjust $\delta>0$ such that 
\begin{align}
\frac{\ \ \Vert R(\lambda,\overline{\lambda})\Vert_{C^1}}{\vert \lambda-\overline{\lambda} \vert} \leq \mathrm{min} \big \lbrace \varepsilon, \tilde{\varepsilon} \big \rbrace \,, \qquad 0 < \lambda- \overline{\lambda}  \leq \delta\,, \quad \vert \lambda -\lambda_{cyl}\vert \leq \delta \,. \label{Auslenkung10}
\end{align}
From \eqref{Auslenkung9}-\eqref{Auslenkung10}, it follows that 
\begin{align*}
\frac{u^\lambda_{cyl}(z)-u_{cyl}^{\overline{\lambda}}(z)}{\lambda -\overline{\lambda}}\geq \tilde{\varepsilon}\,, \qquad z \in [-1+\varepsilon, 1-\varepsilon]\,, 
\end{align*}
while \eqref{Auslenkung8} - \eqref{Auslenkung10} yield
\begin{align*}
\frac{\partial_z u^\lambda_{cyl}(z)-\partial_z u_{cyl}^{\overline{\lambda}}(z)}{\lambda -\overline{\lambda}}\geq \varepsilon  \,, \qquad z \in [-1,-1+\varepsilon)\,,
\end{align*}
as well as
\begin{align*}
\frac{\partial_z u^\lambda_{cyl}(z)-\partial_z u_{cyl}^{\overline{\lambda}}(z)}{\lambda -\overline{\lambda}}\leq -\varepsilon  \,, \qquad z \in (1-\varepsilon,1]\,.
\end{align*}
Here, all three estimates above hold for $0 < \lambda- \overline{\lambda}  \leq \delta$ and $\vert\lambda - \lambda_{cyl} \vert \leq \delta$. From these estimates and the fact that $$u^\lambda_{cyl}(\pm 1) = u^{\overline{\lambda}}_{cyl}(\pm 1)=0\,,$$ we deduce 
\begin{align}
u^\lambda_{cyl}(z) > u^{\overline{\lambda}}_{cyl}(z) \,, \qquad z \in (-1,1) \,, \quad 0 < \lambda- \overline{\lambda}  \leq \delta\,, \quad \vert \lambda-\lambda_{cyl} \vert \leq \delta\,, \label{Auslenkung11}
\end{align}
from which the statement follows. 
\qed

\end{subsection}
\end{section}

\begin{section}{Discussion on Parameter Relations}\label{Disc}
Finally, we compare the values for $\sigma_{crit}$ and $\sigma_{cyl}$. The outcome points towards a balancing effect of the electrostatic force confirming observations from the simplified model in \cite{Moulton08}. Recall that, in general, $\sigma$ gives the ratio between the radius of the metal rings between which the soap film is spanned divided by half of their distance. The first characteristic value of $\sigma$ is the critical ratio $\sigma_{crit} \approx 1.5$ from \eqref{sigmacrit} below which no stationary solutions to \eqref{eq1}-\eqref{eq3} with $\lambda = 0$ exist. From \cite{DK91} and the parabolic comparison principle, it even follows that for $\sigma < \sigma_{crit}$ there exists no global solution for the film's dynamics at all. The second characteristic value of $\sigma$ is $\sigma_{cyl}$ from \eqref{thv} at which the stability of the cylinder and also that of $u^\lambda_{cyl}$ switches. The value is given approximately by
\begin{align*}
 \sigma_{cyl} = \frac{2 \sqrt{s_0}}{\pi} \leq \frac{2 \,\sqrt{4.2}}{\pi} \approx 1.3 \,.
 \end{align*}
Herein, the inequality is formally justified by a numerical plot, see Figure \ref{bildeigprofile}, which shows that the unique zero $s_0$ of the eigencurve profile $[s \mapsto \mu(s)]$ satisfies $s_0 \leq 4.2$. In summary, we have $\sigma_{cyl} < \sigma_{crit}$. Hence, for $\sigma \in (\sigma_{cyl}, \sigma_{crit})$ and
suitably scaled electrostatic force, we observe stability of the cylinder by Theorem \ref{stability2} -- in particular, many solutions to the dynamical version of \eqref{eq1}-\eqref{eq3},see \eqref{dyn}, with $\lambda$ close to $\lambda_{cyl}$ do exist globally in time --
while in absence of the electrostatic force, i.e. for $\lambda=0$ in the dynamical version of \eqref{eq1}-\eqref{eq3}, see \eqref{dyn}, all solutions cease to exist after a finite time.
In other words, this is a first indication that the electrostatic force might be used to prevent the soap film bridge from pinching off. 
\begin{figure}[!h]
\includegraphics[width=90mm]{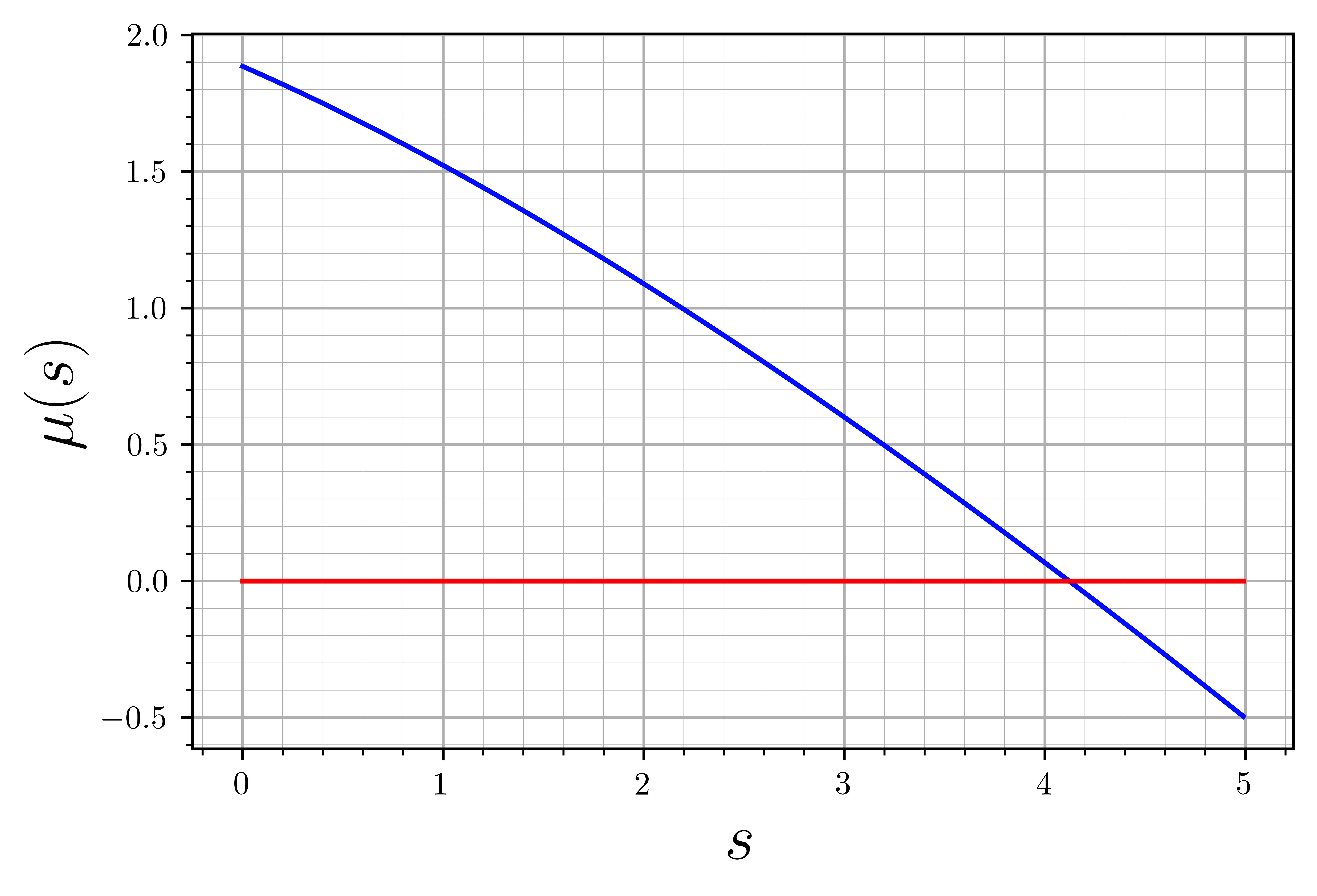}
\caption[Numerical Plot of Eigencurve Profile]{Numerical plot of the eigencurve profile $[s \mapsto \mu(s)]$ (blue) together with the constant $[s \mapsto 0]$ (red), created with Python routine scipy.integrate.solve\_bvp. 
\label{bildeigprofile}}
\end{figure}

\end{section}

\section*{Acknowledgement} This paper contains results and edited text from my PhD-thesis. I am very thankful to my PhD-supervisor Christoph Walker for his support.

\appendix
\section{}\label{Appendices}

\subsection{Computation of the Linearization}\label{LinearizationApp}

 Here, we give the precise form of the transformed $v$-dependent differential operator $L_v$, which is
 \begin{align}
  L_v w &:= \sigma^2 (1-v) \partial_z^2 w - 2 \sigma^2 \, \partial_z v \,  ( 2-r  ) \partial_r\partial_z w \nonumber \\
  &\quad\ \ +\frac{1+\sigma^2 (\partial_z v)^2  ( 2-r  )^2}{1-v}\, \partial_r^2 w \nonumber \\
  &\quad\ \  + \left [ - \sigma^2  ( 2 - r  ) \Big (  \partial_z^2 v +  \frac{2 (\partial_z v)^2 }{1-v} \Big ) + \frac{1}{2v +(1-v)r } \right ] \partial_r w\,. \label{Lvnondivergence}
 \end{align}
 In divergence form, this operator reads
 \begin{align}
L_v w = \mathrm{div} \left (A(v) \nabla w \right ) + d(v)\cdot \nabla w   \label{Lvdivergence}
 \end{align}
 with
 \begin{align*}
  A(v) = [a_{ij}(v)]_{i,j=1}^2 &:= \begin{pmatrix}
          \sigma^2 (1-v)  & - \sigma^2  \,\partial_z v\, (2-r ) \\
         - \sigma^2 \,\partial_z v \, (2-r )  & \displaystyle\frac{1+\sigma^2 (\partial_z v)^2(2-r)^2}{1-v}
         \end{pmatrix}
\,, \\
d(v) = \begin{pmatrix} d_1(v) \\ d_2(v) \end{pmatrix}&:=\begin{pmatrix}  0\\ \displaystyle\frac{1}{2v+(1-v)r}\end{pmatrix} \,,
 \end{align*}
 and by  \cite[Lemma 3.1]{LSS24a} the operator $-L_v$ is again uniformly elliptic for each $v \in S$ (see \eqref{defS}). \\
\color{black}

{\bf Proof of Lemma \ref{Cyl3a}.}
We have to show that the linearization of $F(u)+\lambda_{cyl} g(u)$ around $u=0$ is given by \eqref{lo}, i.e. by
\begin{align*}
\big (DF(0)+\lambda_{cyl} Dg(0) \big ) v=\sigma^2 \partial_z^2 v +3v + 2 \, \partial_r  (-\Delta_{cyl,D}  )^{-1} \Big [- \frac{2}{r^3} v - \sigma^2\, \frac{2-r}{r}\, v_{zz} \Big ](\, \cdot \,,1) 
\end{align*} 
for $v \in W^2_{q,D}(-1,1)$. Here, we recall that $\lambda_{cyl}= \mathrm{ln}(2)^2$ and refer to \eqref{CatFa} and \eqref{filmdimensionless0} \color{black} for the definitions of $F$ and $g$.\\

{\bf(i)} {\it For $DF(0)$:} Using the relation $$\sigma \partial_z \mathrm{arctan}(\sigma \partial_z u)= \displaystyle\frac{\sigma^2 \partial_z^2 u}{1+\sigma^2(\partial_z u)^2}\,,$$
we compute
\begin{align*}
 DF(u)v  = \frac{\sigma^2 }{(1+\sigma^2u_z^2)} \, v_{zz} &- \frac{2 \sigma^4 u_{z}u_{zz}}{(1+\sigma^2 u_z^2)^{2}}\,v_z + \frac{1}{(u+1)^{2}}\,v ,
\end{align*}
which, evaluated at $u=0$, yields
\begin{align}
DF(0)v = \sigma^2 \partial_z^2 v +v\,.  \label{DF(0)}
\end{align}
{\bf(ii)} {\it For $\lambda_{cyl}Dg(0)$}: It remains to show that the linearization of $g$ around $0$ is given by
\begin{align}
\lambda_{cyl}\, Dg(0)v= 2 v  + 2 \, \partial_r  (-\Delta_{cyl,D}  )^{-1} \Big [- \frac{2}{r^3} v - \sigma^2\, \frac{2-r}{r}\, v_{zz} \Big ](\, \cdot \,,1) \label{Dg(0)}
\end{align}
for $v \in W^2_{q,D}(-1,1)$:

 \color{black}

Using the definition of $g$ from \eqref{filmdimensionless0}, \eqref{eqCyl1b} and the relation $\psi_u=\phi_u\circ T_u$ with $T_u$ defined in \eqref{deftrafoT}, we compute
\begin{align}
Dg(0)v&= D \Big [ (1+\sigma^2 u_z^2)^{3/2}\,\big \vert\partial_r \psi_{u} (z,u+1) \big \vert^2 \Big ] \,\Big\vert_{u=0} v \nonumber \\
&= 2 \,\partial_r \psi_0(z,1)\, D \big [ \partial_r\psi_{u}(z,u+1)\big ]\,\Big \vert_{u=0}v \nonumber \\
&= \frac{2}{\mathrm{ln}(2)}\, D \bigg [ \frac{\partial_r\phi_{u}(z,1)}{1-u} \bigg ]\,\bigg\vert_{u=0}v \nonumber \\
&=\frac{2}{\mathrm{ln}(2)}\, \Big ( \partial_r \phi_0(z,1)\, v + D \big [ \partial_r \phi_{u}(z,1) \big ] \Big \vert_{u=0} v \Big )\,. \label{eqCyl8}
\end{align}
At this point, we recall from \eqref{defphiv} that $\phi_{u}$, the transformation of $\psi_{u}$ to the fixed domain $\Omega$, is given by 
\begin{align*}
\phi_{u} = -L_D(u)^{-1} L_{u} \Big ( \frac{\mathrm{ln}(r)}{\mathrm{ln}(2)} \Big ) + \frac{\mathrm{ln}(r)}{\mathrm{ln}(2)} 
\end{align*}
with $L_{u}$ and $L_{D}(u)$ defined in \eqref{Lvnondivergence} and \eqref{defLDv} respectively. In particular,
\begin{align}
L_0 \Big ( \frac{\ln(r)}{\ln(2)} \Big )=\frac{1}{\ln(2)} \Big (\frac{1}{r} \partial_r \big ( r \partial_r \ln(r) \big ) + \sigma^2 \partial_z^2 \ln(r)\Big)=0\,, \label{eqCyl9}
\end{align}
and hence 
\begin{align*}
\phi_0 =\frac{\ln(r)}{\ln(2)}
\end{align*}
as well as
\begin{align}
   \partial_r \phi_0(z,1)= \frac{1}{\ln(2)} \,.  \label{eqCyl10}
\end{align}
Moreover,
\begin{align*}
L_{u} \Big ( \frac{\ln(r)}{\ln(2)} \Big )= &\frac{1+\sigma^2 u_z^2(2-r)^2}{1-u} \,\partial_r^2 \Big (\frac{\ln(r)}{\ln(2)} \Big )\\
&+ \Big ( - \sigma^2(2-r) u_{zz} - 2 \sigma^2 \frac{2-r}{1-u} \, u_z^2 + \frac{1}{2u+(1-u)r}\Big ) \, \partial_r \Big (\frac{\ln(r)}{\ln(2)}\Big) \\
=& \frac{1}{\ln(2)}\, \bigg [- \frac{1+\sigma^2u_z^2(2-r)^2}{(1-u)} \frac{1}{r^2}\\
&\qquad \quad +\Big ( - \sigma^2(2-r) u_{zz} - 2 \sigma^2 \frac{2-r}{1-u} \, u_z^2 + \frac{1}{2u+(1-u)r}\Big ) \, \frac{1}{r}\, \bigg ]
\end{align*}
implies that
\begin{align}
D L_{u} \Big ( \frac{\ln(r)}{\ln(2)} \Big ) \Big \vert_{u=0} v &= \frac{1}{\ln(2)} \bigg [ - \frac{1}{r^2}\,v  + \Big (- \frac{1}{r^2} \,(2-r)\,v - \sigma^2(2-r)v_{zz} \Big ) \,\frac{1}{r}  \, \bigg ]          \nonumber     \\
&= \frac{1}{\ln(2)} \, \Big [  - \frac{2}{r^3} \, v - \sigma^2 \frac{2-r}{r} \, v_{zz} \,\Big ] \,. \label{eqCyl11}
\end{align}
Combining \eqref{eqCyl9} and \eqref{eqCyl11} results in
\begin{align}
D \phi_{u} \big \vert_{u=0}\, v &= D \Big [ -L_D(u)^{-1} L_{u} \Big ( \frac{\ln(r)}{\ln(2)} \Big ) \Big ] \,\Big \vert_{u=0} v \nonumber \\
&=\frac{1}{\ln(2)} \, \big( -L_D(0) \big)^{-1} \,\Big [- \frac{2}{r^3} \, v - \sigma^2 \frac{2-r}{r} \, v_{zz} \,\Big ] \nonumber \\ 
&=\frac{1}{\ln(2)} \,(-\Delta_{cyl,D})^{-1} \,\Big [- \frac{2}{r^3} \, v - \sigma^2 \frac{2-r}{r} \, v_{zz} \,\Big ]\,. \label{eqCyl12}
\end{align}
For the last line, we used $\big(-L_D(0)\big)^{-1} = (-\Delta_{cyl,D})^{-1}$. Because the chain rule yields
$$D \big [\partial_r \phi_u( \, \cdot \,,1) \big ] \Big \vert_{u=0} v = \partial_r \big ( D \phi_u \big \vert_{u=0} v \big )(\,\cdot \,,1)\,,$$ it follows from \eqref{eqCyl8}, \eqref{eqCyl10} and \eqref{eqCyl12} that 
\begin{align*}
Dg(0)v = \frac{2}{\ln(2)^2} \bigg [\, v +\partial_r (-\Delta_{cyl,D})^{-1}\,\Big [- \frac{2}{r^3} \, v - \sigma^2 \frac{2-r}{r} \, v_{zz} \,\Big ]( \, \cdot \, ,1)\bigg ]\,.
\end{align*}
Since $\lambda_{cyl} = \ln(2)^2$ by \eqref{eqCyl2}, step (ii) and hence the computation of $DF(0)+\lambda_{cyl} Dg(0)$ is completed. \qed \\
\color{black}
 
\subsection{Odd Cosine Sums}\label{OddCosineSums}

We present a sufficient condition for an odd cosine sum to be positive on $(-1,1)$. In the following, a cosine sum is odd if each cosine is scaled by an odd multiple of $\pi/2$. \\

As preparation, we reduce odd cosine sums to even ones by applying trigonometric identities, see \cite[Lemma 5]{Boyd07}. 

\begin{lem}\label{directioncyl1}
Let $n \in \mathbb{N}$ and $a_0, \dots, a_n \in \mathbb{R}$. Consider the sum of odd cosines 
\begin{align*}
f(z):= \sum_{j=0}^n a_j \cos \bigg ( \frac{(2j+1)\pi}{2}\, z \bigg) \,, \qquad z \in (-1,1) \,.
\end{align*}
Then \vspace{-2mm}
\begin{align*}
\frac{f(z)}{\cos  ( \frac{\pi}{2} z )} = \sum_{j=0}^n b_j \cos  ( j\pi z)\,, \qquad z\in(-1,1)\,,
\end{align*}
\vspace{-1mm}
with coefficients 
\begin{align*}
&b_0:= \sum_{k=0}^n (-1)^k a_k \,, \qquad b_j := 2 \sum_{k=j}^n (-1)^{k-j} a_k\,, \qquad j =1, \dots ,n\,.
\end{align*}
\end{lem}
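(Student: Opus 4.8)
The plan is to prove the identity by a direct manipulation of the cosine sum, using the product-to-sum formula for cosines. First I would write, for each $j \in \{0,\dots,n\}$,
\[
\cos\!\Big(\frac{(2j+1)\pi}{2}z\Big) = \cos\!\Big(\frac{\pi}{2}z + j\pi z\Big) = \cos\!\Big(\frac{\pi}{2}z\Big)\cos(j\pi z) - \sin\!\Big(\frac{\pi}{2}z\Big)\sin(j\pi z),
\]
but this mixed form is not yet what we want; the cleaner route is to use the identity
\[
\cos\!\Big(\frac{(2j+1)\pi}{2}z\Big) = 2\cos\!\Big(\frac{\pi}{2}z\Big)\cos(j\pi z) - \cos\!\Big(\frac{(2j-1)\pi}{2}z\Big),
\]
which is the standard three-term recurrence $\cos((k+1)\theta) = 2\cos\theta\cos(k\theta) - \cos((k-1)\theta)$ applied with $\theta = \tfrac{\pi}{2}z$ and $k = j$ (noting $(2j+1) = 2(j) + 1$, so the angles in units of $\theta$ are $2j+1$, $2j$ needs care — more precisely I apply it with step $\pi z = 2\theta$). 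So I would carry out the telescoping: substitute this recurrence into $f(z) = \sum_{j=0}^n a_j \cos(\tfrac{(2j+1)\pi}{2}z)$, collect the terms $2\cos(\tfrac{\pi}{2}z)\cos(j\pi z)$, and handle the boundary/overlap terms from the telescoping shift. Dividing through by $\cos(\tfrac{\pi}{2}z)$ (legitimate on $(-1,1)$ since this factor is nonzero there) then yields an expression of the form $\sum_{j=0}^n b_j \cos(j\pi z)$.

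Concretely, the cleanest bookkeeping is to define $g(z) := f(z)/\cos(\tfrac{\pi}{2}z)$ and prove $g(z) = \sum_{j=0}^n b_j\cos(j\pi z)$ by induction on $n$, or equivalently by rearranging the double sum. The inductive step: assuming the claim for $n-1$, write $f_n(z) = f_{n-1}(z) + a_n\cos(\tfrac{(2n+1)\pi}{2}z)$, and expand $\cos(\tfrac{(2n+1)\pi}{2}z)/\cos(\tfrac{\pi}{2}z)$ as a sum $\sum_{j=0}^n c_j^{(n)}\cos(j\pi z)$ with $c_n^{(n)} = 2$, $c_j^{(n)} = 2(-1)^{n-j}$ for $0 < j < n$ actually $= \pm 2$ with alternating sign, and $c_0^{(n)} = (-1)^n$; this single-term identity is itself a clean consequence of the Dirichlet-kernel-type identity $\frac{\cos((2m+1)x)}{\cos x} = \dots$ which one verifies by multiplying both sides by $\cos x$ and using product-to-sum. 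Then one simply adds $a_n$ times these coefficients to the coefficients $b_j$ coming from $f_{n-1}$ and checks the sums match the claimed formulas $b_0 = \sum_{k=0}^n(-1)^k a_k$ and $b_j = 2\sum_{k=j}^n(-1)^{k-j}a_k$ — which is immediate since adding the $k=n$ term to $\sum_{k=0}^{n-1}(-1)^k a_k$ gives $\sum_{k=0}^n(-1)^k a_k$, and similarly for the factor-$2$ sums.

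The main obstacle, such as it is, is purely organizational rather than deep: keeping the alternating signs and the telescoping endpoints straight, and in particular correctly handling the $j=0$ coefficient (which picks up a factor $1$ rather than $2$, because $\cos(0\cdot\pi z) = 1$ appears only ``once'' in the collapse whereas $\cos(j\pi z)$ for $j\ge 1$ is symmetric). I would double-check the formula on the smallest cases — $n=0$ gives $f(z)/\cos(\tfrac{\pi}{2}z) = a_0 = b_0$, and $n=1$ gives, using $\cos(\tfrac{3\pi}{2}z) = \cos(\tfrac{\pi}{2}z)(2\cos(\pi z) - 1)$, the expression $a_0 + a_1(2\cos(\pi z)-1) = (a_0 - a_1) + 2a_1\cos(\pi z)$, so $b_0 = a_0 - a_1$ and $b_1 = 2a_1$, matching the claimed formulas. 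Everything else is routine trigonometric algebra, and I would present it compactly by referencing the single-term identity and then invoking linearity.
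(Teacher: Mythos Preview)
Your proposal is correct and is essentially the same approach as the paper's: both proceed by induction on $n$ using the product-to-sum identity $\cos\big(\tfrac{(2n+1)\pi}{2}z\big) + \cos\big(\tfrac{(2n-1)\pi}{2}z\big) = 2\cos(n\pi z)\cos\big(\tfrac{\pi}{2}z\big)$, and both verify the base cases directly. The only organizational difference is that the paper absorbs the top term into the $(n-1)$-st coefficient (replacing $a_{n-1}$ by $a_{n-1}-a_n$) and then invokes the induction hypothesis on the modified length-$n$ sum, whereas you split off $a_n\cos\big(\tfrac{(2n+1)\pi}{2}z\big)$ and invoke a separately established single-term expansion of $\cos\big(\tfrac{(2n+1)\pi}{2}z\big)/\cos\big(\tfrac{\pi}{2}z\big)$; this is a cosmetic repackaging of the same telescoping computation.
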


\begin{proof}
This follows by induction. For the step from $n-1$ to $n$, we apply the trigonometric identity 
\begin{align*}
\cos \bigg (\frac{(2n+1) \pi}{2} z \bigg ) + \cos \bigg (\frac{(2n-1) \pi}{2} z \bigg ) = 2 \cos(n\pi z) \cos \Big (\frac{\pi}{2} z \Big )
\end{align*}
to rewrite $f$ as 
\begin{align*}
f(z)&= \sum_{j=0}^{n-2} a_j \cos \bigg ( \frac{(2j+1)\pi}{2}\, z \bigg) + (a_{n-1}-a_n) \cos \bigg ( \frac{(2(n-1)+1)\pi}{2}\, z \bigg) \\
&\ \ \ +2a_n \cos(n\pi z) \cos  \Big (\frac{\pi}{2} z \Big ) \,.
\end{align*}
Consequently, the coefficients $b_0$ to $b_{n-1}$ of $\frac{f(z)}{\cos(\frac{\pi}{2} z)}$ are given by\footnote{For $b_{n-1}$, we use the convention $\sum_{k=n-1}^{n-2} (-1)^{k-(n-1)}a_k =0$.}
\begin{align*}
b_0 &= \sum_{k=0}^{n-2} (-1)^k a_k + (-1)^{n-1} (a_{n-1} -a_n) = \sum_{k=0}^{n} (-1)^k a_k \,,\\
b_j &= 2\sum_{k=j}^{n-2} (-1)^{k-j} a_k + 2(-1)^{n-1-j} (a_{n-1} -a_n) = 2\sum_{k=j}^{n} (-1)^{k-j} a_k \,, \qquad j=1, \dots ,n-1,\\
\end{align*}
while  \vspace{-8mm}
\begin{align*}\vspace{-7mm}
b_n = 2 a_n = 2 \sum_{k=n}^n (-1)^{k-n} a_k
\end{align*}\vspace{-3mm}
is also fulfilled.
\end{proof}

Now, the condition for positivity reads:

\begin{lem}\label{directioncyl2}
Let $n \in \mathbb{N}$ and $a_0 , \dots, a_{n} \in \mathbb{R}$. Moreover, let $C > 0$ with
\begin{align*}
 a_0 - \sum_{j=1}^{n} (2j+1) \,\vert a_j \vert \geq C >0\,.
\end{align*}
Then, the sum of odd cosines 
\begin{align*}
f(z):= \sum_{j=0}^{n}  a_j \cos \bigg ( \frac{(2j+1)\pi}{2}\, z \bigg) \,, \qquad z \in (-1,1) 
\end{align*}
satisfies
$$f(z) \geq  C \,\cos \Big (\frac{\pi}{2} z \Big )\,, \qquad z \in (-1,1)\,.$$
In particular, $f(z)>0$ for each $z \in (-1,1)$ and $f_z(\pm 1) \neq 0$.
\end{lem}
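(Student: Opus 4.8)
The plan is to reduce $f$ to the even cosine sum provided by Lemma~\ref{directioncyl1} and then bound that sum from below using only the triangle inequality and the trivial estimate $\cos \ge -1$. First I would apply Lemma~\ref{directioncyl1} to write, for $z \in (-1,1)$,
\[
\frac{f(z)}{\cos\!\big(\tfrac{\pi}{2}z\big)} = \sum_{j=0}^n b_j \cos(j\pi z), \qquad b_0 = \sum_{k=0}^n (-1)^k a_k, \quad b_j = 2\sum_{k=j}^n (-1)^{k-j} a_k \ \ (j \ge 1).
\]
Since $\cos(j\pi z) \ge -1$ for all $j$ and all $z$, the right-hand side is at least $b_0 - \sum_{j=1}^n |b_j|$ for every $z \in [-1,1]$, so it suffices to prove $b_0 - \sum_{j=1}^n |b_j| \ge C$.

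Second, I would estimate the two pieces separately. The constant term satisfies $b_0 \ge a_0 - \sum_{k=1}^n |a_k|$, while for $j \ge 1$ the triangle inequality gives $|b_j| \le 2\sum_{k=j}^n |a_k|$; summing over $j$ and interchanging the order of the (finite) double summation yields $\sum_{j=1}^n |b_j| \le 2\sum_{k=1}^n k\,|a_k|$. Combining these bounds,
\[
b_0 - \sum_{j=1}^n |b_j| \ \ge\ a_0 - \sum_{k=1}^n |a_k| - 2\sum_{k=1}^n k\,|a_k| \ =\ a_0 - \sum_{k=1}^n (2k+1)\,|a_k| \ \ge\ C ,
\]
which is precisely the hypothesis. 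Hence $f(z)/\cos(\tfrac{\pi}{2}z) \ge C$ on $(-1,1)$, and since $\cos(\tfrac{\pi}{2}z) > 0$ there, multiplying back gives $f(z) \ge C\cos(\tfrac{\pi}{2}z) > 0$, which is the main assertion.

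Finally, for $f_z(\pm1) \ne 0$ I would use the identity $f(z) = \cos(\tfrac{\pi}{2}z)\,g(z)$ with $g(z) := \sum_{j=0}^n b_j\cos(j\pi z)$, which holds on $(-1,1)$ by Lemma~\ref{directioncyl1} and extends to $[-1,1]$ by continuity of both sides; by the bound above, $g(\pm1) \ge b_0 - \sum_{j=1}^n |b_j| \ge C > 0$. Since $g$ is a trigonometric polynomial, differentiating gives $f_z(z) = -\tfrac{\pi}{2}\sin(\tfrac{\pi}{2}z)\,g(z) + \cos(\tfrac{\pi}{2}z)\,g'(z)$, whence $f_z(1) = -\tfrac{\pi}{2} g(1) < 0$ and $f_z(-1) = \tfrac{\pi}{2} g(-1) > 0$. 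There is no genuine obstacle here: the only point needing mild care is the interchange of summation in the bound for $\sum_{j=1}^n |b_j|$, and everything else is the triangle inequality together with $\cos \ge -1$; the real content of the argument is already packaged in Lemma~\ref{directioncyl1}.
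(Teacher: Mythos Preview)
Your proof is correct and follows essentially the same route as the paper: apply Lemma~\ref{directioncyl1}, bound the resulting even cosine sum by $b_0 - \sum_{j=1}^n |b_j|$, and then estimate this quantity via the triangle inequality and the interchange $\sum_{j=1}^n \sum_{k=j}^n |a_k| = \sum_{k=1}^n k\,|a_k|$. The only difference is that you spell out the argument for $f_z(\pm 1)\neq 0$ via the factorization $f = \cos(\tfrac{\pi}{2}z)\,g$, whereas the paper leaves this ``in particular'' claim implicit.
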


\begin{proof}
We prove the equivalent statement 
\begin{align*}
\frac{f(z)}{\cos(\frac{\pi}{2} z)} \geq C \,, \qquad z \in (-1,1).
\end{align*}
Lemma \ref{directioncyl1} yields
\begin{align*}
\frac{f(z)}{\cos(\frac{\pi}{2} z)} = \sum_{j=0}^{n} b_j \cos(j \pi z) 
\end{align*}
with coefficients
\begin{align*}
b_0= \sum_{k=0}^n  (-1)^k a_k  \,, \quad b_j = 2 \sum_{k=j}^n (-1)^{k-j} a_k \,, \qquad j =1, \dots ,n\,,
\end{align*}
so that
\begin{align*}
\frac{f(z)}{\cos(\frac{\pi}{2} z)}  &\geq b_0 - \sum_{j=1}^n \vert b_j \vert  \geq \sum_{k=0}^n (-1)^k a_k - 2 \sum_{j=1}^n  \sum_{k=j}^n \vert a_k \vert  \\
&\geq a_0 - \sum_{k=1}^n \vert a_k \vert - 2 \sum_{j=1}^n j\,\vert a_j \vert  = a_0 - \sum_{j=1}^n (2j+1) \vert a_j \vert \geq  C\,.
\end{align*}
\end{proof}

\bibliographystyle{siam}
\bibliography{BibliographyDoc}
\end{document}